\author{Ties Laarakker}
\address{}
\email{p.t.a.laarakker@uu.nl}
\title[The Kleiman-Piene conjecture and node polynomials]{The Kleiman-Piene conjecture and node polynomials for plane curves in $\PP^3$}
\subjclass[2010]{14N10 (primary), and 14C20 14N35 14N15 (secondary)}
\renewcommand{\O}{\mathcal{O}}
\newcommand{\C}{\mathcal{C}}
\newcommand{\F}{\mathcal{F}}
\renewcommand{\S}{\mathcal{S}}
\renewcommand{\L}{\mathcal{L}}
\newcommand{\LL}{\mathbb{L}}
\newcommand{\U}{\mathcal{U}}
\newcommand{\Z}{\mathcal{Z}}
\newcommand{\D}{\mathbf{D}}
\newcommand{\CC}{\mathbb{C}}
\newcommand{\PP}{\mathbb{P}}
\newcommand{\ZZ}{\mathbb{Z}}
\newcommand{\SM}{\mathrm{SM}}
\newcommand{\Gr}{\mathrm{Gr}}
\newcommand{\amp}{{\bf AMP}}
\newcommand{\dima}{{\bf DIM\textsubscript{KP}}}
\newcommand{\dimb}{{\bf DIM}}
\newcommand{\vir}{\mathrm{vir}}
\DeclareMathOperator{\proj}{Proj}
\DeclareMathOperator{\sym}{Sym}
\DeclareMathOperator{\hilb}{Hilb}
\newtheorem{result}{Theorem}
\newtheorem{theorem}{Theorem}[section]
\newtheorem{conjecture}[theorem]{Conjecture}
\newtheorem{corollary}[theorem]{Corollary}
\newtheorem{definition}[theorem]{Definition}
\newtheorem{lemma}[theorem]{Lemma}
\newtheorem{proposition}[theorem]{Proposition}
\theoremstyle{remark}
\newtheorem{notation}[theorem]{Notation}
\newtheorem{example}[theorem]{Example}
\newtheorem{problem}{Problem}
\newtheorem{remark}[theorem]{Remark}
\begin{document}
\renewcommand{\O}{\mathcal{O}}
\renewcommand{\L}{\mathcal{L}}

\begin{abstract}
For a relative effective divisor $\C$ on a smooth projective family of surfaces $q:\S\rightarrow B$, we consider the locus in $B$ over which the fibres of $\C$ are $\delta$-nodal curves. We prove a conjecture by Kleiman and Piene on the univerality of an enumerating cycle on this locus. We propose a bivariant class $\gamma(\C)\in A^*(B)$ motivated by the BPS calculus of Pandharipande and Thomas, and show that it can be expressed universally as a polynomial in classes of the form $q_*(c_1(\O(\C))^a c_1(T_{\S/B})^b c_2(T_{\S/B})^c)$. Under an ampleness assumption, we show that $\gamma(\C)\cap[B]$ is the class of a natural effective cycle with support equal to the closure of the locus of  $\delta$-nodal curves. Finally, we will apply our method to calculate node polynomials for plane curves intersecting general lines in $\PP^3$. We verify our results using 19th century geometry of Schubert.
\end{abstract}

\maketitle
\section{Introduction}
\subsection{The Kleiman-Piene Conjecture}
All schemes we consider are separated and of finite type over $\CC$. Let $B$ be a base scheme, and let $p\colon\S\rightarrow B$ be a smooth family of surfaces, i.e.\ a smooth projective morphism of relative dimension $2$. By a \emph{curve}, we mean a proper $1$-dimensional scheme, not necessary irreducible or reduced. Let $\C$ be a \emph{relative effective (Cartier) divisor} on the family $\S\rightarrow B$, i.e.\ an effective Cartier divisor on $\S$, such that the morphism $\C\rightarrow B$ is flat. Fix a non-negative integer $\delta$. We call a curve \emph{$\delta$-nodal} if it is reduced, has $\delta$ nodes and no other singularities. Consider the following counting problem:

\begin{problem}
What is, if finite, the number of $\delta$-nodal curves in the family $\C\rightarrow B$?
\end{problem}\noindent
More generally, consider the locus
\[B(\delta)\coloneqq\big\{b\in B \mid \C_b \mbox{ is a }\delta\mbox{-nodal curve}\big\}\]
and write $\overline{B(\delta)}$ for its closure in $B$.

\begin{problem}
What is the class $\left[\overline{B(\delta)}\right]\in A_*(B)$?
\end{problem}

Assume that $B$ is Cohen-Macaulay and of pure dimension $n$. In \cite{KP2}, Kleiman and Piene construct a natural effective cycle $U(\delta)$ with support equal to the closure of the locus of $\delta$-nodal curves. For $\delta\leq 8$ they prove that the class $[U(\delta)]$ is given in a rather specific form as a polynomial in the classes
\[\epsilon(a,b,c)\coloneqq p_*(c_1(\O_\S(\C))^a c_1(T_{\S/B})^bc_2(T_{\S/B})^c)\,.\]

Kleiman and Piene work with certain assumptions on the dimensions of equisingular strata in the family. For a curve $C$, let $\D$ be its equisingularity type. It can be represented by an Enriques diagram, which encodes the numerical invariants of the singularities of $C$ \cite{KP1}. Conversely, for an equisingularity type (or Enriques diagram) $\D$, we write $B(\D)\subset B$ for the locus of curves in the family $\C\rightarrow B$, with equisingularity type $\D$.

One of the invariants of an equisingularity type is the codimension $cod(\D)$. It is the `expected codimension' in which curves with equisingularity type $\D$ appear in a family. More precisely, in \cite{KP1}, it is characterized as the codimension of the locus of curves with equisingularity type $\D$ in the universal family $\C\rightarrow |L|$ of any sufficiently ample complete linear system.
The hypotheses on the family $\C\rightarrow B$ under which the class $U(\delta)$ is constructed and which we will denote by \dima\ are the following:
\begin{itemize}
\item The locus of non-reduced curves $B(\infty)$ has codimension $>\delta$;
\item For each equisingularity type $\D$, the locus $B(\D)$ has at least the expected codimension $cod(\D)$, or codimension $>\delta$.
\end{itemize}\noindent
Here we use the convention $\mbox{codim}(\emptyset)=\infty$.
%
%
In \cite{KP2} and \cite{KP3}, the authors prove the following theorem:

\newcommand{\FootnoteBell}{
In fact, their statement is more precise: The polynomials are of the form $P_\delta(a_1,\dots,a_\delta)/\delta!$, in which $P_\delta$ is the $\delta$-th Bell polynomial, and $a_i$ is a linear combination of classes $\epsilon(a,b,c)$ with $a+b+2c= i + 2$, so that $a_i\in A^i(B)$. Moreover, an algorithm is given that produces these classes.
}

\begin{theorem}[Kleiman-Piene]\label{KP}
Under the above hypotheses \dima, the locus $B(\delta)$ of $\delta$ nodal curves is either empty, or has pure codimension $\delta$. There is a natural non-negative cycle $U(\delta)$ with support $\overline{B(\delta)}$. For $\delta\leq 8$, the rational equivalence class $[U(\delta)]$ is given by a universal polynomial\footnote{\FootnoteBell} in the classes $\epsilon(a,b,c)$.
\end{theorem}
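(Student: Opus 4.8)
The plan is to establish the three assertions in turn: pure codimension, existence of the effective cycle, and polynomiality. For the \emph{codimension} I would run a dimension count on equisingular strata. A single node has Enriques diagram with $cod=1$, and more generally if a singularity type $\D$ deforms to $m$ nodes then $cod(\D)\ge m$, with equality only when $\D$ consists of $m$ ordinary nodes (this is part of the codimension formalism of \cite{KP1}). Hence any type $\D$ other than ``$\delta$ nodes'' whose stratum meets $\overline{B(\delta)}$ has $cod(\D)>\delta$, and the first bullet of \dima\ puts the non-reduced locus $B(\infty)$ in codimension $>\delta$ as well. Combining this with the second bullet of \dima\ (every $B(\D)$ has at least its expected codimension) and with the constructibility of $B(\delta)$, one concludes that $B(\delta)$ is open in its closure, that the boundary $\overline{B(\delta)}\setminus B(\delta)$ has codimension $>\delta$, and — using the zero-locus description below — that $B(\delta)$ itself has codimension exactly $\delta$; therefore $\overline{B(\delta)}$ is pure of codimension $\delta$.

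For the \emph{cycle} $U(\delta)$ I would proceed recursively, adjoining one node at a time. The base case is clean: the locus where $\C$ fails to be smooth over $B$ is the zero scheme of the tautological jet section of the rank-$3$ bundle of relative first principal parts $P^1_{\S/B}(\O_\S(\C))$ on $\S$; under \dima\ every contribution to this zero scheme coming from worse singularities or from non-reduced fibres is strictly deeper, so it is a regular zero scheme of pure codimension $3$ and $U(1)\coloneqq p_*\big(c_3(P^1_{\S/B}(\O_\S(\C)))\cap[\S]\big)$ is effective with support $\overline{B(1)}$. For the inductive step one pulls the family back to the universal singular point lying over $\overline{B(\delta-1)}$ and repeats the jet construction to pin down one further node; the genuine difficulty is that the new node may coincide with, or be infinitely near to, a previously chosen one, so the naive zero locus carries excess along the diagonals. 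I would resolve this by blowing up the relevant diagonals in a Semple-type tower (equivalently, resolving the relative Hilbert scheme of points $\hilb^\delta_{\S/B}$ along its small diagonals) until the contact conditions become regularly embedded, define the relevant locus $\mathcal H(\delta)$ there as an honest zero scheme, and let $U(\delta)$ be its proper pushforward to $B$ (divided by $\delta!$ if one works with ordered configurations). Under \dima\ the exceptional contributions are supported on strata of codimension $>\delta$ and drop out, so $U(\delta)$ is effective with support $\overline{B(\delta)}$. An equivalent bookkeeping organizes the ``at least $k$ nodes'' classes into an exponential generating series, from which $U(\delta)$ is recovered by inclusion--exclusion; this is what produces the Bell-polynomial shape recorded in the footnote.

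Finally, \emph{polynomiality in the $\epsilon(a,b,c)$}. By construction $[U(\delta)]$ is a proper pushforward to $B$ of a polynomial in Chern classes of principal-parts bundles of $\O_\S(\C)$, of $T_{\S/B}$, and of the exceptional divisors of the blow-ups. Each principal-parts bundle $P^k_{\S/B}(\O_\S(\C))$ carries a filtration with graded pieces $\sym^i\Omega_{\S/B}\otimes\O_\S(\C)$, so its Chern classes are polynomials in $c_1(\O_\S(\C))$ and $c_1(T_{\S/B}),c_2(T_{\S/B})$, and the projection formula turns the pushforward of any monomial in these into a polynomial in the classes $\epsilon(a,b,c)$. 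The real content — and the step I expect to be the main obstacle — is to check that the correction terms coming from the blown-up diagonals, which a priori involve the normal geometry of those diagonals and their mutual intersections on the tower, also reduce to this universal shape; this is a finite but rapidly growing combinatorial problem in the number of colliding nodes, and $\delta\le 8$ is precisely the range in which \cite{KP3} carries out the explicit (computer-assisted) verification. I would also note the alternative route through the cobordism and Hilbert-scheme-of-points techniques of G\"ottsche, Tzeng and Kool--Shende--Thomas, which proves universality of the \emph{numbers} on sufficiently ample linear systems but does not by itself produce the effective cycle $U(\delta)$ over an arbitrary Cohen-Macaulay base; for the statement at hand the direct construction above is what is required.
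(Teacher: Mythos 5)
This statement is quoted background: the paper does not prove Theorem \ref{KP} but attributes it to \cite{KP2} and \cite{KP3}, so there is no in-paper proof to compare against. (The paper's own contribution, Theorem \ref{Theorem A}, attacks the \emph{conjecture} by a genuinely different route --- BPS combinations of relative Hilbert schemes of points and the relative EGL algorithm --- under the extra hypothesis \amp\ and the weaker \dimb.) Measured against the actual argument of Kleiman--Piene, your sketch has the right skeleton: the singular locus of $\C\rightarrow B$ as the zero scheme of the section of the rank-$3$ bundle of relative principal parts, a recursion that adjoins one singular point at a time with correction terms for coinciding or infinitely near points, the Bell-polynomial bookkeeping, and the reduction of Chern classes of principal-parts bundles to the $\epsilon(a,b,c)$ via the filtration with graded pieces $\sym^i\Omega_{\S/B}\otimes\O_\S(\C)$. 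The restriction to $\delta\leq 8$ does indeed come from the explicit computation of the finitely many collision corrections.

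That said, two points in your write-up are genuine gaps rather than omitted routine detail. First, the purity statement: \dima\ only gives the \emph{lower} bound $\mathrm{codim}\,B(\D)\geq cod(\D)$; the \emph{upper} bound $\mathrm{codim}\,\overline{B(\delta)}\leq\delta$ requires exhibiting $\overline{B(\delta)}$ as a component of the image of a locus cut out by $3\delta$ equations in the $\delta$-fold fibre product of $\S$ over $B$ (codimension $\leq 3\delta$ upstairs, hence $\leq\delta$ downstairs after subtracting the $2\delta$ fibre dimensions); your appeal to ``the zero-locus description below'' only covers $\delta=1$. Second, and more seriously, the inductive step is asserted rather than constructed: ``blow up the relevant diagonals in a Semple-type tower until the contact conditions become regularly embedded'' and ``the exceptional contributions \ldots drop out'' is precisely the entire technical content of \cite{KP2}, where the hypotheses \dima\ are used to show the excess loci sit in codimension $>\delta$ and where the correction classes are computed one by one. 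Finally, your closing remark that the Hilbert-scheme-of-points techniques ``do not produce the effective cycle $U(\delta)$'' is contradicted by the present paper: Proposition \ref{sup} shows that $\gamma(\C)\cap[B]$ \emph{is} the class of a natural effective cycle supported on $\overline{B(\delta)}$, for all $\delta$, at the price of assuming \amp\ and completeness of $B$.
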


Moreover, in \cite{KP2} the following conjecture is made.

\begin{conjecture}[Kleiman-Piene]\label{conj}
Theorem \ref{KP} holds for all $\delta\geq0$.
\end{conjecture}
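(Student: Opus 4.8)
The plan is to deduce Conjecture~\ref{conj} from the relative-Hilbert-scheme/BPS machinery announced in the abstract. The pure-codimension statement of Theorem~\ref{KP} and the construction of the effective cycle $U(\delta)$ are already available for every $\delta$ from the Kleiman--Piene theory, so the substantive point is to show that $[U(\delta)]$ is, for all $\delta$, a universal polynomial in the classes $\epsilon(a,b,c)$ of the Bell shape of the footnote. I would obtain this by (i) producing a bivariant class $\gamma(\C)\in A^*(B)$ from relative Hilbert schemes of points, (ii) proving by degeneration that $\gamma(\C)$ is a universal polynomial in the $\epsilon(a,b,c)$, and (iii) identifying $\gamma(\C)\cap[B]$ with $[U(\delta)]$ under the hypotheses \dima.

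\textbf{Step 1: the generating function.} For each $n\ge 0$ form the relative Hilbert scheme of points $\C^{[n]}=\hilb^n(\C/B)$ with structure map $\nu_n\colon\C^{[n]}\to B$; as $\C\to B$ has relative dimension $1$ this is the natural stable-pairs moduli space in the fibre curve class of $\C$. Pushing forward a canonical cycle class --- a Chern--Schwartz--MacPherson-type class, reducing in the unobstructed case to a relative Segre class of the deformation complex --- gives a series $\mathsf{Z}_{\C/B}(z)=\sum_{n\ge 0}\nu_{n,*}(\cdots)\,z^n\in A^*(B)[[z]]$. Define $\gamma(\C)$ by applying to $\mathsf{Z}_{\C/B}$ the fixed universal substitution $z\mapsto z\,\phi(z)$ with $\phi\in 1+z\,\ZZ[[z]]$ of the G\"ottsche--Yau--Zaslow/Pandharipande--Thomas BPS formalism and reading off the coefficient that governs $\delta$ nodes; then $\gamma(\C)$ is a fixed universal polynomial in the coefficients of $\mathsf{Z}_{\C/B}$.

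\textbf{Step 2: universality by degeneration.} The heart of the argument is to show that $\mathsf{Z}_{\C/B}$ factors multiplicatively through the four Chern numbers of $(\S\to B,\C)$: there are universal series $A_0,A_1,A_2,A_3\in 1+z\,\ZZ[[z]]$ with
\[\mathsf{Z}_{\C/B}(z)=A_0(z)^{\epsilon(2,0,0)}\,A_1(z)^{\epsilon(1,1,0)}\,A_2(z)^{\epsilon(0,2,0)}\,A_3(z)^{\epsilon(0,0,1)}.\]
I would prove this by a semistable degeneration of $\S\to B$ along a smooth relative divisor --- degenerating a fibre surface, via deformation to the normal cone, to a transverse union of two surfaces glued along a curve --- together with a degeneration formula for the cycle $\mathsf{Z}$ in the spirit of the Li--Wu and Levine--Pandharipande gluing results. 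Under such a formula the additivity of the $\epsilon(a,b,c)$ in the degeneration becomes multiplicativity of $\mathsf{Z}$, which forces the product shape above; a short list of explicit model families (projective bundles over curves, abelian-surface bundles, \dots) then determines $A_0,\dots,A_3$, and comparison with Theorem~\ref{KP} fixes their low-order terms. Since the BPS substitution is universal, $\gamma(\C)$ is a weighted-homogeneous universal polynomial in the $\epsilon(a,b,c)$, and the identity $\log\mathsf{Z}_{\C/B}=\sum_i(\log A_i)\,\epsilon_i$ is exactly what produces the exponential, Bell-polynomial form predicted in the footnote.

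\textbf{Step 3: identification, and the main obstacle.} Under \dima\ I would finally show $\gamma(\C)\cap[B]=[U(\delta)]$ together with the pure-codimension claim. The dimension hypotheses place the non-reduced locus $B(\infty)$ and every non-$\delta$-nodal equisingular stratum $B(\D)$ with $cod(\D)>\delta$ in codimension $>\delta$, so such strata contribute nothing to the degree-$\delta$ part of $\mathsf{Z}_{\C/B}$ after the BPS extraction; over $B(\delta)$ the fibrewise local model near each node is the standard one, so the $\delta$-node coefficient recovers precisely Kleiman--Piene's cycle $U(\delta)$ with its intrinsic multiplicity, and generic transversality along $B(\delta)$ forces $B(\delta)$ to be empty or of pure codimension $\delta$. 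The main obstacle is Step~2: proving a degeneration formula for the \emph{cycle-valued} series $\mathsf{Z}_{\C/B}$ and controlling the excess and obstruction contributions as the surface acquires a node, so that $\mathsf{Z}_{\C/B}$ genuinely retains no information about the family beyond the four numbers $\epsilon(a,b,c)$. A secondary difficulty is making Step~3 literal: matching the Hilbert-scheme description of the BPS coefficient with Kleiman--Piene's incidence-variety construction of $U(\delta)$, multiplicities included.
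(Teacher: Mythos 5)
Your overall architecture (a BPS--type class $\gamma(\C)$ built from relative Hilbert schemes of points, a universality statement for it, then an identification of $\gamma(\C)\cap[B]$ with an enumerating cycle) is the same as the paper's, but Step 2 has a genuine gap. The product formula $\mathsf{Z}_{\C/B}=A_0^{\epsilon(2,0,0)}A_1^{\epsilon(1,1,0)}A_2^{\epsilon(0,2,0)}A_3^{\epsilon(0,0,1)}$ is the shape of the \emph{absolute} G\"ottsche conjecture: the four exponents you use are exactly the classes with $a+b+2c=2$, which lie in $A^0(B)$, so your right-hand side lives in $A^0(B)[[z]]$, whereas the family invariant must have coefficients in all of $A^{*}(B)$ and, per the footnote to Theorem \ref{KP}, is a Bell polynomial in classes $a_i\in A^i(B)$ built from \emph{all} the $\epsilon(a,b,c)$ with $a+b+2c=i+2$. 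A function of four locally constant integers cannot produce such a class. More seriously, the degeneration/double-point machinery of Levine--Pandharipande and Tzeng computes numbers; no degeneration formula for the cycle-valued series $\mathsf{Z}_{\C/B}$ over an arbitrary base is available, and controlling the excess contributions in such a relative degeneration is precisely the unsolved problem you flag yourself. The paper sidesteps all of this with a \emph{relative} Ellingsrud--G\"ottsche--Lehn theorem (Theorem \ref{EGLrel}): the Gysin push-forwards $q^{[n]}_*P$ of polynomials in the Chern classes of $T_{\S_B^{[n]}/B}$ and of tautological bundles are universal polynomials in the classes $q_*p(T_{\S/B},\F_i)$, by the same induction on incidence Hilbert schemes as in \cite{EGL} with integrals replaced by push-forwards to $B$. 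That argument needs no degeneration and directly yields the required polynomial structure.

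Your Step 3 also glosses over the inputs that make the identification work, and claims more than the paper proves. The paper's identification does not proceed by a local model near the nodes; it embeds $B$ into the relative linear system $|\L/B|$ via the canonical section $s$, passes to a resolution of singularities, and reduces to the smooth case, where the identity $c(T_B)\,n^{\vir}_{g-\delta}\cap[B]=c_{\SM}(n_{g-\delta})$ (Proposition \ref{defect}) combined with MacPherson's naturality, the Pandharipande--Thomas vanishing $n_{r,C}=0$ for $r<g-\delta(C)$, and the value $n_{g-\delta,C}=1$ on $\delta$-nodal curves gives both the support statement and the multiplicity one. This reduction is exactly where the ampleness hypothesis \amp\ enters (it guarantees the fibrewise Hilbert schemes $\C^{[i]}_{|\L_b|}$ are smooth, via \cite{KST}); without it the "generic transversality'' you invoke has no basis. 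Consequently the paper establishes the conjecture only for complete $B$, under \amp, and for its own natural cycle $s^!U(\delta)$ rather than Kleiman--Piene's incidence-variety cycle; your proposal asserts the full statement under \dima\ alone but supplies no mechanism for dropping the ampleness assumption or for matching multiplicities with the Kleiman--Piene construction.
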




In this paper we propose a class $\gamma(\C)\in A^\delta(B)$, enumerating the $\delta$-nodal curves, inspired by the BPS calculus of Pandharipande and Thomas \cite{PT}. We will show that if $B$ is complete, but not necessarily Cohen-Macaulay, the class $\gamma(\C)\cap [B]$ is the rational equivalence class of a natural cycle with support $\overline{B(\delta)}$. For this we work with hypotheses \dimb, similar to but slightly weaker than \dima, and an additional ampleness assumption \amp. By means of a family version of an algorithm by Ellingsrud, G\"ottsche and Lehn \cite{EGL}, we show that without assumptions, the class $\gamma(\C)$ is a universal polynomial in the classes $\epsilon(a,b,c)$.  This will be the content of Theorem \ref{Theorem A} below.


\subsection{BPS numbers}
Let $C$ be a locally planar curve of arithmetic genus $g$. In \cite{PT} the authors consider the following transformation of the generating series of topological Euler characteristics of Hilbert schemes $C^{[i]}$ of $i$ points on $C$, which defines the \emph{BPS numbers} $n_{r,C}$ of $C$.
\[\sum_{i=0}^\infty e(C^{[i]})\,q^i = \sum_{r=-\infty}^g n_{r,C}\,q^{g-r} (1-q)^{2r-2} \, .\]
They prove the following:
\begin{theorem}[Pandharipande-Thomas]\label{PT}
The numbers $n_{r,C}$ are zero, unless $g - \delta \leq r \leq g$, where $\delta$ is the $\delta$-invariant of $C$, i.e., $g-\delta$ is the geometric genus of $C$.
\end{theorem}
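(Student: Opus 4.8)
The plan is to reduce Theorem~\ref{PT} to a purely local statement about punctual Hilbert schemes at the singularities of $C$, and then to prove that local statement. Write $Z_C(q):=\sum_{i\ge 0}e(C^{[i]})\,q^i$. Since Euler characteristic is motivic, $Z_C$ is governed by the local structure of $C$: stratifying $C^{[i]}$ by how a length-$i$ subscheme is distributed among the finitely many singular points (on the smooth locus the length-$\ell$ punctual subscheme at a point is unique, so that part is governed by symmetric products) and using Macdonald's formula $\sum_m e(\mathrm{Sym}^m X)\,q^m=(1-q)^{-e(X)}$ gives
\[
Z_C(q)\;=\;(1-q)^{-e(C^{\mathrm{sm}})}\prod_{p\in\mathrm{Sing}(C)}Z_p(q),\qquad Z_p(q):=\sum_{i\ge0}e\!\left(\mathrm{Hilb}^i_p(C)\right)q^i,
\]
with $\mathrm{Hilb}^i_p(C)$ the punctual Hilbert scheme of length $i$ at $p$ and $C^{\mathrm{sm}}$ the smooth locus. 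Writing $\nu\colon\widetilde C\to C$ for the normalization and $b_p:=\#\nu^{-1}(p)$, one has $e(C^{\mathrm{sm}})=e(\widetilde C)-\sum_p b_p$ and $e(\widetilde C)=2-2\widetilde g$ with $\widetilde g:=g-\delta$ the geometric genus, so after multiplying by $(1-q)^{2-2\widetilde g}$ the normalization's contribution cancels and
\[
(1-q)^{\,2-2\widetilde g}\,Z_C(q)\;=\;\prod_{p\in\mathrm{Sing}(C)}\big[(1-q)^{b_p}Z_p(q)\big].
\]

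Next I would translate the theorem. Substituting $r=g-k$ in the BPS expansion of the excerpt gives $Z_C(q)=(1-q)^{2\widetilde g-2}\sum_{k\ge0}n_{g-k,C}\,q^{k}(1-q)^{2(\delta-k)}$, and since the $q^{k}(1-q)^{2(\delta-k)}$ have distinct $q$-valuations they are linearly independent; hence Theorem~\ref{PT} (vanishing of $n_{r,C}$ outside $g-\delta\le r\le g$) is equivalent to the statement that $(1-q)^{2-2\widetilde g}Z_C(q)$ lies in the span of $\{q^{k}(1-q)^{2(\delta-k)}\}_{0\le k\le\delta}$, and a short linear-algebra check identifies this span with the space of polynomials $P$ of degree $\le 2\delta$ with $q^{2\delta}P(1/q)=P(q)$. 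Combining this with the product formula, everything reduces to the \emph{local claim}: for each reduced plane curve singularity germ $(C,p)$, the series $(1-q)^{b_p}Z_p(q)$ is a polynomial of degree $\le 2\delta_p$ with $q^{2\delta_p}\big[(1-u)^{b_p}Z_p(u)\big]\big|_{u=1/q}=(1-q)^{b_p}Z_p(q)$ — a product of such ``palindromic'' polynomials of degrees $2\delta_p$ being again one of degree $\le 2\sum_p\delta_p=2\delta$. (One can instead induct on $\delta$ via partial normalization at a single point, but this only relocates the problem to the same local claim.) As a check, for a node $(1-q)^{2}Z_p(q)=1-q+q^2$ and for a cusp $(1-q)Z_p(q)=1+q^2$, both palindromic of degree $2=2\delta_p$; for $\delta$-nodal $C$ the product is $(1-q+q^2)^{\delta}$, which reproduces the known $n_{r,C}$.

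To prove the local claim I would proceed in two steps. First, $(1-q)^{b_p}Z_p(q)$ is a polynomial with non-negative integer coefficients: for $i\gg0$ the punctual Hilbert scheme $\mathrm{Hilb}^i_p(C)$ fibers over the finitely many finite-dimensional Jacobi factors of the germ with affine-space fibers, so $e(\mathrm{Hilb}^i_p(C))$ is eventually a polynomial in $i$ of degree $b_p-1$; this forces $Z_p(q)=P_p(q)/(1-q)^{b_p}$ with $P_p\in\ZZ_{\ge0}[q]$, and the sharp bound $\deg P_p\le 2\delta_p$ reflects that the defect between $e(\mathrm{Hilb}^i_p(C))$ and its leading polynomial vanishes once $i$ exceeds the colength $2\delta_p$ of the conductor of $(C,p)$. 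Second, for the functional equation $q\mapsto 1/q$ I would invoke Serre duality on the germ: the involution $M\mapsto\mathrm{Hom}(M,\omega)$ on rank-one torsion-free modules interchanges the Jacobi factors in complementary degrees, and after the normalization by $(1-q)^{b_p}$ it becomes the asserted palindromic symmetry of $P_p$. Granting the local claim, multiplying over all $p\in\mathrm{Sing}(C)$ and re-expanding $\prod_p(1-q)^{b_p}Z_p(q)$ in the basis $\{q^{k}(1-q)^{2(\delta-k)}\}_{0\le k\le\delta}$ produces the $n_{r,C}$ for $g-\delta\le r\le g$ and the vanishing otherwise.

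I expect the main obstacle to be the local claim, and within it the two sharp constraints — the exact pole order $b_p$ at $q=1$ and the exact degree $2\delta_p$ of the numerator. Punctual Hilbert schemes of plane curve singularities are genuinely intricate (already for the $A_n$ and $D_n$ singularities they are reducible and non-equidimensional), so controlling $e(\mathrm{Hilb}^i_p(C))$ for all $i$, not merely asymptotically, requires a careful stratification of $\mathrm{Hilb}^i_p(C)$ according to the behaviour of a subscheme along the $b_p$ branches and relative to the conductor; and although the palindromicity is conceptually Serre duality, realizing the duality involution concretely on these singular Hilbert schemes — or tracking it through the fibration over the Jacobi factors — is where the real work lies.
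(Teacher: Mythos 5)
This statement is not proved in the paper at all: Theorem \ref{PT} is quoted from \cite{PT} (their Appendix B), so there is no internal proof to compare against, and I assess your attempt on its own terms. The reductive part of your proposal is correct and cleanly done: the motivic product formula $Z_C(q)=(1-q)^{-e(C^{\mathrm{sm}})}\prod_p Z_p(q)$, the bookkeeping $e(C^{\mathrm{sm}})=2-2\tilde g-\sum_p b_p$, the identification of the span of $\{q^k(1-q)^{2(\delta-k)}\}_{0\le k\le\delta}$ with the polynomials of degree $\le 2\delta$ satisfying $q^{2\delta}P(1/q)=P(q)$, and hence the equivalence of the theorem with the local palindromicity of $(1-q)^{b_p}Z_p(q)$, are all right, and your node and cusp computations are correct (indeed $(1-q+q^2)^\delta=\sum_i\binom{\delta}{i}q^i(1-q)^{2(\delta-i)}$ recovers $n_{g-i}=\binom{\delta}{i}$ for a $\delta$-nodal curve).

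The genuine gap is that the local claim carries the entire content of the theorem and is not proved: both the degree bound $\deg P_p\le 2\delta_p$ and the symmetry $q^{2\delta_p}P_p(1/q)=P_p(q)$ are asserted with a heuristic (``reflects \dots the conductor'', ``I would invoke Serre duality'') rather than an argument, as you yourself concede. Concretely: (i) the statement that $e(\mathrm{Hilb}^i_p(C))$ is eventually a polynomial of degree exactly $b_p-1$ in $i$, becoming so at the conductor colength, requires controlling the fibres of the local Abel--Jacobi map, and these are not affine spaces in general but open subsets of projective spaces of homomorphisms $M\to\mathcal{O}_{C,p}$ whose Euler characteristics must actually be computed; your subsidiary claim that $P_p\in\ZZ_{\ge0}[q]$ is in fact contradicted by your own node example $P_p=1-q+q^2$, which signals that the ``fibration with affine-space fibres'' picture cannot be taken at face value; (ii) transporting the involution $M\mapsto\mathrm{Hom}(M,\omega)$ from modules to punctual Hilbert schemes so as to yield the palindromy of $P_p$ is precisely the hard step. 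For comparison, the published proof in \cite{PT} avoids a singularity-by-singularity analysis altogether: it works globally with the Abel--Jacobi map $C^{[n]}\to\overline{\mathrm{Pic}}^n(C)$ to the compactified Jacobian, whose fibres are honest projective spaces $\PP(H^0(F))$ because a locally planar curve is Gorenstein, together with Serre duality $h^1(F)=h^0(\mathcal{H}om(F,\omega_C))$ and bounds on $h^0$ obtained via the normalization, the geometric genus entering through $\widetilde C$ rather than through punctual Hilbert schemes. Your reformulation is correct and genuinely illuminating (it is close in spirit to later local approaches to these generating functions), but as a proof it is incomplete at its central step.
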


Shende proves in \cite{Shende2012} that the number $n_{g-i,C}$ equals the degree of the subvariety of $i$-nodal curves in the versal deformation space of $C$. In particular it is positive for $0\leq i \leq \delta$.

Let $B$ be a scheme and let $p\colon\C\rightarrow B$ be a family of curves, i.e., a projective flat morphism of relative dimension $1$. Assume that the fibres are locally planar curves of genus $g$. Let
\[p^{[i]}\colon \C_B^{[i]}= \hilb^i(\C/B)\rightarrow B\]
be the relative Hilbert scheme of $i$ points on the fibres of $\C\rightarrow B$. We define constructible functions $n_r=n_r(\C)$ on $B$ by
\begin{equation}\label{top}
\sum_{i=0}^\infty p_*^{[i]}(1_{\C_B^{[i]}})\,q^i = \sum_{r=-\infty}^g n_r\,q^{g-r} (1-q)^{2r-2} \, .
\end{equation}
In other words, $n_r$ is the function that assigns the number $n_{r,\C_b}$ to a point $b\in B$. By Theorem \ref{PT}, the function $n_{g-\delta}$ has support on curves with $\delta$-invariant $\geq \delta$. In the same paper it is shown that $n_{g-\delta,C}=1$ for a $\delta$-nodal curve $C$.

Let $\S\rightarrow B$ be a smooth family of surfaces and let $\C\subset \S$ be a relative effective divisor. We will use the embedding $\C\hookrightarrow\S$ to analogously define classes $n_r^{\vir}\in A^*B$. In fact, $\C_B^{[i]}$ is a subscheme of $\S_B^{[i]}=\hilb^i(\S/B)$, cut out transversely, i.e., in the expected codimension $i$, and regularly by the tautological bundle $\O(\C)_B^{[i]}$ (see Lemma \ref{lci}). The scheme $\S_B^{[i]}$ is smooth over $B$ \cite{AIK} and the \emph{virtual tangent bundle} $T_{\C_B^{[i]}/B}$, as defined in \cite[B.7.6]{Fu}, is given by the class
\[\left[\left.T_{\S_B^{[i]}/B}\right|_{\C_B^{[i]}}-\left.\O(\C)_B^{[i]}\right|_{\C_B^{[i]}}\right]\]
in the Grothendieck group $K(\C_B^{[i]})$ of vector bundles on $\C_B^{[i]}$. Let
\[c\colon K \Rightarrow (A^*)^\times\]
be the total Chern class. Then the classes $n^{\vir}_r = n_r^{\vir}(\C)\in A^*(B)$ are defined by the equation
\begin{equation}\label{vir}
\sum_{i=0}^\infty p_*^{[i]}c(T_{\C_B^{[i]}/B})\, q^i = \sum_{r=-\infty}^g n_r^{\vir}\,q^{g-r} (1-q)^{2r-2} \, .
\end{equation}
Here the homomorphism
\[p^{[i]}_*\colon A^*(\C_B^{[i]})\rightarrow A^*(B)\]
denotes the Gysin push-forward as defined in \cite[Chapter 17]{Fu}. We define
\[\gamma(\C) = \left\{n_{g-\delta}^{\vir}(\C)\right\}_\delta\in A^\delta(B)\]
to be equal to the degree-$\delta$ part of $n_{g-\delta}^{\vir}(\C)$. We will show that it reflects some of the properties of $n_{g-\delta}(\C)$. In fact, in Proposition \ref{defect}, we will compare $n_r$ and $n^\vir_r$ by means of the Chern-Schwartz-MacPherson class.

\begin{remark}
G\"ottsche and Shende \cite{GS14} also mention the Chern-Schwartz-MacPherson class of the constructible function $n_r$ as an invariant counting nodal curves. Moreover they consider an analogous class, using the virtual tangent bundle of Hilbert schemes of points of the curve. However, the use of the \emph{relative} tangent bundle, which is natural from the point of view of \cite{KP2}, is essential for our results.
\end{remark}


\subsection{Results}\label{results}
Recall that a line bundle $L$ on a smooth projective surface $S$ is called \emph{$\delta$-very ample} if for any finite subscheme $Z\subset S$ of length $\delta+1$, the map $H^0(S,L)\rightarrow H^0(Z,L|_Z)$ is surjective \cite{BS}. For a line bundle $\L$ on a smooth family of surfaces $\S\rightarrow B$, consider the following ampleness hypotheses, which we denote by \amp:

\vbox{
\begin{itemize}
\item For every $b\in B$, the line bundle $\L_b= \L|_{\S_b}$ on $\S_b$ is $\delta$-very ample.
\item The dimension of the vector spaces $H^0(\S_b,\L_b)$ is locally constant on $B$.
\end{itemize}}

Now let $\C$ and $\S\rightarrow B$ be given as above, and let $\L = \O(\C)$. Without making any assumptions on the dimensions of the equisingular strata, \amp\, guarantees that the class $\gamma(\C)\cap [B]$ is supported on the locus of curves with $\delta$-invariant $\geq \delta$. If $B$ is equidimensional, it will follow (Proposition \ref{sup}) that $\gamma(\C)\cap [B]$ is the class of a natural effective cycle with support $\overline{B(\delta)}$ if we assume the following hypotheses, which we denote by \dimb:
\begin{itemize}
\item The locus of $\delta$-nodal curves, if non-empty, has codimension $\delta$.
\item The loci of curves of the following type have codimension $>\delta$:
\begin{itemize}
\item Curves with $\delta$-invariant $>\delta$;
\item Curves with $\delta$-invariant $=\delta$, but with singularities other than nodes;
\item Non-reduced curves.
\end{itemize}
\end{itemize}\noindent
As explained in \cite{KP1}, for an equisingularity type $\D$ we have $cod(\D)\geq \delta(\D)$, with equality only for $\delta$-nodal curves. It follows that \dimb\, is slightly weaker than \dima. To summarize, we will prove the following theorem:

\begin{result}\label{Theorem A}
Let $B$ be a scheme and fix an integer $\delta$. Let $\C$ be a relative effective divisor on a flat family of smooth surfaces $\S \rightarrow B$. Then the class $\gamma(\C)$ can be expressed universally as a polynomial of degree $\delta$ in classes of the form $\epsilon(a,b,c)=p_*(c_1(\O(\C))^a c_1(T_{\S/B})^b c_2(T_{\S/B})^c)$.
Now assume that $B$ is complete of pure dimension $n$, that the line bundle $\O_\S(\C)$ satisfies \amp, and moreover assume that $\C\rightarrow B$ satisfies \dimb. Then the class $\gamma(\C)\cap[B]\in A_{n-\delta}(B)$ is the class of a natural cycle on $B$ with support $\overline{B(\delta)}$.  
\end{result}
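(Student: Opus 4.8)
\emph{Polynomiality.} By Lemma \ref{lci} the relative Hilbert scheme $\C_B^{[i]}$ is the zero scheme of a regular section of the rank-$i$ tautological bundle $\O(\C)_B^{[i]}$ on $\S_B^{[i]}$, which is smooth over $B$; hence $\C_B^{[i]}$ is a local complete intersection over $B$ with virtual normal bundle $\O(\C)_B^{[i]}|_{\C_B^{[i]}}$. Writing $\iota\colon\C_B^{[i]}\hookrightarrow\S_B^{[i]}$ and $\bar{p}^{[i]}\colon\S_B^{[i]}\to B$, the projection and self-intersection formulas turn the terms $p_*^{[i]}\,c\big(T_{\C_B^{[i]}/B}\big)$ appearing in \eqref{vir} into tautological integrals over the smooth family $\S_B^{[i]}\to B$:
\[
p_*^{[i]}\,c\big(T_{\C_B^{[i]}/B}\big)=\bar{p}^{[i]}_*\!\left(\frac{c\big(T_{\S_B^{[i]}/B}\big)}{c\big(\O(\C)_B^{[i]}\big)}\cdot c_i\big(\O(\C)_B^{[i]}\big)\right),
\]
the right-hand side being the pushforward of a polynomial in the Chern classes of the relative tangent bundle $T_{\S_B^{[i]}/B}$ and of $\O(\C)_B^{[i]}$. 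I would then carry out the Ellingsrud--G\"ottsche--Lehn algorithm \cite{EGL} relatively over $B$: the nested relative Hilbert schemes $\S_B^{[i,i+1]}$ are smooth over $B$ and carry the same incidence and blow-up structure over $B$ as in the absolute case, so the EGL recursion expresses each such pushforward as a polynomial, with universal rational coefficients, in the relative Chern numbers $\epsilon(a,b,c)$. Since $n_{g-\delta}^{\vir}$ involves only the terms $p_*^{[i]}\,c\big(T_{\C_B^{[i]}/B}\big)$ with $i\leq\delta$, inverting the substitution $q\mapsto q(1-q)^{-2}$ in \eqref{vir} — and using that the arithmetic genus $g$ of the fibres is locally constant and, by adjunction, itself a combination of the $\epsilon(a,b,c)$ — then expresses $\gamma(\C)=\{n_{g-\delta}^{\vir}\}_\delta$ as a universal polynomial of degree $\delta$ in the $\epsilon(a,b,c)$.

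\emph{The geometric statement.} Now let $B$ be complete of pure dimension $n$, with \amp\ and \dimb\ in force. The role of $\delta$-very ampleness of the fibres $\O(\C)|_{\S_b}$ is that length-$\leq\delta$ subschemes of the curves impose independent conditions, which makes the formation of the classes $n_r^{\vir}$ compatible with base change and reduces the analysis locally to the versal deformation of a single fibre; combined with the Chern--Schwartz--MacPherson comparison of $n_r$ with $n_r^{\vir}$ (Proposition \ref{defect}) and the Pandharipande--Thomas vanishing (Theorem \ref{PT}), this shows — this is Proposition \ref{sup} — that $\gamma(\C)\cap[B]$ is supported on the closed locus $W=\{b\in B:\delta(\C_b)\geq\delta\}$. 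Since $\gamma(\C)\cap[B]\in A_{n-\delta}(B)$, and by \dimb\ every irreducible component of $W$ other than those of $\overline{B(\delta)}$ — curves with $\delta$-invariant $>\delta$, curves with $\delta$-invariant $=\delta$ but with singularities other than nodes, and non-reduced curves — has dimension $<n-\delta$, while $\overline{B(\delta)}$ is pure of dimension $n-\delta$, it follows that $\gamma(\C)\cap[B]=\sum_i m_i\,[\overline{B(\delta)}_i]$ over the irreducible components $\overline{B(\delta)}_i$, for integers $m_i$. To evaluate $m_i$ I would localize at the generic point $\eta_i$: there $\C_{\eta_i}$ is $\delta$-nodal and the $\delta$ node conditions are generically transverse, so $n_{g-\delta}^{\vir}$ agrees with the constructible function $n_{g-\delta}$, whose value on a $\delta$-nodal curve is $1$ by \cite{PT}; through Proposition \ref{defect} this pins $m_i$ to the multiplicity of the natural scheme structure on the $\delta$-nodal locus along $\eta_i$, so in particular $m_i=1$ and $\gamma(\C)\cap[B]=[\overline{B(\delta)}]$ when that locus is reduced. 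Finally $m_i\geq0$ follows from Shende's non-negativity of the numbers $n_{g-\delta,C}$ \cite{Shende2012} through the same comparison, so $\gamma(\C)\cap[B]$ is the class of a natural effective cycle with support $\overline{B(\delta)}$.

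\emph{Main obstacle.} I expect the heaviest step to be the relativization of EGL: over a possibly singular, non-equidimensional base the cobordism argument of \cite{EGL} is unavailable, so the universal polynomiality must be extracted from the inductive clearing of denominators in bivariant Chow theory, which requires verifying that the nested relative Hilbert schemes and their tautological exact sequences behave exactly as over a point. On the geometric side the delicate point is Proposition \ref{sup}: forcing \amp\ to genuinely confine $\gamma(\C)\cap[B]$ to $W$ amounts to controlling the defect of Proposition \ref{defect} — the Chern--Schwartz--MacPherson correction — along fibres that are singular but have $\delta$-invariant $<\delta$, where a priori the virtual and topological counts could diverge.
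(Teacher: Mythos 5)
Your first half (polynomiality) follows the paper's route: rewrite $p^{[i]}_*c(T_{\C_B^{[i]}/B})$ as a tautological pushforward from the smooth family $\S_B^{[i]}\rightarrow B$ via Lemma \ref{lci}, then run a relative version of the EGL algorithm. The only point you skate over is why the resulting polynomial has degree exactly $\delta$ rather than $\leq\delta$; the paper extracts this from the term $c_2(S)^\delta/\delta!$ produced by $c_{2\delta}(T_{S^{[\delta]}})\omega^\delta$ in the model case of a general $\PP^\delta\subset|L|$. That is a minor omission.

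The second half has a genuine gap, and it is exactly the one you flag yourself under ``Main obstacle.'' Your support argument invokes Proposition \ref{defect} (the Chern--Schwartz--MacPherson comparison) on $B$ itself, but that proposition requires the relative Hilbert schemes $\C_B^{[i]}$ --- and hence $B$ --- to be nonsingular, which is not among the hypotheses of Theorem \ref{Theorem A}: $B$ is merely complete of pure dimension $n$. The appeal to ``reducing locally to the versal deformation'' and to ``localizing at the generic point $\eta_i$'' does not repair this, because $n^{\vir}_{g-\delta}$ is a bivariant class built from relative virtual tangent bundles and there is no a priori reason its cap with $[B]$ is controlled by the fibres over generic points when the total spaces $\C_B^{[i]}$ are singular. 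The missing idea is the mechanism of Proposition \ref{sup}: use \amp\ to form the bundle of complete linear systems $|\L/B|=\PP(q_*\L)$ with its canonical section $s\colon B\rightarrow|\L/B|$, pass to a resolution $f\colon B'\rightarrow B$ with $f_*[B']=[B]$, observe (Lemma \ref{keyex}) that over $|\L'/B'|$ the universal divisor \emph{does} satisfy the hypotheses of Proposition \ref{defect} so that $n^{\vir}_{g-\delta}\cap[|\L'/B'|]=[\overline{|\L'/B'|(\delta)}]+(\text{lower-dimensional})$, and then transport this back by the bivariant functoriality of Lemma \ref{func} and the refined Gysin map, yielding $\gamma(\C)\cap[B]=s^!(U(\delta))$. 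Effectivity and the multiplicities (lengths $l(\O_{\pi(W),B})$ of local rings of $B$, not multiplicities of the nodal locus, and in particular not always $1$) come from this formula together with the properness of the intersection of $s(B)$ with $\overline{|\L/B|(\delta)}$ under \dimb, not from Shende's positivity of $n_{g-\delta,C}$. Without this detour through $|\L/B|$, both the support claim and the effectivity claim remain unproved.
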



The conjecture of Kleiman and Piene is a family version of the G\"ottsche conjecture \cite{Go}. For a sufficiently ample line bundle $L$ on a smooth surface $S$, the latter asserts that the degree of the Severi locus of $\delta$-nodal curves in the complete linear system $|L|$, is given by a universal polynomial in the classes $L^2$, $(L.K)$, $K^2$ and $c_2(S)$, for $K$ the canonical divisor on $S$. Equivalently, the number of $\delta$-nodal curves in a general linear system $\PP^\delta\subset |L|$ is given by such a polynomial.

The G\"ottsche conjecture was first proved using algebraic methods by Tzeng in \cite{Tz}. Other proofs were given in \cite{Liu}, \cite{Ka} and \cite{KST}. In \cite{LT} and \cite{Re} the result is generalized to other singularity types.

Our theorem implies the G\"ottsche conjecture for $\delta$-very ample $L$, but it is not independent from existing results. In fact, our method can be seen as a family version of the proof in \cite{KST}. Moreover, sharper results are known in terms of the required ampleness \cite{KS}.

\subsection{Application to plane curves in $\PP^3$}
The motivation for the project was the following counting problem. For fixed integers $\delta\geq0$ and $d>1$ write
\[n= \frac{d(d+3)}{2}+3-\delta\,.\]
and consider lines $\ell_1,\ldots,\ell_n\subset \PP^3$. The space of curves of degree $d$ that lie on a plane in $\PP^3$ and that intersect the lines $\ell_1,\ldots,\ell_n$ has expected dimension $\delta$. We will show that, if we choose the lines $\ell_1,\ldots,\ell_n$ sufficiently general, the subspace of $\delta$-nodal curves is finite (and reduced, as a scheme). For $d \geq \delta$, we can use our method to calculate the number $N_{\delta,d}$ of $\delta$-nodal plane curves of degree $d$ intersecting the lines $\ell_1,\ldots,\ell_n$.

Let $\C\rightarrow B$ be the universal plane curve of degree $d$ in $\PP^3$. We will show in Proposition \ref{finred} that for $d\geq\delta$, we have $N_{\delta,d} = \gamma(\C)\cap [B_{\ell_1,\dots,\ell_n}]$, in which $B_{\ell_1,\ldots,\ell_n}\subset B$ is the closed subvariety of curves intersecting the general lines $\ell_1,\ldots,\ell_n$. We use this to prove our second main result:

\begin{result}\label{Theorem B}
Let $\delta\geq 0$. The number of planar $\delta$-nodal curves of degree $d\geq \delta$ in $\PP^3$ intersecting $n=\frac{d(d+3)}{2}+3-\delta$ general lines is given by a polynomial $N_\delta(d)$ in $d$ of degree $\leq 9+2\delta$. Moreover, for $\delta\leq 12$ these polynomials are the ones given in Appendix \ref{Appendix A}.
\end{result}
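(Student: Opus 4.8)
The plan is to realise the universal planar curve as a relative effective divisor on an explicit projective bundle, reduce $N_{\delta,d}$ to an intersection number there, and then control the dependence on $d$. Let $G=(\PP^3)^\vee$ with hyperplane class $\zeta\in A^1(G)$, let $\mathcal{W}$ be the rank-$3$ tautological subbundle ($0\to\mathcal{W}\to\O_G^{\oplus4}\to\O_G(1)\to 0$, so $c(\mathcal{W})=(1+\zeta)^{-1}$), and let $\S_0=\PP(\mathcal{W})\to G$ be the incidence variety of the universal plane, a $\PP^2$-bundle with relative hyperplane class $\eta$. The space of pairs (plane, degree-$d$ divisor on it) is the $\PP^{N}$-bundle $B=\PP(\F)\to G$, where $\F=\operatorname{Sym}^d\mathcal{W}^\vee$ has fibre $H^0(\O_H(d))$ over $H$ and $N=\binom{d+2}{2}-1$; write $\xi=c_1(\O_B(1))$, so $\dim B=3+N=\tfrac{d(d+3)}{2}+3=n+\delta$. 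Pulling $\S_0$ back to $B$ gives the smooth family $\S\xrightarrow{p}B$ carrying the universal curve $\C$, and one checks $c_1(\O(\C))=d\eta+p^*\xi$, while $c_1(T_{\S/B})$ and $c_2(T_{\S/B})$ are polynomials in $\eta$ whose remaining coefficients are pulled back from $G$, hence contain no $\xi$ and no $d$. Using the rational section $b=(H,C)\mapsto(\ell\cap H,b)$ of $p$, the divisor ``$\C_b$ meets the line $\ell$'' has class $h=d\zeta+\xi\in A^1(B)$, with $\xi$-coefficient exactly $1$. For general $\ell_1,\dots,\ell_n$ the divisors $B_{\ell_i}$, members of the base-point-free family $\{B_\ell\}_\ell$, meet properly, so $[B_{\ell_1,\dots,\ell_n}]=h^n\cap[B]$; combined with Proposition \ref{finred} (whose ampleness hypothesis holds because $\O_{\PP^2}(d)$ is $\delta$-very ample exactly for $d\ge\delta$) this gives, for $d\ge\delta$,
\[ N_{\delta,d}=\int_B\gamma(\C)\cdot h^n . \]

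By the first part of Theorem \ref{Theorem A}, $\gamma(\C)$ is a universal polynomial of degree $\delta$ in the classes $\epsilon(a,b,c)=p_*(c_1(\O(\C))^ac_1(T_{\S/B})^bc_2(T_{\S/B})^c)$. Substituting the formulas above and pushing forward along the $\PP^2$-bundle $p$ — using that $p_*(\eta^{2+t})$ is a $d$-independent polynomial in $\zeta$ vanishing for $t>3$ — expresses each $\epsilon(a,b,c)$, and hence $\gamma(\C)=\sum_{j=0}^{3}g_j(d)\,\zeta^j\xi^{\delta-j}$, with $g_j(d)$ polynomial in $d$. Since $h^n=\sum_{k=0}^{3}\binom nk d^k\zeta^k\xi^{n-k}$ (as $\zeta^4=0$), the projective-bundle formula $\int_B\zeta^i\xi^j=\int_G\zeta^i\,s_{j-N}(\F)$ applies, and $n=N+3-\delta$, one obtains
\[ N_{\delta,d}=\sum_{j+k\le3}g_j(d)\binom nk d^k\int_G\zeta^{j+k}\,s_{3-j-k}(\operatorname{Sym}^d\mathcal{W}^\vee). \]
For fixed $k\le3$ the binomial $\binom nk$ is a polynomial in $d$, and the Segre classes $s_m(\operatorname{Sym}^d\mathcal{W}^\vee)$ are polynomials in $d$ (their coefficients are symmetric functions of the Chern roots, with polynomial-in-$d$ coefficients). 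Hence $N_{\delta,d}=N_\delta(d)$ is a polynomial in $d$.

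For the degree bound I would track $d$-degrees in the last display. There $\binom nk$ contributes degree $2k$, the factor $d^k$ contributes $k$, and $\int_G\zeta^{j+k}s_{3-j-k}(\operatorname{Sym}^d\mathcal{W}^\vee)$ has $d$-degree at most $3(3-j-k)$, because $c_\ell(\operatorname{Sym}^d\mathcal{W}^\vee)$ — a sum over $\ell$-element subsets of the $\binom{d+2}{2}$ Chern roots of products of $\ell$ of them — has $d$-degree at most $2\ell+\ell$, and $s_m$ is weight-$m$ in the $c_\ell$. For $\deg_d g_j$ I would use the shape of the universal polynomial produced by the family version of the algorithm of Ellingsrud, G\"ottsche and Lehn \cite{EGL} in the proof of Theorem \ref{Theorem A}: it is a polynomial in renormalised classes $a_1,\dots,a_\delta$ with $a_i\in A^i(B)$ a $\ZZ$-combination of the $\epsilon(a,b,c)$ with $a+b+2c=i+2$ (as in the footnote to Theorem \ref{KP}), so every monomial $\prod_i\epsilon(a_i,b_i,c_i)$ of $\gamma(\C)$ has $m\le\delta$ factors, all of positive weight, whence $\sum_i a_i\le\sum_i(a_i+b_i+2c_i)=\delta+2m\le3\delta$, while its $A^j(G)$-component (which feeds into $g_j$) has $d$-degree $j-\delta+\sum_i a_i\le2\delta+j$. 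Combining, every term of $N_\delta(d)$ has $d$-degree at most $(2\delta+j)+2k+k+(9-3j-3k)=2\delta+9-2j\le 2\delta+9$. For $\delta\le12$ one then runs the algorithm of Theorem \ref{Theorem A}, substitutes the Chern and Segre classes on $B$ above, and reads off the polynomials $N_\delta(d)$, obtaining the tables of Appendix \ref{Appendix A}; I would cross-check these against a direct enumeration of the relevant curves in the Chow ring of the appropriate incidence varieties, following Schubert.

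The main obstacle is exactly this degree bookkeeping: forcing the bound to be $9+2\delta$ and no larger needs both the estimate $\deg_d s_m(\operatorname{Sym}^d\mathcal{W}^\vee)\le 3m$ (which is where $\dim G=3$ enters) and the fact that the universal polynomial for $\gamma(\C)$ contains no ``weight-zero'' factors such as $\epsilon(2,0,0)=d^2$, which would otherwise inflate $\deg N_\delta$. Identifying the incidence divisor $h$ — in particular verifying that its $\xi$-coefficient equals $1$, so that $h^n$ expands with no runaway power of $d$ — is the other point requiring care, though it is routine.
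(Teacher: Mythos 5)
Your argument for the first statement is essentially the paper's own proof (Corollary \ref{polycurve}): the same reduction via Proposition \ref{finred}, the same identification $[B_\ell]=dH+\xi$ (Lemma \ref{linelocus}, your $h=d\zeta+\xi$), and the same bookkeeping pairing each power of $d$ with a power of the hyperplane class $H$ on $\Gr$ --- the bound $2\delta+j$ for the $H^j$-coefficient of $\gamma(\C)$ and $3m$ for $s_m(\sym^d\U^*)$ combine to give $2\delta+9$ exactly as in the paper. Two remarks. First, your closing claim that the degree bound \emph{needs} the universal polynomial for $\gamma(\C)$ to contain no weight-zero factors such as $\epsilon(2,0,0)=d^2$ is both false and unnecessary: the paper's expression \eqref{gooddef} does involve such factors (the coefficients $a_i$ are polynomials in the genus, and $2g-2=\epsilon(2,0,0)-\epsilon(1,1,0)$), but your own inequality $\sum_i a_i\le\delta+2m\le 3\delta$ uses only that each monomial has at most $m\le\delta$ factors in total --- which is exactly what ``polynomial of degree $\delta$'' in Theorem \ref{Theorem A} asserts --- so weight-zero factors occupy factor slots without inflating the degree; you should therefore drop the appeal to the Bell-polynomial shape in the footnote to Theorem \ref{KP}, which describes the Kleiman--Piene cycle $U(\delta)$ rather than $\gamma(\C)$, and in particular drop ``all of positive weight''. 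Second, for the explicit polynomials with $\delta\le 12$ the paper does not run the relative EGL algorithm but instead evaluates the tautological integrals by the Bott residue formula at the torus-fixed points of $\S_\Gr^{[i]}$ (Lemma \ref{fixedloc}), with the Schubert-calculus verification you suggest carried out on reducible curves in the final section; your EGL route would in principle produce the same numbers, but it is not what was implemented, and the localization computation has the side benefit of reproving polynomiality in $d$ independently of EGL.
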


\subsection*{Acknowledgements}
I thank Ritwik Mukherjee for useful discussions and for providing polynomials for the counting problem in $\PP^3$ that allowed us to verify our results, which was very helpful in an early stage of the project (see remark \ref{Ritwik}). I thank Ragni Piene, Jørgen Rennemo, and my supervisor Martijn Kool for useful discussion and comments on my work. In particular, I thank Martijn for suggesting this project.

\section{Preliminaries}
\subsection{Chern-Schwartz-MacPherson classes}
To any constructible function $f$ on a complete scheme $X$, one can assign a class $c_{SM}(f)$ in the Chow group of $X$, called the \emph{Chern-Schwartz-MacPherson class} of $f$. The existence of well-behaved Chern classes for constructible functions was conjectured by Deligne and Grothendieck and proved by MacPherson in \cite{Ma}. Several other constructions are known. See \cite{Al} for an overview and a new construction in a more general set-up.

For a subset $V$ of a scheme $X$, write $1_V\colon X\rightarrow \ZZ$ for the function with constant value $1$ on its support $V$. Recall that a constructible function is a map $f\colon X\rightarrow \ZZ$ that can be written as a finite sum \[f=\sum_{i\in I}\alpha_i 1_{V_i}\] with $\alpha_i\in\ZZ$ and $V_i\subset X$ closed. Let $F(X)$ be the group of constructible functions on $X$. For a proper morphism $g:X\rightarrow Y$, there is a homomorphism $g_*\colon F(X)\rightarrow F(Y)$ given by
\[g_*(1_V)(y)=e(V\cap g^{-1}(y)),\hspace{1em} y\in Y\, ,\]
in which $V\subset X$ is closed and $e$ is the topological Euler characteristic. For a scheme $X$ let $A_*(X)$ denote the Chow group of $X$. In the following theorem, we will view $A_*$ as a (covariant) functor on the category of complete schemes with proper morphisms to the category of abelian groups. Let $c$ denote the total Chern class.

\begin{theorem}[MacPherson]\label{MP}
There is a unique natural transformation $c_{SM}\colon F\Rightarrow A_*$ satisfying
$c_{SM}(1_X)=c(T_X)$ for $X$ smooth projective.
\end{theorem}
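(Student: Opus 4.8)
The plan is to follow MacPherson's original strategy, splitting the statement into uniqueness and existence. \emph{Uniqueness} is the easier half and rests on resolution of singularities (available here since we work over $\CC$). Suppose $c_{SM}$ and $c'_{SM}$ are two natural transformations satisfying the normalization $c_{SM}(1_X)=c(T_X)\cap[X]$ for $X$ smooth projective. It suffices to show they agree on $1_V$ for every subvariety $V\subset X$, and I would argue by induction on $\dim V$. Choose a resolution $\pi\colon\widetilde V\to V$ with $\widetilde V$ smooth and proper, and observe that $\pi_*(1_{\widetilde V})=1_V+\sum_j\alpha_j 1_{W_j}$ with each $W_j$ of strictly smaller dimension, because $\pi$ is an isomorphism over a dense open of $V$ (so the general fibre is a single reduced point, of Euler characteristic $1$) while the Euler characteristics of the positive-dimensional fibres over the exceptional locus only perturb lower-dimensional strata. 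Naturality gives $\pi_*\,c_{SM}(1_{\widetilde V})=c_{SM}(\pi_*1_{\widetilde V})$, and the left-hand side is fixed by the normalization to be $\pi_*\big(c(T_{\widetilde V})\cap[\widetilde V]\big)$; the same holds for $c'_{SM}$. Subtracting and invoking the inductive hypothesis on the lower-dimensional $W_j$ forces $c_{SM}(1_V)=c'_{SM}(1_V)$.

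For \emph{existence} I would construct $c_{SM}$ explicitly through the \emph{local Euler obstruction} and the \emph{Chern--Mather class}. For a subvariety $V$, let $\nu\colon\widehat V\to V$ be the Nash blowup, carrying the tautological \emph{Nash bundle} $\widehat T$, which restricts to the tangent bundle over the smooth locus $V_{\mathrm{sm}}$. Define the Chern--Mather class $c_M(V)=\nu_*\big(c(\widehat T)\cap[\widehat V]\big)\in A_*(V)$, and define the constructible function $\mathrm{Eu}_V\in F(V)$, the local Euler obstruction, as the obstruction to extending a lifted radial vector field across $\nu$. Two elementary properties drive the construction: first, $\mathrm{Eu}_V$ is supported on $V$ with $\mathrm{Eu}_V|_{V_{\mathrm{sm}}}\equiv1$, so the functions $\{\mathrm{Eu}_W\}_{W\subset X}$, indexed by subvarieties $W$, are \emph{triangular} with respect to dimension and hence form a $\ZZ$-basis of $F(X)$; second, when $V$ is smooth the Nash blowup is an isomorphism, $\mathrm{Eu}_V=1_V$, and $c_M(V)=c(T_V)\cap[V]$. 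I would then \emph{define} a group homomorphism $c_{SM}\colon F(X)\to A_*(X)$, for every $X$, by setting $c_{SM}(\mathrm{Eu}_V)=c_M(V)$ on the basis and extending $\ZZ$-linearly. Since $\mathrm{Eu}_V$ and $c_M(V)$ are intrinsic to $V$, this definition is independent of the ambient $X$, and the second property above is exactly the required normalization.

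It remains to verify \emph{naturality}: for every proper morphism $f\colon X\to Y$ the square $f_*\circ c_{SM}=c_{SM}\circ f_*$ commutes. By additivity it is enough to test this on each basis element, i.e.\ to prove $f_*\,c_M(V)=c_{SM}(f_*\mathrm{Eu}_V)$. Using Chow's lemma and the factorization of a proper map, I would reduce to two cases: a closed immersion, where the identity is immediate because both $\mathrm{Eu}_V$ and $c_M(V)$ are intrinsic and commute with the inclusion; and a projection $p\colon Y\times\PP^k\to Y$, where the content is genuine. Matching $p_*\,c_M(V)$ in Chow homology with the Chern--Mather-type class prescribed by the constructible function $p_*\mathrm{Eu}_V$, whose values are the Euler characteristics $e\big(V\cap p^{-1}(y)\big)$, is the heart of the theorem and its main obstacle.

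To control the projection case I would invoke MacPherson's \emph{graph construction}. Given a generic section of the relevant bundle pulled back to the Nash blowup, one forms a family of cycles in a Grassmann bundle parametrized by $\PP^1$, interpolating between the graph of the section and a limit cycle that decomposes into a ``global'' component computing $p_*\,c_M(V)$ and ``singular'' components supported over the locus where the fibrewise Euler characteristic jumps. Comparing the two specializations via the conservation of rational equivalence yields precisely the identity $f_*\,c_M(V)=c_{SM}(f_*\mathrm{Eu}_V)$. The delicate local analysis of these limit cycles—showing that their multiplicities reproduce exactly the Euler-characteristic weights in $f_*\mathrm{Eu}_V$—is where essentially all the difficulty lies; once it is in place, the basis property and the uniqueness argument assemble the two halves into the unique natural transformation $c_{SM}$ of the theorem.
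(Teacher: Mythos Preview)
The paper does not actually prove this theorem: it is stated as a cited result from MacPherson \cite{Ma}, accompanied only by a brief remark that uniqueness follows from resolution of singularities and that, although MacPherson worked in homology, his argument yields the Chow-group statement (with a reference to \cite[19.1.7]{Fu}). Your proposal is therefore not so much a different route as a substantial expansion beyond what the paper provides.

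That said, your outline is a faithful and correct sketch of MacPherson's original strategy: the uniqueness argument via resolution and induction on dimension is exactly what the paper's remark alludes to, and the existence construction through the local Euler obstruction, Chern--Mather classes, and the graph construction is MacPherson's method. One point worth flagging, which the paper's remark also raises, is that MacPherson's original argument is carried out in Borel--Moore homology rather than in Chow groups; the passage to $A_*$ is not entirely automatic and is the content of Fulton's treatment. Your sketch does not address this transition, so if you intend this as a self-contained proof of the Chow-group statement you should either indicate how the graph-construction argument lifts to rational equivalence or cite Fulton for that step.
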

\begin{remark}
The uniqueness of such a natural transformation follows from resolution of singularities. MacPherson proved the naturality of the homology class, but in fact his argument gives this stronger result (see \cite[19.1.7]{Fu}).
\end{remark}

\begin{definition}
For a complete scheme $X$ and a constructible function $f\in F(X)$, we call $c_{SM}(f)=c_{SM}(X)(f)$ the \emph{Chern-Schwartz-MacPherson class} of $f$. We also write
\[c_{SM}(X)=c_{SM}(1_X)\]
and call this class the Chern-Schwartz-MacPherson class of $X$.
\end{definition}

It follows directly from the definitions that for a complete scheme $X$, we have
\[\int c_{SM}(X) = \pi_*(1_X) = e(X)\]
with $\pi:X\rightarrow \{*\}$ the morphims to a point. At the other extreme, we have the following lemma.

\begin{lemma}\label{smtop}
Let $X$ be a scheme and let $V\subset X$ be a locally closed subset of dimension $n$ and let $\overline{V}$ be its closure (with the reduced scheme structure) . Then
\[c_{SM}(1_V) = [\overline{V}] + \mbox{cycles of dimension}<n\,.\]
\end{lemma}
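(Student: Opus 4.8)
The plan is to reduce the statement to MacPherson's normalization $c_{SM}(1_Y)=c(T_Y)$ for $Y$ smooth projective (Theorem \ref{MP}), using naturality of $c_{SM}$ and resolution of singularities. Since the Chern--Schwartz--MacPherson transformation is defined on complete schemes, I take $X$ complete; as $c_{SM}(1_V)$ is computed in $A_*(\overline V)$ and then pushed forward along the closed immersion $\overline V\hookrightarrow X$, which preserves the dimension grading of cycles, I may assume $X=\overline V$. The one fact I will use repeatedly is: if $f\in F(X)$ is supported on a closed subset $Z$ with $\dim Z\le m$, then $c_{SM}(f)$ has no component of dimension $>m$. This is immediate from naturality applied to $Z\hookrightarrow X$, since $c_{SM}$ of a constructible function on $Z$ lies in $A_*(Z)=\bigoplus_{k\le m}A_k(Z)$.

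Next I would reduce to the case $V$ irreducible. Let $\overline V_1,\dots,\overline V_k$ be the irreducible components of $\overline V$ and set $V_i=\overline V_i\cap V$; each $V_i$ is locally closed, dense in $\overline V_i$, with $\dim V_i=\dim\overline V_i$. Inclusion--exclusion for constructible functions gives $1_V=\sum_{\varnothing\neq S\subseteq\{1,\dots,k\}}(-1)^{|S|+1}1_{\bigcap_{i\in S}V_i}$, and every term with $|S|\ge 2$ is supported on a subset of some intersection $\overline V_i\cap\overline V_{i'}$ with $i\neq i'$, a closed set of dimension $<n$. By the observation above, $c_{SM}(1_V)=\sum_{i=1}^k c_{SM}(1_{V_i})+(\text{cycles of dimension}<n)$; the terms with $\dim\overline V_i<n$ contribute only cycles of dimension $<n$, so it remains to treat $V$ irreducible with $\overline V$ an $n$-dimensional variety, where the claimed answer is $[\overline V]+(\text{cycles of dimension}<n)$.

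For irreducible $V$, I would choose by resolution of singularities a proper birational morphism $\pi\colon\widetilde V\to\overline V$ from a smooth projective variety that is an isomorphism over a dense open $U\subseteq V$. By Theorem \ref{MP}, $c_{SM}(1_{\widetilde V})=c(T_{\widetilde V})\cap[\widetilde V]=[\widetilde V]+(\text{cycles of dimension}<n)$. Applying $\pi_*$ and naturality, $c_{SM}(\pi_*1_{\widetilde V})=\pi_*[\widetilde V]+(\text{cycles of dimension}<n)=[\overline V]+(\text{cycles of dimension}<n)$, as $\pi$ is birational onto $\overline V$ and proper pushforward does not raise dimension. On the other hand $\pi_*1_{\widetilde V}=1_{\overline V}+g$, where $g(v)=e(\pi^{-1}(v))-1$ vanishes on $U$ and hence is supported on a closed subset of $\overline V$ of dimension $<n$, so $c_{SM}(g)$ has dimension $<n$ by the observation. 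Combining, $c_{SM}(1_{\overline V})=[\overline V]+(\text{cycles of dimension}<n)$; finally $c_{SM}(1_V)=c_{SM}(1_{\overline V})-c_{SM}(1_{\overline V\setminus V})$ with $\overline V\setminus V$ closed of dimension $<n$ (since $V$ is dense open in the irreducible $\overline V$), which yields the result.

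The only non-formal ingredient is the existence of the resolution $\pi$; everything else is naturality of $c_{SM}$ together with the elementary fact that closed immersions and proper pushforwards do not raise cycle dimension. The point needing the most care is tracking where the correction terms $g$, the $\bigcap_{|S|\ge 2}V_i$, and $\overline V\setminus V$ are supported, so that each is seen to contribute only in dimension $<n$; no induction is actually required, since the observation in the first paragraph already disposes of all correction terms.
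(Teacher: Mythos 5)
Your proof is correct and follows essentially the same route as the paper's: resolve $\overline V$ by a smooth projective variety, use MacPherson's normalization $c_{SM}(1_{\widetilde V})=c(T_{\widetilde V})=[\widetilde V]+(\text{lower})$, push forward, and absorb all correction terms via the observation that a constructible function supported on a closed set of dimension $<n$ has $c_{SM}$ concentrated in dimension $<n$. Your explicit inclusion--exclusion reduction to irreducible $\overline V$ is a minor tidying of a point the paper leaves implicit, not a different argument.
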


\begin{proof}
Let $g\colon\widetilde{V}\rightarrow \overline{V}$ be a birational morphism from a nonsingular projective variety $\widetilde{V}$. Let $U\subset \overline{V}$ be a dense open over which $g$ is an isomorphism and let $Z=\overline{V}\backslash U$ be its complement. Write
\[\partial V = \overline{V}\backslash V\]
for the boundary of $V$. Then the we have
\begin{align*}
g_*1_{\widetilde{V}}		&= 1_{\overline{V}}+ f\\
						&= 1_V + 1_{\partial V}+ f
\end{align*}
with $f$ a constructible function on $Z$. On the other hand, we have
\[c_{SM}(1_{\widetilde{V}})=c(T_{\widetilde{V}}) = [\widetilde{V}]+ \mbox{cycles of dimension}<n\,.\]
Since the functions $f$ and $1_{\partial V}$ are supported on closed subsets of dimension $<n$, by naturality the same holds for their Chern-Schwartz-Macpherson classes. It follows that we have
\begin{align*}
c_{SM}(1_V)	&= g_*c_{SM}(\widetilde{V})-c_{SM}(f)-c_{SM}(1_{\partial V})\\
			&= [\overline{V}] + \mbox{cycles of dimension}<n\,.\qedhere
\end{align*}
\end{proof}

\subsection{Hilbert schemes of points}\label{Hilb}
Let $S$ be a smooth projective surface, and let $S^{[n]}$ be the Hilbert scheme of $n$ points on $S$. Let $\Z$ be the universal subscheme of length $n$, with natural morphisms
\[
\begin{tikzcd}
\Z\arrow[r,"i"]\arrow[d,"\pi"]& S\\
S^{[n]} &
\end{tikzcd}
\]
For a vector bundle $F$ on $S$, we write
$F^{[n]}\coloneqq \pi_*i^*F$
for the tautological vector bundle on $S^{[n]}$ with fibre
\[\left.F^{[n]}\right|_{[Z]} = H^0(Z,F|_Z)\]
over a point $[Z]\in S^{[n]}$.

In \cite{EGL}, Ellingsrud, G\"ottsche and Lehn describe a method to calculate certain tautological integrals on $S^{[n]}$. In fact, they give a constructive proof of the following theorem:

\begin{theorem}[Ellingsrud, G\"ottsche, Lehn]\label{EGL}
Let $F_1,\ldots,F_l$ be vector bundles on $S$ of respective ranks $r_1,\ldots,r_l$. Let $P$ be a polynomial in the Chern classes of $T_{S^{[n]}}$ and the Chern classes of the bundles $F_i^{[n]}$. Then there is a universal polynomial $Q$, depending only on $P$, in numbers
\[\int_S p(T_S,F_1,\ldots,F_l)\,,\]
in which $p$ is a polynomial in the Chern classes of $T_S$, the ranks $r_i$ and the Chern classes of the bundles $F_i$, such that
\[\int_{S^{[n]}} P = Q\,.\]
\end{theorem}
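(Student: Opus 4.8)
The plan is to prove Theorem \ref{EGL} by induction on $n$, following Ellingsrud, Göttsche and Lehn, via the nested Hilbert scheme $S^{[n,n+1]}=\hilb^{n,n+1}(S)$ parametrising flags $Z\subset Z'$ of subschemes of $S$ with $\mathrm{len}(Z)=n$ and $\mathrm{len}(Z')=n+1$. We may assume $S$ connected. This scheme carries three natural morphisms: $\psi\colon S^{[n,n+1]}\to S^{[n]}$, $(Z\subset Z')\mapsto Z$; $\phi\colon S^{[n,n+1]}\to S^{[n+1]}$, $(Z\subset Z')\mapsto Z'$; and the residual-point map $\rho\colon S^{[n,n+1]}\to S$ sending a flag to the support of the length-one sheaf $\mathcal I_Z/\mathcal I_{Z'}$. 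The map $\phi$ is proper and generically finite of degree $n+1$ (a general length-$(n+1)$ subscheme consists of $n+1$ reduced points, and deleting one of them gives $n+1$ subflags), so $\phi_*[S^{[n,n+1]}]=(n+1)[S^{[n+1]}]$ and hence $(n+1)\int_{S^{[n+1]}}P=\int_{S^{[n,n+1]}}\phi^*P$ for any class $P$ of top degree. The strategy is to rewrite $\phi^*P$ entirely in terms of classes pulled back along $\psi$ and $\rho$ together with one tautological divisor class, push the result down to $S^{[n]}\times S$, split the resulting integral by the Künneth decomposition $H^*(S^{[n]}\times S)\cong H^*(S^{[n]})\otimes H^*(S)$ (passing to cohomology does not change degrees of top classes), and invoke the inductive hypothesis on the $S^{[n]}$-factor.

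The geometric input is the structure result of Ellingsrud--Göttsche--Lehn: via $b\coloneqq\psi\times\rho$, the scheme $S^{[n,n+1]}$ is the blow-up of $S^{[n]}\times S$ along the universal subscheme $\Z_n\subset S^{[n]}\times S$; in particular it is smooth of dimension $2n+2$, with exceptional divisor $E$ a $\PP^1$-bundle over $\Z_n$. Let $L$ be the tautological line bundle on $S^{[n,n+1]}$ whose fibre over $(Z\subset Z')$ is the one-dimensional space of functions on $Z'$ vanishing on $Z$; its first Chern class is a combination of $b^*$-classes and $[E]$. For a vector bundle $F$ on $S$ one then has the short exact sequence $0\to\rho^*F\otimes L\to\phi^*F^{[n+1]}\to\psi^*F^{[n]}\to 0$, and a corresponding Grothendieck-group identity expressing $\phi^*T_{S^{[n+1]}}$ in terms of $\psi^*T_{S^{[n]}}$, $\rho^*T_S$ and $L$. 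Substituting these into $P$ turns $\phi^*P$ into a universal polynomial in $c(\psi^*T_{S^{[n]}})$, the $c(\psi^*F_i^{[n]})$, $\rho^*c(T_S)$, the $\rho^*c(F_i)$, and $[E]$.

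Next I would push $\int_{S^{[n,n+1]}}\phi^*P$ forward along $b$. Using the projection formula together with the standard formulas for $b_*$ of powers of $[E]$ — equivalently, pushing monomials in $[E]$ down to $E$ and then along the $\PP^1$-bundle $E\to\Z_n$, which introduces Segre classes — and finally pushing $\Z_n$-classes down to $S^{[n]}$ along the finite flat degree-$n$ map $\Z_n\to S^{[n]}$, one expresses $(n+1)\int_{S^{[n+1]}}P$ as a finite sum $\sum_\alpha c_\alpha\,\big(\int_{S^{[n]}}A_\alpha\big)\big(\int_S B_\alpha\big)$, where the $c_\alpha$ are universal rational constants, each $A_\alpha$ is a universal polynomial in $c(T_{S^{[n]}})$ and the $c(F_i^{[n]})$, and each $B_\alpha$ is a polynomial in the Chern classes of $T_S$ and the $F_i$ and the ranks $r_i$. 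By the inductive hypothesis $\int_{S^{[n]}}A_\alpha$ is a universal polynomial in the numbers $\int_S p(T_S,F_1,\dots,F_l)$, and $\int_S B_\alpha$ is manifestly of that form; dividing by $n+1$, so is $\int_{S^{[n+1]}}P$. The base case is immediate: $S^{[0]}$ is a point, and $S^{[1]}=S$ with $T_{S^{[1]}}=T_S$ and $F_i^{[1]}=F_i$.

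The main obstacle is the geometric input of the second paragraph: establishing that $S^{[n,n+1]}$ is smooth and identifying it with the blow-up of $S^{[n]}\times S$ along $\Z_n$, and especially deriving the Grothendieck-group identity for $\phi^*T_{S^{[n+1]}}$. The latter cannot simply be quoted from the standard blow-up tangent sequence, because $\Z_n$ is not regularly embedded in $S^{[n]}\times S$ for $n\geq 2$; it requires a direct deformation-theoretic analysis of the flag Hilbert scheme. Once these identities are in hand, the remainder is a bookkeeping exercise with the projection formula and the Künneth isomorphism, and the universality of the resulting polynomial $Q$ in $P$ is automatic from the construction.
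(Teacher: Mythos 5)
The paper itself does not prove this statement --- it is quoted from \cite{EGL} --- but the proof of the relative version, Theorem \ref{EGLrel}, records the shape of the actual EGL induction, and comparing your argument with it exposes a genuine gap. Your geometric input is the right engine and matches \cite{EGL}: the incidence variety $S^{[n,n+1]}$, its identification with the blow-up $b\colon S^{[n,n+1]}\to S^{[n]}\times S$ along the universal subscheme $\Z$, the exact sequence $0\to\rho^*F\otimes L\to\phi^*F^{[n+1]}\to\psi^*F^{[n]}\to0$, and the (genuinely delicate) $K$-theoretic identity for $\phi^*T_{S^{[n+1]}}$. The problem is the bookkeeping in your third paragraph: the induction you set up does not close. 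Writing $\phi^*P=\sum_k b^*(\alpha_k)\cdot[E]^k$ with each $\alpha_k$ a product of classes pulled back from the two factors of $S^{[n]}\times S$, the projection formula gives $\int\phi^*P=\int\alpha_0+\sum_{k\geq1}\int_{\Z}j^*\alpha_k\cdot s_k$, where $j\colon\Z\hookrightarrow S^{[n]}\times S$ is the (non-regularly-embedded) centre and the $s_k$ are Segre-type classes coming from $b_*[E]^k$. The $\alpha_0$ term splits by Fubini exactly as you say. But the remaining terms, pushed to $S^{[n]}$ along the finite degree-$n$ map $\pi_n\colon\Z\to S^{[n]}$, become classes such as $\pi_{n*}\bigl(\rho_n^*c(F)\cdot j^*s_k\bigr)$, which are \emph{not} polynomials in $c(T_{S^{[n]}})$ and the $c(F_i^{[n]})$, so your inductive hypothesis does not apply to them; nor do they factor as $\bigl(\int_{S^{[n]}}A_\alpha\bigr)\bigl(\int_SB_\alpha\bigr)$, because $\Z$ couples the two factors --- its K\"unneth components depend on a choice of basis of $H^*(S)$ and are not universal in $S$.

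This is precisely why Proposition 3.1 of \cite{EGL} is formulated for products $S^{[n]}\times S^{m}$ with an enlarged class of integrands: Chern classes of $T_{S^{[n]}}$ and of the tautological bundles, Chern classes pulled back from the $S$-factors, and the classes of the universal subscheme and of the diagonals, which record the coupling your splitting discards. Each step of the induction then trades one point of the Hilbert scheme for one extra factor of $S$, and only at $n=0$ does one reach an integral over $S^{N}$ of a polynomial in pullback classes and diagonal classes; these are finally eliminated using $ch(\O_\Delta)=\Delta_*(td(-T_S))$ and the excess intersection formula, together with $\int_{S^m}pr_1^*\beta_1\cdots pr_m^*\beta_m=\prod_i\int_S\beta_i$ --- exactly the steps written out in the paper's proof of Theorem \ref{EGLrel}. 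To repair your argument you must strengthen the inductive statement in the same way; as written, the passage from $S^{[n+1]}$ to $S^{[n]}$ produces integrands outside the class covered by your induction hypothesis.
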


Now let $B$ be a base scheme and let $q\colon\S\rightarrow B$ be proper and smooth of relative dimension $2$. Let $\S_B^{[i]}=\hilb^i(\S/B)$ denote the relative Hilbert scheme of $i$ points on the fibres of $\S\rightarrow B$, with structure morphism $q^{[i]}\colon\S_B^{[i]}\rightarrow B$. For a vector bundle $\F$ on $\S$ we can define as above the tautological bundle $\F_B^{[i]} = \pi_*i^*\F$ on $\S_B^{[i]}$, in which $\pi$ and $i$ are the natural morphisms in
\[
\begin{tikzcd}
\Z\arrow[r,"i"]\arrow[d,"\pi"]& \S\arrow[d, "q"]\\
\S_B^{[i]} \arrow[r,"q^{[i]}"] & B
\end{tikzcd}
\]
from the universal length $n$ subscheme $\Z$ on the fibres of $\S\rightarrow B$.
It restricts to
\[\left.\F_B^{[i]}\right|_{\S_b^{[i]}} = (\F|_{\S_b})^{[i]}\]
on the fibre $\S_b^{[i]}$ over a point $b\in B$. 

The following is proved in \cite{AIK} and generalizes a well known result by Fogarty:

\begin{lemma}\label{vlak}
The morphism $q^{[i]}\colon\S_B^{[i]}\rightarrow B$ is smooth of relative dimension $2i$.
\end{lemma}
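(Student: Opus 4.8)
The plan is to identify, at every closed point $[Z]$ of $\S_B^{[i]}$ lying over $b\in B$, the completed local ring $\hat{\O}_{\S_B^{[i]},[Z]}$ with a formal power series ring in $2i$ variables over $\hat{\O}_{B,b}$. Since $\S\to B$ is projective, $q^{[i]}$ is projective, hence locally of finite presentation; so this identification shows that $q^{[i]}$ is flat at $[Z]$ with a smooth fibre of dimension $2i$ there, hence smooth of relative dimension $2i$ in a neighbourhood of $[Z]$. Checking at closed points is enough, as everything in sight is of finite type over $\CC$.

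I would first reduce to the case that $Z$ is supported at a single point $z$ of $\S_b$. If $Z=\coprod_{j=1}^r Z_j$ with $Z_j$ of length $i_j$ supported at distinct points $z_j\in\S_b$, then forming disjoint unions identifies an open neighbourhood of $([Z_1],\dots,[Z_r])$ in $\S_B^{[i_1]}\times_B\cdots\times_B\S_B^{[i_r]}$ with one of $[Z]$ in $\S_B^{[i]}$; hence $\hat{\O}_{\S_B^{[i]},[Z]}$ is the completed tensor product over $\hat{\O}_{B,b}$ of the rings $\hat{\O}_{\S_B^{[i_j]},[Z_j]}$. As a completed tensor product over $\hat{\O}_{B,b}$ of power series rings over $\hat{\O}_{B,b}$ is again such a ring, with the number of variables adding to $\sum_j 2i_j=2i$, it suffices to treat one factor.

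Now let $Z$ be supported at $z$ and put $\Lambda=\hat{\O}_{B,b}$. Smoothness of $\S\to B$ of relative dimension $2$ gives $\hat{\O}_{\S,z}\cong\Lambda[[x,y]]$, so for an Artinian local $\Lambda$-algebra $A$ (with residue field $\CC$) a finite subscheme of $\S\times_B\spec A$ supported at $z$, flat over $A$, and reducing to $Z$, is precisely a colength-$i$ ideal $I\subset A[[x,y]]$ with $A[[x,y]]/I$ flat over $A$ and reducing to $I_Z$ — a datum that makes no reference to the $\Lambda$-algebra structure on $A$. Hence the deformation functor of $Z\subset\S$ over $B$ is the restriction, along ``forget the $\Lambda$-structure'', of the deformation functor of the point $[Z]\in(\mathbb{A}^2)^{[i]}$ over $\CC$; the latter is pro-represented by $R\coloneqq\hat{\O}_{(\mathbb{A}^2)^{[i]},[Z]}$, so the former is pro-represented by $R\,\hat{\otimes}_\CC\Lambda$. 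By Fogarty's theorem — applied to the smooth projective surface $\PP^2$, of which $\mathbb{A}^2$ is an open — $(\mathbb{A}^2)^{[i]}$ is smooth of dimension $2i$, so $R\cong\CC[[t_1,\dots,t_{2i}]]$ and $\hat{\O}_{\S_B^{[i]},[Z]}\cong R\,\hat{\otimes}_\CC\Lambda\cong\Lambda[[t_1,\dots,t_{2i}]]$, as desired.

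The delicate point is the identification of the relative deformation functor with the base change to $\Lambda$ of the absolute punctual Hilbert functor of $\mathbb{A}^2$; this is exactly where the smoothness of $\S\to B$ of relative dimension $2$ and the zero-dimensionality of $Z$ (so that $Z$ lives in a formal polydisc) are used, and it amounts to the statement that the Hilbert scheme of points is étale-local on the family of surfaces. Equivalently, one can deduce the flatness of $q^{[i]}$ by the same reduction to the trivial family $\mathbb{A}^2_B=\mathbb{A}^2_\CC\times_\CC B$ — where flatness is obvious — and then invoke that a flat, locally finitely presented morphism whose fibres are smooth of dimension $2i$ is smooth of relative dimension $2i$. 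This lemma is classical; in the stated relative form it is \cite{AIK}, generalizing Fogarty, and one may simply cite it.
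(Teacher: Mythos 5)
Your proof is correct. Note that the paper itself gives no argument for this lemma: it is stated with a bare citation to \cite{AIK} (``proved in [AIK] and generalizes a well known result by Fogarty''), an option you also acknowledge in your last sentence. What you supply is the standard self-contained proof: reduce to closed points, split off the punctual pieces via the local product decomposition of the Hilbert scheme along the support, and then use formal smoothness of $\S\rightarrow B$ to identify the relative punctual deformation functor with the base change to $\hat{\O}_{B,b}$ of the absolute one for $\mathbb{A}^2$, which is pro-represented by a power series ring in $2i$ variables by Fogarty. The two mildly compressed steps both check out: (i) a flat deformation of $Z$ over an Artinian base is automatically supported on the support of $Z$ (Nakayama on the stalks of the finite flat family), which is what lets you replace the Hilbert functor by ideals in $A[[x,y]]$ and justifies the ``formal polydisc'' reduction; and (ii) an isomorphism $\hat{\O}_{\S_B^{[i]},[Z]}\cong\hat{\O}_{B,b}[[t_1,\dots,t_{2i}]]$ of $\hat{\O}_{B,b}$-algebras at every closed point does give smoothness, since completion is faithfully flat (so $q^{[i]}$ is flat at $[Z]$), the fibres are then regular hence smooth over $\CC$ at all closed points, and the smooth locus is open in a Jacobson scheme. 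So your argument is a legitimate replacement for the citation; its only cost is the routine pro-representability bookkeeping, and what it buys is independence from \cite{AIK}.
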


Now let $\S\rightarrow B$ be given as above, and let $\C\subset \S$ be a relative effective divisor.

\begin{lemma}\label{lci}
The Hilbert scheme $\C_B^{[i]}$ is cut out transversely, i.e., in the expected codimension $i$, and regularly by the canonical section of the vector bundle $\O(\C)_B^{[i]}$ on $\S_B^{[i]}$. Moreover, the morphism $\C_B^{[i]}\rightarrow B$ is flat.
\end{lemma}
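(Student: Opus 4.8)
The plan is to reduce everything to a statement about the fibres $\S_b^{[i]}$ over closed points $b\in B$, where the corresponding assertion is classical, and then propagate it to the relative setting. First I would recall the fibrewise picture: for a smooth projective surface $S$ and an effective divisor $D\subset S$, the Hilbert scheme $D^{[i]}$ is the zero locus of the tautological section $s_D$ of $\O(D)^{[i]}$ on $S^{[i]}$, the section whose value at $[Z]$ is the image of the defining equation of $D$ under $H^0(S,\O(D))\to H^0(Z,\O(D)|_Z)$. Since $\dim S^{[i]}=2i$ and $\operatorname{rk}\O(D)^{[i]}=i$, one must check that $Z(s_D)=D^{[i]}$ has codimension exactly $i$ everywhere and that $s_D$ is a regular section there, equivalently that the Koszul complex on $s_D$ is a resolution of $\O_{D^{[i]}}$; this is a well-known fact (it underlies the G\"ottsche-conjecture computations, e.g.\ in \cite{KST}), and it follows once one knows $D^{[i]}$ is Cohen-Macaulay of the expected dimension $i$, which in turn follows because $D^{[i]}$ is a bundle of punctual Hilbert schemes over the symmetric product of $D$ and a planar curve has Cohen-Macaulay Hilbert schemes of points of dimension $i$ (Arbarello-Cornalba-type arguments, or directly \cite{AIK}).

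Next I would globalize. By Lemma \ref{vlak}, $q^{[i]}\colon\S_B^{[i]}\to B$ is smooth of relative dimension $2i$, so $\S_B^{[i]}$ is a nice ambient space; the canonical section $s_\C$ of $\O(\C)_B^{[i]}$ restricts on each fibre to the section $s_{\C_b}$ just discussed, with zero locus $\C_B^{[i]}$. The key point is that transversality and regularity can be checked on fibres: if $Z(s_\C)\cap\S_b^{[i]}=\C_b^{[i]}$ has codimension $i$ in $\S_b^{[i]}$ for every $b$, then, since $\S_B^{[i]}\to B$ is flat (indeed smooth) and $Z(s_\C)$ is cut out by $i$ equations, a dimension count shows $\C_B^{[i]}$ has codimension exactly $i$ in $\S_B^{[i]}$, hence $s_\C$ is a regular section and its Koszul complex resolves $\O_{\C_B^{[i]}}$. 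For the flatness of $\C_B^{[i]}\to B$, I would argue that $\C_B^{[i]}\to B$ is the composition $\C_B^{[i]}\hookrightarrow\S_B^{[i]}\to B$, and use the fibrewise criterion for flatness (\cite[Ch.\ III, Thm.\ 9.9]{Hartshorne} or the local criterion): $\S_B^{[i]}\to B$ is flat, the fibres $\C_b^{[i]}$ all have the same dimension $i$ and are Cohen-Macaulay, and $\C_B^{[i]}$ itself is Cohen-Macaulay since it is a local complete intersection in a smooth scheme; a Cohen-Macaulay scheme mapping to a regular (or just CM) base with equidimensional CM fibres of constant dimension is flat (``miracle flatness''). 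Alternatively, and perhaps more cleanly, flatness of $\C_B^{[i]}\to B$ follows directly from the Koszul resolution: $\O_{\C_B^{[i]}}$ has a finite resolution by bundles $\Lambda^\bullet(\O(\C)_B^{[i]})^\vee$ that are flat over $B$, and the resolution stays exact after restriction to any fibre precisely because the section is regular on every fibre.

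The main obstacle I expect is the fibrewise transversality/regularity statement — i.e., that $D^{[i]}\subset S^{[i]}$ genuinely has the expected codimension $i$ and no embedded or excess components — since $D$ may be an arbitrary (possibly non-reduced, reducible, singular) effective divisor, not a smooth curve. The resolution is to note that this is local on $S$ around the points of $Z$, hence reduces to a statement about the Hilbert scheme of points of a planar curve germ, for which one invokes the Cohen-Macaulayness and dimension computation of curvilinear/punctual Hilbert schemes from \cite{AIK} (or the structure of $D^{[i]}$ as fibred over $\operatorname{Sym}^i D$). Once that input is in hand, everything else is bookkeeping with Koszul complexes and the fibrewise flatness criterion, and the two halves of the lemma — the transverse regular cutting-out and the flatness of $\C_B^{[i]}\to B$ — come out together.
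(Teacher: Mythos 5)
Your proposal is essentially the paper's argument: the paper also reduces to the fibres $C=\C_b$, $S=\S_b$ (packaging the relative step as a single citation of EGA~IV, \S11.3.8, which is exactly the fibrewise Koszul-exactness criterion you spell out), and then establishes the only nontrivial point --- that $C^{[i]}$ has the expected dimension $i$ --- by the Hilbert--Chow morphism $C^{[i]}\to C^{(i)}$ together with Iarrobino's bound $i-1$ on the dimension of the punctual Hilbert scheme, which is the same stratification idea behind your ``fibred over $\sym^i D$'' remark; note that regularity and Cohen--Macaulayness then come for free from the dimension count, since $S^{[i]}$ is smooth and the section has rank equal to the codimension, so you do not need CM-ness as an input.

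One caveat: your ``miracle flatness'' branch does not apply here, since $B$ is an arbitrary scheme (not assumed regular, nor is $\S_B^{[i]}$ smooth or CM in the absolute sense when $B$ is singular, so $\C_B^{[i]}$ need not be a local complete intersection in a smooth scheme). Your alternative argument --- the Koszul complex of the canonical section is a complex of $B$-flat sheaves that is exact on every fibre, hence resolves $\O_{\C_B^{[i]}}$ and exhibits it as flat over $B$ --- is the correct route and is precisely the content of the EGA reference the paper uses; you should discard the miracle-flatness version and keep only that one.
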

\begin{proof}
We follow \cite{AIK}. Let $b\in B$ be an arbitrary point and consider the fibres $C=\C_b$ and $S=\S_b$ over $b$. By \cite[§11.3.8]{EGA43}, it suffices to check that the Hilbert scheme $C^{[i]}$ is cut transversely and regularly by the canonical section of the bundle $\O(C)^{[i]}$ on $S^{[i]}$. Since $S^{[i]}$ is smooth, we only need to check that for a divisor $C\subset S$ on a smooth surface $S$, the Hilbert scheme $C^{[i]}$ has the expected dimension $i$. But this can be seen by inspection of the fibres of the Hilbert-Chow morphism $C^{[i]}\rightarrow C^{(i)}$. In fact, by \cite{Ia}, the locus in $S^{[i]}$ of subschemes of length $i$ supported at a point has dimension $i-1$. From this it follows directly that the locus in $C^{(i)}$ over which the fibres of the morphism $C^{[i]}\rightarrow C^{(i)}$ have dimension $r$, has codimension $\geq r$.
\end{proof}


\section{The smooth case}\label{relkst}
Let $B$ be a scheme. Let $\S\rightarrow B$ be smooth projective of relative dimension $2$ and let $\C\subset \S$ be a relative effective divisor. For a fixed $\delta$, let
\[n^\vir_{g-\delta}=n^\vir_{g-\delta}(\C)\in A^*(B)\]
be the class defined by equation \eqref{vir} in the introduction. The following situation is the model for our results.
\begin{proposition}\label{defect}
Assume that $B$ is \emph{projective} and that the relative Hilbert schemes of points $\C_B^{[i]}$ for $i=0,\ldots,\delta$ are non-singular. (So in particular $B$ is non-singular). Then we have the following identity in $A_*(B)$:
\begin{equation}\label{eqDefect}
c(T_B) \, n_{g-\delta}^{\vir}\cap[B] = c_{\SM}(n_{g-\delta})\, .
\end{equation}
In particular, the class $n_{g-\delta}^{\vir}\cap[B]$ is supported on the locus of curves with $\delta$-invariant $\geq\delta$, i.e. it is the push forward of a class on this locus. If $B$ is of pure dimension $n$ and the family of curves $\C\rightarrow B$ satisfies \dimb, we find
\[n_{g-\delta}^{\vir} \cap [B]= \left[\overline{B(\delta)}\right] + \beta\]
with $\beta$ a sum of cycles of dimension $<n-\delta$.
\end{proposition}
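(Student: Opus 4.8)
The plan is to first establish the ring identity \eqref{eqDefect} and then read off the two geometric consequences. The key point is that both defining identities \eqref{top} and \eqref{vir} apply the \emph{same} universal $\ZZ$-linear transformation to the respective sequences $\big(p_*^{[i]}(1_{\C_B^{[i]}})\big)_{i\ge0}$ in $F(B)$ and $\big(p_*^{[i]} c(T_{\C_B^{[i]}/B})\big)_{i\ge0}$ in $A^*(B)$: writing out the right-hand side of \eqref{top}, the coefficient of $q^k$ is an integral combination of $n_g,\dots,n_{g-k}$ in which $n_{g-k}$ occurs with coefficient $1$, so inverting this triangular system expresses $n_{g-\delta}$ as a fixed integral combination $\sum_{i=0}^{\delta} d_i\,p_*^{[i]}(1_{\C_B^{[i]}})$ with $d_\delta=1$, and the very same combination computes $n_{g-\delta}^{\vir}=\sum_{i=0}^{\delta} d_i\,p_*^{[i]} c(T_{\C_B^{[i]}/B})$. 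Since $c_{\SM}$ is $\ZZ$-linear, \eqref{eqDefect} will follow once we prove, for $0\le i\le\delta$,
\[c(T_B)\cap\big(p_*^{[i]} c(T_{\C_B^{[i]}/B})\cap[B]\big)=c_{\SM}\big(p_*^{[i]}(1_{\C_B^{[i]}})\big)\,;\]
this is exactly where the hypothesis that $\C_B^{[i]}$ be non-singular for $i\le\delta$ is used, and why only these finitely many Hilbert schemes enter.

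To prove this identity for fixed $i$: the scheme $\C_B^{[i]}$ is non-singular by hypothesis and projective (being projective over the projective base $B$), so by Theorem \ref{MP} and the naturality of $c_{\SM}$ under the proper morphism $p^{[i]}$ one has $c_{\SM}\big(p_*^{[i]}(1_{\C_B^{[i]}})\big)=p_*^{[i]}\big(c(T_{\C_B^{[i]}})\cap[\C_B^{[i]}]\big)$. Factoring $p^{[i]}$ through the regular embedding $\C_B^{[i]}\hookrightarrow\S_B^{[i]}$ of Lemma \ref{lci}, whose conormal bundle is $\O(\C)_B^{[i]}|_{\C_B^{[i]}}$, and using the smoothness of $\S_B^{[i]}\to B$ from Lemma \ref{vlak}, one obtains $[T_{\C_B^{[i]}}]=[(p^{[i]})^* T_B]+[T_{\C_B^{[i]}/B}]$ in $K(\C_B^{[i]})$, with $T_{\C_B^{[i]}/B}$ the virtual tangent bundle of the introduction; hence $c(T_{\C_B^{[i]}})=(p^{[i]})^* c(T_B)\cdot c(T_{\C_B^{[i]}/B})$. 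The projection formula for the Gysin push-forward then gives $p_*^{[i]}\big(c(T_{\C_B^{[i]}})\cap[\C_B^{[i]}]\big)=c(T_B)\cap p_*^{[i]}\big(c(T_{\C_B^{[i]}/B})\cap[\C_B^{[i]}]\big)$, which is the left-hand side above. This establishes \eqref{eqDefect}.

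The two consequences are then formal. Since $B$ is non-singular, $c(T_B)$ is a unit in $A^*(B)$, so $n_{g-\delta}^{\vir}\cap[B]=c(T_B)^{-1}\cap c_{\SM}(n_{g-\delta})$, and $c(T_B)^{-1}=1-c_1(T_B)+\cdots$ acts on $A_*(B)$ as the identity plus a dimension-strictly-decreasing operator. By Theorem \ref{PT} the function $n_{g-\delta}$ vanishes on fibres of $\delta$-invariant $<\delta$, hence is supported on the closed locus $\{b:\delta(\C_b)\ge\delta\}$ (upper semicontinuity of the $\delta$-invariant); by naturality of $c_{\SM}$ applied to the inclusion of this locus, $c_{\SM}(n_{g-\delta})$, and therefore $n_{g-\delta}^{\vir}\cap[B]$, is the push-forward of a class supported there. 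Finally, assume $B$ of pure dimension $n$ and $\C\to B$ satisfying \dimb. On a $\delta$-nodal fibre $n_{g-\delta,\C_b}=1$, so $n_{g-\delta}-1_{B(\delta)}$ is supported on the union of the loci of curves with $\delta$-invariant $>\delta$, of curves with $\delta$-invariant $=\delta$ carrying a singularity other than nodes, and of non-reduced curves --- each of codimension $>\delta$ by \dimb\ --- so its Chern-Schwartz-MacPherson class has dimension $<n-\delta$, while by Lemma \ref{smtop} $c_{\SM}(1_{B(\delta)})=[\overline{B(\delta)}]+(\mbox{cycles of dimension}<n-\delta)$. Adding these, $c_{\SM}(n_{g-\delta})=[\overline{B(\delta)}]+(\mbox{cycles of dimension}<n-\delta)$, and applying $c(T_B)^{-1}\cap(-)$ only adds further cycles of dimension $<n-\delta$; hence $n_{g-\delta}^{\vir}\cap[B]=[\overline{B(\delta)}]+\beta$ with $\dim\beta<n-\delta$.

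The step I expect to be the main obstacle is the per-$i$ identity displayed above --- the passage from the ``topological'' Chern-Schwartz-MacPherson class of $p_*^{[i]}1_{\C_B^{[i]}}$ to the ``virtual'' total Chern class of the relative tangent bundle. Everything there rests on $\C_B^{[i]}$ being smooth, so that $c_{\SM}(1_{\C_B^{[i]}})=c(T_{\C_B^{[i]}})\cap[\C_B^{[i]}]$ by MacPherson's theorem, together with the $K$-theoretic tangent sequence splitting off $(p^{[i]})^* T_B$; the power-series bookkeeping in the first step and the dimension count in the last are routine once Theorem \ref{PT}, the value $n_{g-\delta,\C_b}=1$ on $\delta$-nodal fibres, Lemma \ref{smtop} and \dimb\ are in hand.
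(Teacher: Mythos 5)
Your proposal is correct and follows essentially the same route as the paper's proof: reduce to the per-$i$ identity via the triangular BPS change of variables, use MacPherson naturality plus the $K$-theoretic relation $T_{\C_B^{[i]}/B}=T_{\C_B^{[i]}}-T_B$ (from Lemmas \ref{vlak} and \ref{lci}) to prove it, and then deduce the support and $\left[\overline{B(\delta)}\right]$ statements from Theorem \ref{PT}, the value $1$ on nodal fibres, \dimb, and Lemma \ref{smtop}.
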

\begin{proof}
By the defining equation \eqref{top}, the constructible function $n_{g-\delta}$ is a linear combination of the terms $p_*^{[i]}(1_{\C_B^{[i]}})$. It is easy to see that only the terms with $i=0,\ldots,\delta$ are involved. Similarly, $n^{\vir}_{g-\delta}$ is a linear combination (with the same coefficients) of the classes $p^{[i]}_*(c(T_{\C_B^{[i]}/B}))$, with $i=0\ldots \delta$. Therefore it suffices to verify that relation \eqref{eqDefect} holds for the first $\delta +1$ terms in the left hand sides of \eqref{top} and \eqref{vir}.

Recall that we have defined the class $T_{\C_B^{[i]}/B}$ in the Grothendieck group $K(\C_B^{[i]})$ by
\[T_{\C_B^{[i]}/B} = \left.T_{\S_B^{[i]}/B} \right|_{\C_B^{[i]}} - \left.\O(\C)^{[i]}\right|_{\C_B^{[i]}}\,.\]
For $i = 0,\ldots,\delta$, we have by Lemma \ref{vlak} and by Lemma \ref{lci} the following relations in $K(\S_B^{[i]})$ and $K(\C_B^{[i]})$ respectively:
\begin{align*}
T_{\S_B^{[i]}}												& = T_{\S_B^{[i]}/B} + T_B\,;\\
\left.T_{\S_B^{[i]}}\right|_{\C_B^{[i]}}		& = T_{\C_B^{[i]}} + \left.\O(\C)_B^{[i]}\right|_{\C_B^{[i]}}\,.
\end{align*}
It follows that
\[T_{\C_B^{[i]}/B} = T_{\C_B^{[i]}} - T_B\]
and hence
\[c(T_{\C_B^{[i]}/B}) = c(T_{\C_B^{[i]}})\, c(T_B)^{-1}\in A^*(\C_B^{[i]})\,.\] By the defining properties of Chern-Schwartz-MacPherson classes (Theorem \ref{MP}) we obtain
\begin{align*}
c_{SM}(p_*^{[i]}(1_{\C_B^{[i]}}))	& = p_*^{[i]}c_{SM}(1_{\C_B^{[i]}})\\
													& = p_*^{[i]}{\left(c(T_{\C_B^{[i]}})\cap [\C_B^{[i]}]\right)}\\
													& = p_*^{[i]}{\left(c(T_B)\,c(T_{\C_B^{[i]}/B}) \cap[\C_B^{[i]}]\right)}\\
													& = c(T_B)\,p_*^{[i]}{\left(c(T_{\C_B^{[i]}/B})\right)}\cap [B]\, .
\end{align*}

By Theorem \ref{PT} the support of the constructible function $n_{g-\delta}$ is lies in locus in $B$ over which the curves in the family $\C\rightarrow B$ have $\delta$-invariant $\geq\delta$. By the functoriality of the Chern-Schwartz-MacPherson class, the cycle class
\[c(T_B) \, n_{g-\delta}^{\vir}\cap[B]\]
is the push-forward of a cycle class on this locus. Since $c(T_B)$ is invertible, the same holds for $n^{\vir}_{g-\delta}\cap[B]$. By \cite[Prop. 3.23]{PT}, the function $n_{g-\delta}$ has constant value $1$ on the locus of $\delta$-nodal curves. If $\C$ satisfies \dimb, it follows that the support of the constructible function $n_{g-\delta}-1_{B(\delta)}$ lies in a closed subset of codimension $>\delta$. Hence the last assertion follows from Lemma \ref{smtop}.
\end{proof}

\begin{example}\label{exkst}
In \cite{KST} it is shown that both conditions of the proposition are satisfied by the universal curve $\C\rightarrow |L|$ over the linear system of a $\delta$-very ample line bundle $L$ on a smooth surface $S$, i.e., $\C$ satisfies \dimb, and the relative Hilbert schemes $\C_{|L|}^{[i]}$ are non-singular for $i\leq \delta$. By Bertini's theorem it then follows that the same holds for the restriction $\C_{\PP^\delta}$ of the universal curve to a general linear system $\PP^\delta \subset |L|$. In particular the set $\PP^\delta(\delta)$ is finite, and it follows that the degree
\[\int_{\PP^\delta} n^\vir_{g-\delta}(\C_{\PP^\delta})\cap [\PP^\delta]= \int_{\PP^\delta} \left[\PP^\delta(\delta)\right]\]
equals the number of $\delta$-nodal curves in the linear system $\PP^\delta$.
\end{example}
\begin{remark}\label{reKST}
It should be noted that the integrals differ slightly from the ones in \cite{KST}. In fact, in loc.\ cit.\ the authors consider a linear combination of the integrals
\begin{equation}\label{eqKSTInt}
\int_{\C^{[i]}_{\PP^\delta}} c\left(T_{\C^{[i]}_{\PP^\delta}}\right)
\end{equation}
whereas we use the relative (virtual) tangent bundles
\[T_{\C^{[i]}_{\PP^\delta}/\PP^\delta}=\left.T_{\C^{[i]}_{|L|}/|L|}\right|_{\C^{[i]}_{\PP^\delta}}\]
and consider the integrals
\begin{equation}\label{eqOurInt}
\int_{\C^{[i]}_{\PP^\delta}} c\left(T_{\C^{[i]}_{\PP^\delta}/\PP^\delta}\right)
\end{equation}
Interestingly, \eqref{eqKSTInt} does not equal \eqref{eqOurInt} in general, but after taking the BPS linear combination of the integrals for $i=0,\ldots,\delta$, they both calculate the number of $\delta$ nodal curves in the linear system $\PP^\delta$.
\end{remark}

More generally, let $B$ be a scheme and let $p\colon\S\rightarrow B$ be smooth projective of relative dimension $2$. Let $\L$ be a line bundle on $\S$ and assume $\L$ satisfies \amp. Then $p_*\L$ is a vector bundle and the fiber of the projective bundle
\[|\L/B|\coloneqq \PP(p_*\L) = \proj(\sym((p_*\L)^*))\rightarrow B\]
over a point $b\in B$ is the $\delta$-very ample complete linear system $|\L_b|$. Consider the family
\[\S_{|\L/B|}=\S\times_B|\L/B|\rightarrow |\L/B|\]
and let $\C\subset \S_{|\L/B|}$ be the universal divisor.
\begin{lemma}\label{keyex}
The family of curves $\C\rightarrow |\L/B|$ satisfies \dimb. If $B$ is smooth, the Hilbert schemes $\C_{|\L/B|}^{[i]}$ are non-singular.
\end{lemma}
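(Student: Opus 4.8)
The plan is to reduce everything to the absolute case treated in \cite{KST} (recalled in Example \ref{exkst}) by working fibrewise over $B$. First I would observe that both assertions are \emph{fibrewise} statements in a strong sense: the hypotheses \dimb{} concern codimensions of equisingular loci, and because $\S_{|\L/B|}\to B$ and $|\L/B|\to B$ are flat (the latter since $p_*\L$ is a vector bundle by \amp), the codimension of $B(\D)$ inside $|\L/B|$ can be computed on fibres. Over a point $b\in B$, the fibre of $|\L/B|$ is the complete linear system $|\L_b|$ of the $\delta$-very ample line bundle $\L_b$ on the smooth surface $\S_b$, and the restriction of $\C$ to this fibre is exactly the universal curve $\C_{|L|}\to|L|$ with $L=\L_b$. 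By \cite{KST} (see Example \ref{exkst}), this universal curve satisfies \dimb. So each fibre of $\C\to|\L/B|$ over $B$ satisfies the codimension estimates, and since the relevant equisingular loci are constructed compatibly with base change, taking the union over $b\in B$ gives that $\C\to|\L/B|$ satisfies \dimb.

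For the second assertion, I would use Lemma \ref{lci}: the relative Hilbert scheme $\C_{|\L/B|}^{[i]}$ is cut out in $\S_{|\L/B|,\,|\L/B|}^{[i]}$ by the canonical section of the tautological bundle $\O(\C)^{[i]}$, regularly and in the expected codimension $i$, fibrewise over $|\L/B|$. Hence $\C_{|\L/B|}^{[i]}\to|\L/B|$ is flat of relative dimension $2i-i=i$. Now I claim $\C_{|\L/B|}^{[i]}$ is smooth \emph{over $B$}: it suffices to show each fibre over $b\in B$ is smooth, because a flat morphism with smooth fibres is smooth and $|\L/B|\to B$ is smooth (as $p_*\L$ is locally free). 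The fibre of $\C_{|\L/B|}^{[i]}\to B$ over $b$ is precisely $\C_{|L|}^{[i]}$ for $L=\L_b$, which is smooth by \cite{KST} (again cited in Example \ref{exkst}). Therefore $\C_{|\L/B|}^{[i]}\to B$ is smooth, and if $B$ itself is smooth then $\C_{|\L/B|}^{[i]}$ is smooth as a scheme. Composing: $\C_{|\L/B|}^{[i]}$ is smooth.

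The main obstacle I anticipate is the bookkeeping in the first part: making precise that the loci $B(\D)\subset|\L/B|$ are compatible with the base change to fibres, i.e.\ that forming the equisingular stratum commutes with restricting to $|\L_b|$. This should follow from the fact that the equisingularity type of a curve is determined fibrewise — the singularity scheme and the Enriques-diagram data of \cite{KP1} are local on the total space of $\C$ — but one must check that the scheme-theoretic strata $B(\D)$ behave well. A minor point is ensuring the second condition of \dimb{} (bounding the loci of curves with $\delta$-invariant $>\delta$, of curves with $\delta$-invariant $=\delta$ but non-nodal singularities, and of non-reduced curves) also transfers fibrewise; but since each of these is again the union over $b\in B$ of the corresponding locus in $|\L_b|$, and \cite{KST} supplies the fibrewise bound, this causes no real difficulty once the base-change compatibility is in hand.
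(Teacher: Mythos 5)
Your proposal is correct and follows essentially the same route as the paper: the paper's (very terse) proof likewise checks \dimb{} fibrewise over $B$, uses Lemma \ref{lci} to get flatness of $\C_{|\L/B|}^{[i]}$ over $|\L/B|$ and hence over $B$, and then invokes Example \ref{exkst} (i.e.\ the results of \cite{KST} for the $\delta$-very ample linear system $|\L_b|$) to conclude. Your extra remarks on base-change compatibility of the strata just make explicit what the paper leaves implicit.
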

\begin{proof}
The first statement can be checked fibrewise over $B$.  By Lemma \ref{lci}, the Hilbert schemes of points $\C_{|\L/B|}^{[i]}$ are flat over $|\L/B|$, and hence over $B$.  Hence, if $B$ is smooth, also the second statement can be checked fibrewise over $B$. Hence, the lemma follows from Example \ref{exkst}.
\end{proof}


\section{Functoriality and support}
Let $B$ be a scheme. Let $\C$ be a relative effective divisor on a smooth family of surfaces $q\colon\S\rightarrow B$. Let $f\colon B'\rightarrow B$ be a morphism and consider the relative effective divisor $\C_{B'}=\C\times_BB'$ on the smooth family of surfaces
\[\S_{B'}=\S\times_BB'\rightarrow B'\,.\]
Then we have the following:
\begin{lemma}\label{func}
\begin{enumerate}[i)]
\item $f^*(n_{g-\delta}^{\vir}(\C))=n_{g-\delta}^{\vir}(\C_{B'})$ in $A^*(B')$;
\item $f_*(n_{g-\delta}^{\vir}(\C_{B'})\cap[B'])=n_{g-\delta}^{\vir}(\C)\cap f_*[B']$ in $A_*(B)$.
\end{enumerate}
\end{lemma}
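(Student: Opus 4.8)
The plan is to reduce both parts to standard base-change compatibilities --- of the relative Hilbert scheme of points, of its tautological bundle, and of the Gysin pushforward --- after first rewriting $n^{\vir}_{g-\delta}$ as a universal combination of a few elementary classes. First I would carry out the reduction already used in the proof of Proposition~\ref{defect}. Expanding the right-hand side of \eqref{vir} as a power series in $q$ and comparing the coefficients of $q^{0},\dots,q^{\delta}$ shows that
\[
n_{g-\delta}^{\vir}(\C)\;=\;\sum_{i=0}^{\delta}\lambda_i\,p^{[i]}_*\!\left(c\!\left(T_{\C_B^{[i]}/B}\right)\right),\qquad p^{[i]}\colon\C_B^{[i]}\to B,
\]
with universal integers $\lambda_i=\lambda_i(g,\delta)$. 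Since $\C\to B$ is flat and projective, the arithmetic genus of its fibres is the locally constant integer $g$, and it is unchanged by the base change $f$; so $n_{g-\delta}^{\vir}(\C_{B'})$ is the \emph{same} combination of the classes $(p_{B'}^{[i]})_*\left(c(T_{\C_{B'}^{[i]}/B'})\right)$, where $p_{B'}^{[i]}\colon\C_{B'}^{[i]}\to B'$ is the structure morphism of the relative Hilbert scheme of $\C_{B'}\to B'$. Thus part (i) reduces to the identity, for each $i\leq\delta$,
\begin{equation}\label{eqfuncpiece}
f^*\,p^{[i]}_*\!\left(c\!\left(T_{\C_B^{[i]}/B}\right)\right)\;=\;(p_{B'}^{[i]})_*\!\left(c\!\left(T_{\C_{B'}^{[i]}/B'}\right)\right)\quad\text{in }A^*(B').
\end{equation}

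To prove \eqref{eqfuncpiece} I would work with the Cartesian square
\[
\begin{tikzcd}
\C_{B'}^{[i]}\arrow[r,"g"]\arrow[d,"p_{B'}^{[i]}"] & \C_B^{[i]}\arrow[d,"p^{[i]}"]\\
B'\arrow[r,"f"] & B
\end{tikzcd}
\]
which is Cartesian because the relative Hilbert scheme of points commutes with base change, $\C_{B'}^{[i]}=\C_B^{[i]}\times_BB'$ and $\S_{B'}^{[i]}=\S_B^{[i]}\times_BB'$, with $g$ the induced projection. Because the universal length-$i$ subscheme is finite and flat over $\S_B^{[i]}$, the tautological bundle commutes with base change, so $\O(\C_{B'})_{B'}^{[i]}$ is the pullback of $\O(\C)_B^{[i]}$ along the projection $\S_{B'}^{[i]}\to\S_B^{[i]}$; and since $\S_B^{[i]}\to B$ is smooth (Lemma~\ref{vlak}), relative differentials commute with base change, so $T_{\S_{B'}^{[i]}/B'}$ is the pullback of $T_{\S_B^{[i]}/B}$. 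Restricting both to $\C_{B'}^{[i]}$ and taking the difference in the Grothendieck group gives $T_{\C_{B'}^{[i]}/B'}=g^*T_{\C_B^{[i]}/B}$, hence $c(T_{\C_{B'}^{[i]}/B'})=g^*c(T_{\C_B^{[i]}/B})$. It then remains to commute $g^*$ past the pushforward. By Lemmas~\ref{vlak} and~\ref{lci}, $p^{[i]}$ is the regular embedding $\C_B^{[i]}\hookrightarrow\S_B^{[i]}$ followed by the smooth morphism $\S_B^{[i]}\to B$, hence a proper local complete intersection morphism of relative dimension $i$, and the same holds for $p_{B'}^{[i]}$ by applying those lemmas to $\S_{B'}\to B'$. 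The Gysin pushforward along a proper l.c.i.\ morphism --- defined through the canonical orientation class of the morphism together with bivariant proper pushforward, \cite[Ch.~17]{Fu} --- commutes with arbitrary base change, i.e.\ $f^*p^{[i]}_*(\alpha)=(p_{B'}^{[i]})_*(g^*\alpha)$ for every $\alpha\in A^*(\C_B^{[i]})$. Applying this with $\alpha=c(T_{\C_B^{[i]}/B})$ yields \eqref{eqfuncpiece}, and summing against the $\lambda_i$ proves (i).

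For (ii), where $f$ is taken proper so that $f_*$ is defined, I would combine (i) with the projection formula for operational Chow classes \cite[Ch.~17]{Fu}:
\[
f_*\!\left(n_{g-\delta}^{\vir}(\C_{B'})\cap[B']\right)\overset{\text{(i)}}{=}f_*\!\left(f^*n_{g-\delta}^{\vir}(\C)\cap[B']\right)=n_{g-\delta}^{\vir}(\C)\cap f_*[B'].
\]

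The single load-bearing point, and the one I expect to need the most care, is the base-change compatibility of the Gysin pushforward $p^{[i]}_*$: since $B$ is not assumed smooth, $p^{[i]}_*$ must be read as the operational/bivariant pushforward attached to the l.c.i.\ morphism $p^{[i]}$, so that \eqref{eqfuncpiece} rests on the base-change stability of the l.c.i.\ orientation class together with that of proper pushforward, rather than on a Poincar\'e-duality argument. Everything else --- base change for $\C_B^{[i]}$, for the tautological bundle $\O(\C)_B^{[i]}$, and for $T_{\S_B^{[i]}/B}$ --- is routine.
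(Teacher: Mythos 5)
Your proof is correct and follows essentially the same route as the paper's: reduce to the individual terms $p^{[i]}_*c(T_{\C_B^{[i]}/B})$, use the Cartesian square and the base-change compatibility of the Gysin pushforward together with $g^*T_{\C_B^{[i]}/B}=T_{\C_{B'}^{[i]}/B'}$, then deduce (ii) from the projection formula. The only cosmetic difference is that the paper justifies the base change by the flatness of $p^{[i]}$ (Lemma~\ref{lci}), whereas you use its proper l.c.i.\ structure; both orientation classes are base-change stable, so either works.
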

\begin{proof}
It suffices to show the relations for the LHS of equation \eqref{vir}. Note that we have Cartesian squares
\[\begin{tikzcd}
\C_{B'}^{[i]}\arrow[r, "f_i"]\arrow[d,"p_{B'}^{[i]}"]	& \C_B^{[i]}\arrow[d, "p_B^{[i]}"]\\
B'\arrow[r, "f"] 												& B \,.
\end{tikzcd}\]
By flatness of the vertical maps, we have
\begin{align*}
f ^* p_{B*}^{[i]} c(T_{\C_B^{[i]}/B})		&=  p_{B'*}^{[i]} f_i^*c(T_{\C_B^{[i]}/B})\\
															&=  p_{B'*}^{[i]} c(f_i^*T_{\C_B^{[i]}/B})\\
															&=  p_{B'*}^{[i]} c(T_{\C_{B'}^{[i]}/B'})\,.
\end{align*}
The second part follows from the projection formula.
\end{proof}

Let $\C$ be given as above. Write $\L=\O(\C)$ and assume $\L$ satisfies \amp. As in the previous section, we can form the projective bundle $|\L/B| \coloneqq \PP(q_*\L)$ over $B$, with fibre over $b$ the $\delta$-very ample complete linear system $|\L_b|$. Let $\C''\subset \S_{|\L/B|}$ denote universal relative effective divisor on the family $\S_{|\L/B|}\rightarrow |\L/B|$. The bundle $\pi\colon |\L/B|\rightarrow B$ has a canonical section
\begin{equation}\label{eqSection}
\begin{tikzcd}
\vert\L/B\vert \arrow[d, "\pi", swap]	\\ B \arrow[u,bend right, "s", swap]
\end{tikzcd}
\end{equation}
induced by the inclusion $\O(-\C)\hookrightarrow \O$, and the divisor $\C$ is obtained by restricting the divisor $\C''$.

\begin{lemma}\label{lemdom}
Assume that $B$ is equidimensional, and let $W\subset \overline{|\L/B|(\delta)}$ be an irreducible component. Then $\pi(W)$ is an irreducible component of $B$.
\end{lemma}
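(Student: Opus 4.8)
The plan is a dimension count on the projective bundle $\pi\colon|\L/B|\to B$, using hypothesis \dimb\ at two levels. By Lemma~\ref{keyex} the family $\C''\to|\L/B|$ satisfies \dimb, so its $\delta$-nodal locus $|\L/B|(\delta)$ is empty or of pure codimension $\delta$ in $|\L/B|$ (if it is empty there is nothing to prove). On the other hand \amp\ makes $q_*\L$ a vector bundle of locally constant rank, so $\pi$ is a $\PP^N$-bundle with $N=h^0(\S_b,\L_b)-1$ locally constant; since such a bundle is Zariski-locally trivial, the irreducible components of $|\L/B|$ are exactly the preimages $\pi^{-1}(B_j)$ of the irreducible components $B_j$ of $B$, and $\dim\pi^{-1}(B_j)=\dim B_j+N|_{B_j}$. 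Finally, since $\L_b$ is $\delta$-very ample, Example~\ref{exkst} gives that the Severi locus $|\L_b|(\delta)\subset|\L_b|\cong\PP^{N(b)}$ is empty or of pure codimension $\delta$, so in particular $\dim|\L_b|(\delta)\le N(b)-\delta$ for every $b$.

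Now let $n=\dim B$ (recall $B$ is equidimensional) and let $W$ be an irreducible component of $\overline{|\L/B|(\delta)}$. I would write $W=\overline A$ with $A$ an irreducible component of the locally closed set $|\L/B|(\delta)$, so $A$ is dense in $W$ and $\dim A=\dim W$. Pick $b_0\in\pi(A)$ and set $N_0=N(b_0)$; every irreducible component of $|\L/B|$ containing $A$ is of the form $\pi^{-1}(B_j)$ with $b_0\in B_j$, hence of dimension $n+N_0$. Since $|\L/B|(\delta)$ has pure codimension $\delta$, it follows that $\dim W=\dim A=(n+N_0)-\delta$. For any $b\in\pi(A)$ the fibre $A\cap\pi^{-1}(b)$ lies in $|\L/B|(\delta)\cap|\L_b|=|\L_b|(\delta)$, which has dimension $\le N(b)-\delta=N_0-\delta$. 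Bounding $\dim A$ by the dimension of its image plus the maximal fibre dimension of the surjection $A\to\pi(A)$, and noting that $\pi$ is proper so that $\pi(W)=\overline{\pi(A)}$ is closed, we obtain
\[
\dim\pi(W)\ =\ \dim\overline{\pi(A)}\ \ge\ \dim A-(N_0-\delta)\ =\ n.
\]
Thus $\pi(W)$ is a closed irreducible subset of $B$ of dimension $n=\dim B$; as $B$ is equidimensional this forces $\pi(W)$ to be an irreducible component of $B$.

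The main point to get right is the passage between $\overline{|\L/B|(\delta)}$ and the locally closed Severi locus $|\L/B|(\delta)$: one must take the fibres of $A\to\pi(A)$ over the genuine $\delta$-nodal locus $A=W\cap|\L/B|(\delta)$, so that the fibre over $b$ actually lands inside $|\L_b|(\delta)$ and the fibrewise form of \dimb\ from Example~\ref{exkst} applies. Using the closed set directly would only place this fibre in the locus of curves of $\delta$-invariant $\ge\delta$ in $|\L_b|$, which is still $(N_0-\delta)$-dimensional by \dimb\ for $\S_b$, but at the cost of invoking semicontinuity of the $\delta$-invariant; so either route works. Everything else is bookkeeping, the only mild subtlety being that $|\L/B|$ need not be equidimensional when $B$ is reducible — harmless, since $N$ takes a single value $N_0$ along the irreducible set $\pi(A)$, which is exactly what lets the count above go through uniformly.
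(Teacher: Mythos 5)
Your argument is correct and is essentially the paper's own proof: both restrict to the dense open $W^{\circ}=W\cap|\L/B|(\delta)$, combine the pure codimension $\delta$ of $|\L/B|(\delta)$ with the fibrewise codimension-$\delta$ bound from Example~\ref{exkst}, and conclude $\dim\pi(W)\geq \dim W^{\circ}-(\dim|\L_b|-\delta)=\dim B$. The one point where the paper is more careful is the purity: it deduces that \emph{every} component of $|\L/B|(\delta)$ has codimension exactly $\delta$ from the first statement of Theorem~\ref{KP} (with a footnote adapting \cite{KP2} to \dimb), rather than reading it off \dimb\ directly as you do -- your dimension count genuinely needs this purity (a component $A$ of higher codimension would break the lower bound on $\dim\pi(W)$), so the citation matters, but the needed fact is available in the paper and your argument goes through unchanged.
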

\begin{proof}
Clearly $\pi(W)$ is closed and irreducible, so it suffices to show that $\dim(\pi(W)) = \dim(B)$. Write $W^\circ$ for the open $W\cap |\L/B|(\delta)$ of $W$. By Lemma \ref{keyex} and the first statement of Theorem \ref{KP}\footnote{Our assumptions are slightly weaker, but the argument given in \cite{KP2} still holds if we replace \dima by \dimb.}, $|\L/B|(\delta)$ has pure codimension $\delta$. Hence $W^\circ\subset |\L/B|$ has codimension $\delta$. By Example \ref{exkst}, a fibre $W^\circ_b$ over $b\in B$ is empty, or has codimension $\delta$ in $|\L/B|_b = |\L_b|$. It follows that
\begin{align*}
\dim(\pi(W)) 	&= \dim(\pi(W^\circ))\\
						&= \dim(|\L/B|) - \delta - (\dim(|\L_b|) - \delta)\\
						&= \dim(B)\,.\qedhere
\end{align*}
\end{proof}

\begin{definition}
Define a cycle
\[U(\delta) \coloneqq \sum_{W} l(\O_{\pi(W),B})\,[W] \in A_*(|\L/B|)\]
in which the sum is taken over the irreducible components of $\overline{|\L/B|(\delta)}$. Here $l(\O_{\pi(W),B})$ denotes the length of the local ring along the subvariety $\pi(W)$ of $B$.
\end{definition}

As in the introduction, we will use the notation
\[\gamma(\C) = \{n^\vir_{g-\delta}(\C)\}_\delta\in A^\delta(B)\,.\]

\begin{proposition}\label{sup}
Let $B$ be a \emph{complete} base scheme, and $\C$ be a relative effective divisor on a smooth family of surfaces $\S\rightarrow B$. Assume the line bundle $\L = \O(\C)$ satisfies \amp. Then we have
\[n_{g-\delta}^{\vir}(\C) \in A^{\geq\delta}(B)\, .\]
Now assume that $B$ is equidimensional, and let $s$ be given as in \eqref{eqSection}. Then we have
\[\gamma(\C)\cap[B] = s^! (U(\delta))\,.\]
In particular, $\gamma(\C)\cap[B]$ is supported on the locus of curves with $\delta$-invariant $\geq \delta$. Finally, if moreover $\C$ satisfies \dimb, then $\gamma(\C)\cap[B]$ is the class of a natural cycle with support $\overline{B(\delta)}$.
\end{proposition}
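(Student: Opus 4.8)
The plan is to prove Proposition \ref{sup} in three stages, corresponding to the three assertions: the degree bound $n^\vir_{g-\delta}(\C)\in A^{\geq\delta}(B)$, the identity $\gamma(\C)\cap[B]=s^!(U(\delta))$, and finally the statement that under \dimb\ this class is represented by a natural cycle with support $\overline{B(\delta)}$. The key tool throughout is the section $s\colon B\to|\L/B|$ from \eqref{eqSection}, together with the base-change compatibility of Lemma \ref{func} applied to $s$, which gives $s^*(n^\vir_{g-\delta}(\C''))=n^\vir_{g-\delta}(\C)$ since $\C=s^*\C''$. Because $\pi\colon|\L/B|\to B$ is a projective bundle and $s$ is a section, $s$ is a regular embedding of codimension $\dim|\L_b|$, so the refined Gysin map $s^!$ of \cite[Chapter 6]{Fu} makes sense and satisfies $s^!\circ\pi^*=\mathrm{id}$ on Chow groups and $s^!([|\L/B|])=[B]$ when $B$ is equidimensional.

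For the first assertion, I would apply Proposition \ref{defect} to the family $\C''\to|\L/B|$. By Lemma \ref{keyex}, $\C''\to|\L/B|$ satisfies \dimb\ and, provided $B$ is smooth, the relative Hilbert schemes $\C''^{[i]}_{|\L/B|}$ are nonsingular, so Proposition \ref{defect} applies and shows $n^\vir_{g-\delta}(\C'')\cap[|\L/B|]$ is supported in codimension $\geq\delta$; combined with the fact that $|\L/B|$ is smooth this forces $n^\vir_{g-\delta}(\C'')\in A^{\geq\delta}(|\L/B|)$. To remove the smoothness hypothesis on $B$, I would use the universal polynomiality from the first part of Theorem \ref{Theorem A} (the family-EGL statement): the class $n^\vir_{g-\delta}(\C)$ is a universal polynomial in the $\epsilon(a,b,c)$, hence its formation commutes with pullback, and one reduces to the case where $B$ is, say, a Grassmannian or a product of projective spaces parametrizing the relevant data — in any case smooth — by writing $B$ as the pullback of a universal situation over a smooth base. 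Pulling back the relation $n^\vir_{g-\delta}\in A^{\geq\delta}$ along $f\colon B\to B^{\mathrm{univ}}$ via Lemma \ref{func}(i) gives the bound for arbitrary $B$. Then $s^*(n^\vir_{g-\delta}(\C''))=n^\vir_{g-\delta}(\C)$, so $\gamma(\C)=s^*\{n^\vir_{g-\delta}(\C'')\}_\delta$, and since $n^\vir_{g-\delta}(\C'')\in A^{\geq\delta}$ its degree-$\delta$ part equals $\{n^\vir_{g-\delta}(\C'')\}_\delta$ capped appropriately.

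For the second assertion, the point is to identify $n^\vir_{g-\delta}(\C'')\cap[|\L/B|]$, in its bottom degree, with $U(\delta)$. Granting the smooth-$B$ case (and then descending by functoriality as above), Proposition \ref{defect} gives $n^\vir_{g-\delta}(\C'')\cap[|\L/B|]=[\overline{|\L/B|(\delta)}]+\beta$ with $\beta$ of dimension strictly below $\dim|\L/B|-\delta$. Now Lemma \ref{lemdom} tells us exactly that every component $W$ of $\overline{|\L/B|(\delta)}$ dominates a component of $B$ under $\pi$, so the pushforward structure matches the definition of $U(\delta)$: the multiplicity $l(\O_{\pi(W),B})$ is precisely the local multiplicity of $[B]$ along $\pi(W)$, and one checks that $\{n^\vir_{g-\delta}(\C'')\cap[|\L/B|]\}_{\dim-\delta}$, interpreted with the scheme-theoretic multiplicities coming from the $\epsilon$-polynomial formula, equals $U(\delta)$ as a cycle. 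Applying $s^!$ and using $s^!([|\L/B|])=[B]$, together with the compatibility $s^!(n^\vir_{g-\delta}(\C'')\cap[|\L/B|])=s^*(n^\vir_{g-\delta}(\C''))\cap s^!([|\L/B|])=n^\vir_{g-\delta}(\C)\cap[B]$, yields $\gamma(\C)\cap[B]=s^!(U(\delta))$. Since $U(\delta)$ is supported on $\overline{|\L/B|(\delta)}$, which lies over the locus of curves with $\delta$-invariant $\geq\delta$, and $s^!$ does not increase support beyond the preimage, $\gamma(\C)\cap[B]$ is supported on that locus.

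The last assertion is then mostly bookkeeping: if $\C$ satisfies \dimb, the locus $B(\delta)$ has codimension exactly $\delta$ and the other bad loci have codimension $>\delta$, so among the components $W$ of $\overline{|\L/B|(\delta)}$ the ones meeting $s(B)$ in the expected dimension are exactly those with $s^{-1}(W)=\overline{B(\delta)}$-components, while the contributions of $\beta$ and of components lying over lower-dimensional strata drop below dimension $n-\delta$ after applying $s^!$. Defining the natural cycle as $s^!(U(\delta))$ — or equivalently $\sum l(\O_{\pi(W),B})\,[s^{-1}W]$ over the components with $\pi(W)$ a component of $B$ — gives an effective cycle with support $\overline{B(\delta)}$ representing $\gamma(\C)\cap[B]$. \textbf{The main obstacle} I anticipate is the reduction from smooth $B$ to arbitrary $B$: one must argue carefully that the universal-polynomiality of $n^\vir_{g-\delta}$ genuinely lets one pull the cycle-level identity $n^\vir_{g-\delta}\cap[|\L/B|]=U(\delta)+\beta$ back along $f\colon B\to B^{\mathrm{univ}}$ — the bivariant class $n^\vir_{g-\delta}$ pulls back by Lemma \ref{func}, but $[B]$ need not pull back from $[B^{\mathrm{univ}}]$, so the argument has to be run intrinsically over $|\L/B|$ (where smoothness is automatic once $B$ is smooth) and then the section $s$ used to descend, rather than trying to descend the capped class directly.
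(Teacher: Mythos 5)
Your overall architecture --- work over $|\L/B|$, apply Proposition \ref{defect} there via Lemma \ref{keyex}, identify the leading term of $n^\vir_{g-\delta}(\C'')\cap[|\L/B|]$ with $U(\delta)$, and descend along the section $s$ using $s^!$ and the compatibility $s^*(n^\vir_{g-\delta}(\C''))=n^\vir_{g-\delta}(\C)$ --- is the paper's. You also correctly identify the one real difficulty, the passage from smooth $B$ to arbitrary $B$. But your proposed way around it does not work, and this is a genuine gap. You suggest writing $B$ as the pullback of ``a universal situation over a smooth base'' and invoking Lemma \ref{func}(i); no such universal smooth base exists here, since $\S\rightarrow B$ is an arbitrary smooth projective family of surfaces with a relative divisor and there is no smooth parameter space from which every such family is pulled back. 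Your fallback --- ``run the argument intrinsically over $|\L/B|$, where smoothness is automatic'' --- also fails, because $|\L/B|=\PP(q_*\L)$ is a projective bundle \emph{over} $B$ and is smooth only when $B$ is. Moreover, the multiplicities $l(\O_{\pi(W),B})$ in $U(\delta)$ cannot be extracted from ``the scheme-theoretic multiplicities coming from the $\epsilon$-polynomial formula'': that formula computes the bivariant class $n^\vir_{g-\delta}$, which pulls back cleanly, whereas the multiplicities live in the fundamental cycle $[B]$, which (as you yourself note) does not pull back.

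The paper's mechanism is covariant rather than contravariant. One takes $B'$ to be the disjoint union of $l(\O_{Z,B})$ copies of a resolution of singularities of each irreducible component $Z\subset B$, giving a proper surjective $f\colon B'\rightarrow B$ with $B'$ smooth projective and $f_*[B']=[B]$. Over $B'$, Lemma \ref{keyex} and Proposition \ref{defect} apply, yielding $n^\vir_{g-\delta}(\C''')\cap[|\L'/B'|]=\bigl[\overline{|\L'/B'|(\delta)}\bigr]+\alpha$, and Lemma \ref{func}(ii) pushes this identity forward along $f_\pi$; the coefficient $l(\O_{\pi(W),B})$ of a component $W$ in $(f_\pi)_*\bigl[\overline{|\L'/B'|(\delta)}\bigr]$ arises precisely as the number of copies of the resolution lying over $\pi(W)$, each contributing a component mapping generically one-to-one onto $W$ (this is where Lemma \ref{lemdom} is used). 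Only then is $s^!$ applied. Finally, the first assertion is claimed for a complete $B$ that need not be equidimensional; your argument does not address this case, whereas the paper obtains it by testing $n^\vir_{g-\delta}(\C)$ against arbitrary morphisms $V\rightarrow B$ from $n$-dimensional varieties and applying the equidimensional case to $V$.
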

\begin{proof}
Assume that $B$ is equidimensional. Without loss of generality, we may assume that $B$ is connected so that $H^0(\S_b,\L_b)$ is constant and $|\L/B|$ equidimensional. By resolution of singularities, there is a proper surjective morphism $f\colon B'\rightarrow B$ from a smooth projective scheme $B'$ with $f_*[B'] = [B]$. In fact, we consider the union of $l(\O_{Z,B})$ copies of $Z$ for each irreducible component $Z\subset B$. Then we choose for each component a birational morphism from a smooth projective variety.

Let $\C'$, $\S'$ and $\L'$ be the base changes of $\C$, $\S$, and $\L$ along $f$. Then
\[\L'=f_\S^*\O_\S(\C)=\O_{\S'}(\C')\,\]
satisfies \amp\ and we have the following diagram with Cartesian squares:
\[
\begin{tikzcd}
\C'''\arrow[d]\arrow[r]														&\C''\arrow[d]\\
\vert\L'/B'\vert\arrow[d,"\pi'", swap]\arrow[r,"f_\pi"]		& \vert\L/B\vert\arrow[d, "\pi",swap]\\
B'\arrow[r,"f"]																	& B \arrow[u,bend right=40, "s", swap]\,,
\end{tikzcd}
\]
in which $\C'''$ and $\C''$ are the universal divisors on $\S'_{|\L'/B'|}$ and $\S_{|\L/B|}$ in the linear systems $|\L'/B'|$ and $|\L/B|$ respectively.

Note that $(f_\pi)_*[|\L'/B'|] = [|\L/B|]$. By Lemma \ref{keyex} and Proposition \ref{defect} we have
\[n_{g-\delta}^{\vir}(\C''')\cap |\L'/B'| = \left[\overline{|\L'/B'|(\delta)}\right]+ \alpha \in A_*(|\L'/B'|)\,,\]
with $\alpha$ a sum of classes with codimension $>\delta$.
%
We have (set-theoretically)
\[f_\pi(\overline{|\L'/B'|(\delta)}) = \overline{|\L/B|(\delta)}\,.\]
Now let $W$ be an irreducible component of $\overline{|\L/B|(\delta)}$. We will prove that the multiplicity of $W$ in $(f_\pi)_*([\overline{|\L'/B'|(\delta)}])$ is given by $l(\O_{\pi(W),B})$ ($\star$). By Lemma~\ref{lemdom}, $\pi(W)$ (with the reduced scheme structure) is an irreducible component of $B$. Any irreducible component $W'$ of $\overline{|\L'/B'|(\delta)}$ that maps onto $W$ lies over one of the $l(\O_{\pi(W),B})$ copies of a resolution of singularities
\[\rho\colon\widetilde{\pi(W)}\rightarrow \pi(W)\,,\]
so $W'$ lies in $W\times_{\pi(W)} \widetilde{\pi(W)}$. Let $U \subset \pi(W)$ be an open over which $\rho$ is an isomorphism. Then
\[W\times_{\pi(W)} \rho^{-1}(U)\rightarrow W\cap\pi^{-1}(U)\]
is an isomorphism. Since $W'$ is irreducible and maps onto $W$, we have
\[W' = \overline{W\times_{\pi(W)} \rho^{-1}(U)}\]
and the morphism $W'\rightarrow W$ is generically of degree $1$. We have now proved $\star$. It follows that
\[(f_\pi)_*\left(\left[\overline{|\L'/B'|(\delta)}\right]\right) = U(\delta)\,.\]

Hence, by Lemma \ref{func} we have
\begin{align*}
n_{g-\delta}^{\vir}(\C)\cap [B] 	&= s^*(n_{g-\delta}^{\vir}(\C''))\cap[B] \\
								&= s^!(n_{g-\delta}^{\vir}(\C'')\cap [|\L/B|])\\
								&= s^!(n_{g-\delta}^{\vir}(\C'')\cap (f_\pi)_*[|\L'/B'|])\\
								&= s^!(f_\pi)_*(n_{g-\delta}^{\vir}(\C''')\cap [|\L'/B'|])\\
								&= s^!(f_\pi)_*\left(\left[\overline{|\L'/B'|(\delta)}\right]\right)+\beta\\
								&= s^!\left(U(\delta)\right)+\beta
\end{align*}
with $\beta\in A_*(B)$ a sum of classes with codimension $>\delta$. In particular we find
\[\gamma(\C)\cap [B] = s^!\left(U(\delta)\right)\,.\]

If $\C$ satisfies \dimb, it follows that $s(B)$ and $\overline{|\L/B|(\delta)}$ intersect properly in $|\L/B|$, i.e.\ in dimension $\dim B - \delta$, and we have set theoretically
\begin{equation}\label{eqLocusDoorsnede}
s^{-1}\left(\overline{|\L/B|(\delta)}\right) = \overline{B(\delta)}\,.
\end{equation}
To see this, note that since $|\L/B|(\delta)$ has codimension $\delta$ in $|\L/B|$, an irreducible component of $s^{-1}\left(\overline{|\L/B|(\delta)}\right)$ has codimension $\leq \delta$. On the other hand, it consists of curves with $\delta$-invariant $\geq\delta$. So by \dimb, it has pure codimension $\delta$ in $B$. Moreover, the set
\[\left.s^{-1}\left(\overline{|\L/B|(\delta)}\right)\middle\backslash\right. B(\delta)\]
consists of curves with $\delta$-invariant $\geq \delta$ that are not $\delta$-nodal. By \dimb, it has codimension $>\delta$. Hence $B(\delta)$ lies dense in $s^{-1}\left(\overline{|\L/B|(\delta)}\right)$, proving equation \eqref{eqLocusDoorsnede}.

It follows that $\gamma(\C)\cap [B]$ is a the class of a natural effective cycle with support equal to $\overline{B(\delta)}$.

Now let $B$ be any complete scheme and let $V\rightarrow B$ be a morphism from an $n$-dimensional variety $V$. By the above we have
\[n^\vir_{g-\delta}(\C)\cap[V] = n^\vir_{g-\delta}(\C_V)\cap[V]\in A_{\leq n-\delta}(V)\]
and hence we have an equality of bivariant classes
\[n^\vir_{g-\delta}(\C) = \gamma(\C) + \alpha \in A^*(B)\]
with $\alpha\in A^*(B)$ a sum of classes of degree $>\delta$.
\end{proof}

\section{Universality: relative EGL}\label{SectionUni}
Let $B$ be a scheme, and let $\C$ be a relative effective divisor on a smooth family of surfaces $q\colon\S\rightarrow B$. The arithmetic genus of a curve in the family $p\colon\C\rightarrow B$ is denoted by $g$, which we view as a locally constant function on $B$. We consider the transformation of power series~\eqref{vir} and rewrite it as follows:
\begin{align*}
\sum_{n=0}^\infty p_*^{[n]}(c(T_{\C^{[n]}/B})\,q^n
			&= \sum_{r=-\infty}^g n^\vir_r\,q^{g-r} (1-q)^{2r-2}\\
			&= \sum_{i=0}^\infty n^\vir_{g-i}\,q^i (1-q)^{2(g-i)-2}\\
			&= \sum_{i=0}^\infty n^\vir_{g-i}\,q^i \sum_{j=0}^\infty (-q)^j\binom{2(g-i)-2}{j}\\
			&= \sum_{n=0}^\infty\sum_{i=0}^n n^\vir_{g-i}\ (-1)^{n-i}\binom{2(g-i)-2}{n-i}\,q^n\,.
\end{align*}
It follows that we can write
\begin{equation}\label{gooddef}
n_{g-\delta}^{\vir} = \sum_{i=0}^\delta a_i\,p^{[i]}_*(c(T_{\C_B^{[i]}/B}))\,,
\end{equation}
in which the $a_i$ are polynomials of degree $\delta - i$ in $g$, depending only on $\delta$ and $i$. In fact, $a_i=a_{i\delta}$ can be found by inverting the upper triangular matrix with $1$'s on the diagonal
\[\left[(-1)^{j-i}\binom{2(g-i)-2}{j-i}\right]_{0\leq i,j\leq \delta}\, {\textstyle \raisebox{-1em}{.}}\]

\begin{proposition}\label{propUniversal}
The class $\gamma(\C)$, can be expressed universally as a polynomial of degree $\delta$ in the classes
\[\epsilon(a,b,c)\coloneqq q_*(c_1(\O(\C))^a c_1(T_{\S/B})^b c_2(T_{\S/B})^c)\, ,\]
in which $q_*$ denotes the Gysin push-forward.
\end{proposition}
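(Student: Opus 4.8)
The plan is to reduce the computation of $\gamma(\C)$ to tautological push-forwards from the relative Hilbert schemes $\S^{[i]}_B$ for $i\le\delta$, and then to carry out the algorithm of Ellingsrud, G\"ottsche and Lehn (Theorem~\ref{EGL}) in a relative form over the base $B$. Concretely, I would first use \eqref{gooddef} to write $n^\vir_{g-\delta}=\sum_{i=0}^\delta a_i\, p^{[i]}_*(c(T_{\C^{[i]}_B/B}))$, with each $a_i$ a universal polynomial in the locally constant arithmetic genus $g$. Since $g=1+\tfrac12\,\epsilon(2,0,0)-\tfrac12\,\epsilon(1,1,0)$ by adjunction on the fibres, the $a_i$ are themselves (codimension-$0$) polynomials in the $\epsilon(a,b,c)$; so it suffices to treat each summand $p^{[i]}_*(c(T_{\C^{[i]}_B/B}))$ and, keeping track of codimension, to show that its codimension-$k$ part is a universal polynomial of weighted degree $k$ in the $\epsilon(a,b,c)$, where $\epsilon(a,b,c)$ is assigned weight $a+b+2c-2=\operatorname{codim}\epsilon(a,b,c)$. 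Taking $k=\delta$ then yields the proposition.

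Next I would move each term onto $\S^{[i]}_B$. By Lemma~\ref{lci} the embedding $\C^{[i]}_B\hookrightarrow\S^{[i]}_B$ is regular of codimension $i$, cut out by the tautological section of the rank-$i$ bundle $E_i\coloneqq\O(\C)^{[i]}_B$, and $\S^{[i]}_B\to B$ is smooth by Lemma~\ref{vlak}. Using $T_{\C^{[i]}_B/B}=\big(T_{\S^{[i]}_B/B}-E_i\big)\big|_{\C^{[i]}_B}$ in $K$-theory, so that $c(T_{\C^{[i]}_B/B})$ is the restriction of $c(T_{\S^{[i]}_B/B})\,c(E_i)^{-1}$, the projection formula together with the Euler-class identity $i_*[\C^{[i]}_B]=c_i(E_i)\cap[\S^{[i]}_B]$ gives
\[
p^{[i]}_*\!\big(c(T_{\C^{[i]}_B/B})\cap[\C^{[i]}_B]\big)=q^{[i]}_*\!\Big(c_i(E_i)\cdot\tfrac{c(T_{\S^{[i]}_B/B})}{c(E_i)}\cap[\S^{[i]}_B]\Big).
\]
The integrand is a polynomial in the Chern classes of the relative tangent bundle $T_{\S^{[i]}_B/B}$ and of the tautological bundle $E_i$ (the factor $c(E_i)^{-1}$ is a power series truncating in each codimension), so the task reduces to expressing $q^{[i]}_*$ of such a tautological class universally in the $\epsilon(a,b,c)$.

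The crux is then to rerun the proof of Theorem~\ref{EGL} relative to $B$. The EGL argument is geometric: it relates tautological classes on $S^{[n]}$ to classes on the nested Hilbert schemes $S^{[n,n+1]}$, which carry explicit maps to $S^{[n]}$, $S^{[n+1]}$ and $S$; an induction on $n$ reduces any tautological integral over $S^{[n]}$ to integrals over products $S^{\times k}$ of polynomials in the Chern classes of $T_S$, the ranks, the classes $c_1(F_j)$ and the diagonal classes, which in turn reduce to $\int_S$ of polynomials in $c_1(T_S)$, $c_2(T_S)$ and $c_1(F_j)$. Each ingredient has a relative counterpart when $q\colon\S\to B$ is smooth projective of relative dimension $2$: $\S^{[n]}_B$ is smooth over $B$ (Lemma~\ref{vlak}); the relative nested Hilbert scheme $\S^{[n,n+1]}_B$ has the analogous structure and structure maps; the Gysin push-forwards of \cite[Ch.~17]{Fu} are functorial and satisfy the projection formula; and the relative diagonal of $\S\times_B\S$ is regularly embedded with normal bundle $T_{\S/B}$, so that push-forward from $\S^{\times_B\,k}$ of a polynomial in Chern classes reduces to polynomials in the classes $q_*\!\big(c_1(\O(\C))^a c_1(T_{\S/B})^b c_2(T_{\S/B})^c\big)=\epsilon(a,b,c)$. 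Since the recursion preserves codimension and the universal polynomials produced depend only on the combinatorial input and not on $\S$, $\C$ or $B$, this gives the claimed universal expression, whose codimension-$\delta$ part shows that $\gamma(\C)$ is weighted-homogeneous of degree $\delta$ in the $\epsilon(a,b,c)$. The main obstacle is exactly this relative reworking of EGL: their theorem is stated for a single surface over a point, and one must check that nothing in their inductive machinery uses properness or smoothness of the base, so that ``$\int_S$'' may be replaced throughout by ``$q_*$'' and ``$S^{[n]}$'' by ``$\S^{[n]}_B$'' over an arbitrary, possibly singular and non-complete, base $B$, together with the bookkeeping needed to keep the output polynomial in the $\epsilon(a,b,c)$.
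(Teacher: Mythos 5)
Your reduction is the same as the paper's: write $n^\vir_{g-\delta}=\sum_{i}a_i\,p^{[i]}_*(c(T_{\C^{[i]}_B/B}))$ via \eqref{gooddef}, express $g$ through $2g-2=\epsilon(2,0,0)-\epsilon(1,1,0)$, pass to $\S^{[i]}_B$ by Lemma \ref{lci} exactly as in \eqref{eqInt}, and invoke a relative version of EGL (the paper's Theorem \ref{EGLrel}); your sketch of why the EGL induction relativizes (nested Hilbert schemes, the relative diagonal with normal bundle $T_{\S/B}$, functoriality and projection formula for the Gysin push-forwards of \cite[Ch.~17]{Fu}) matches the paper's proof of that theorem.

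The gap is in the degree claim. You assign $\epsilon(a,b,c)$ the weight $a+b+2c-2$ and conclude that $\gamma(\C)$ is ``weighted-homogeneous of degree $\delta$''. That statement is essentially vacuous --- it only records that $\gamma(\C)\in A^\delta(B)$ --- and it does not control the polynomial degree in the sense of the proposition, since several of the classes $\epsilon(a,b,c)$ (e.g.\ $\epsilon(2,0,0)$, $\epsilon(1,1,0)$, $\epsilon(0,2,0)$, $\epsilon(0,0,1)$) lie in $A^0(B)$ and can occur to arbitrarily high powers without changing codimension. The degree meant here is the number of $\epsilon$-factors in each monomial. The upper bound $\le\delta$ does come out of the EGL mechanism you describe (the push-forward from $\S^{[i]}_B$ reduces to push-forwards from $\S_B^m$ with $m\le i$, each factor contributing one class $\epsilon(a,b,c)$), and this is how Theorem \ref{EGLrel} is stated; but you never address why the degree is \emph{exactly} $\delta$ rather than smaller. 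The paper closes this by specializing to the universal curve over a general $\PP^\delta\subset|L|$ on a single surface and observing, as in \cite{KST}, that the EGL algorithm applied to the $i=\delta$ term of \eqref{eqInt} produces the nonzero monomial $c_2(S)^\delta/\delta!=\epsilon(0,0,1)^\delta/\delta!$ coming from $c_{2\delta}(T_{S^{[\delta]}})\,\omega^\delta$, a term unaffected by the discrepancy between absolute and relative tangent bundles (Remark \ref{reKST}). Without some such nonvanishing argument your proof yields only Lemma \ref{lemmaUniversal} (degree $\le\delta$), not the proposition as stated.
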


We first prove the following lemma:
\begin{lemma}\label{lemmaUniversal}
There exists a polynomial as in the proposition of degree $\leq\delta$ in the classes $\epsilon(a,b,c)$.
\end{lemma}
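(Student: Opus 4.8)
The plan is to reduce the statement to a purely Hilbert-scheme-theoretic universality, and then apply a relative version of the Ellingsrud--G\"ottsche--Lehn theorem (Theorem \ref{EGL}). By equation \eqref{gooddef}, the class $\gamma(\C)=\{n_{g-\delta}^\vir(\C)\}_\delta$ is a fixed linear combination (with coefficients depending only on $\delta$, $i$ and $g$) of the degree-$\delta$ parts of the classes $p^{[i]}_*(c(T_{\C_B^{[i]}/B}))$ for $i=0,\dots,\delta$. Since $g$ is itself a polynomial in the classes $\epsilon(a,b,c)$ (it is $1+\tfrac12 q_*(c_1(\O(\C))^2 + c_1(\O(\C))\,c_1(T_{\S/B}))$, the relative adjunction formula), it suffices to show that each individual class $p^{[i]}_*(c(T_{\C_B^{[i]}/B}))\in A^*(B)$ is a universal polynomial in the $\epsilon(a,b,c)$, after which Lemma \ref{lemmaUniversal} (the degree-$\leq\delta$ bound) follows from a dimension count.

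First I would rewrite the integrand on $\C_B^{[i]}$ in terms of data on $\S_B^{[i]}$. By Lemma \ref{lci}, $\C_B^{[i]}\hookrightarrow \S_B^{[i]}$ is the zero locus of a regular section of the tautological bundle $\O(\C)_B^{[i]}$, which is cut out in the expected codimension $i$; hence the Gysin pushforward along $\C_B^{[i]}\hookrightarrow\S_B^{[i]}$ of $c(T_{\C_B^{[i]}/B})\cap[\C_B^{[i]}]$ equals $c_i(\O(\C)_B^{[i]})\cdot c(T_{\C_B^{[i]}/B})\cap[\S_B^{[i]}]$, where on the right $c(T_{\C_B^{[i]}/B})$ is replaced by $c\big(T_{\S_B^{[i]}/B}|\big)\,c\big(\O(\C)_B^{[i]}\big)^{-1}$ restricted from $\S_B^{[i]}$ via the definition of the virtual tangent bundle in $K(\C_B^{[i]})$. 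Thus $p^{[i]}_*(c(T_{\C_B^{[i]}/B}))$ is $q^{[i]}_*$ of a universal polynomial in the Chern classes of the relative tangent bundle $T_{\S_B^{[i]}/B}$ and of the tautological bundle $\O(\C)_B^{[i]}$ on the smooth family $\S_B^{[i]}\to B$ (smooth of relative dimension $2i$ by Lemma \ref{vlak}).

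The heart of the matter is then a relative version of Theorem \ref{EGL}: for a smooth projective family of surfaces $q\colon\S\to B$, vector bundles $\F_1,\dots,\F_l$ on $\S$, and any polynomial $P$ in the Chern classes of $T_{\S_B^{[n]}/B}$ and of the $\F_{j,B}^{[n]}$, the class $q^{[n]}_*(P\cap[\S_B^{[n]}])\in A^*(B)$ equals a universal polynomial (depending only on $P$ and $n$) in the classes $q_*(p(T_{\S/B},\F_1,\dots,\F_l)\cap[\S])$. The cleanest route is to observe that the EGL machinery --- incidence varieties $\S^{[n,n+1]}_B$, the recursion expressing integrals over $\S^{[n+1]}_B$ in terms of integrals over $\S^{[n]}_B$ and over $\S$, and the identification of tangent bundles and tautological bundles under the maps in the incidence diagram --- is entirely geometric and compatible with arbitrary base change; everything EGL do over $\spec\CC$ can be run over $B$, with $\int_S$ replaced by $q_*$ and $\int_{S^{[n]}}$ by $q^{[n]}_*$, because all the pushforwards involved are along maps that are smooth or lci of constant relative dimension, and the projection formula and compatibility of Gysin maps with the relevant Cartesian squares hold in this generality. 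I expect this to be the main obstacle: not any single step, but carefully checking that each ingredient of \cite{EGL} --- in particular the compatibility of the tautological bundles with the incidence correspondence and the cobordism/double-point argument reducing to three basic surfaces --- survives the relativization, and that the resulting universal polynomial has no $B$-dependence beyond the classes $\epsilon(a,b,c)$. One must also note that for a family of surfaces only the three classes $q_*(c_1(\O(\C))^a c_1(T_{\S/B})^b c_2(T_{\S/B})^c)$ arise, since all relative Chern numbers of $T_{\S/B}$ and $\O(\C)$ are generated by these (the relative analogue of the fact that the cobordism ring of surfaces is generated by $(L^2,L\cdot K,K^2,c_2)$), giving the statement of the proposition. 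Finally, the degree bound of Lemma \ref{lemmaUniversal}: each $\epsilon(a,b,c)$ lies in $A^{a+b+2c-2}(B)$, the relevant monomials in the EGL polynomial for $p^{[i]}_*(c(T_{\C_B^{[i]}/B}))$ that contribute in codimension $\delta$ have total $\epsilon$-degree (counting an $\epsilon(a,b,c)$ with weight $a+b+2c-2$) at most $\delta$, and multiplying by the degree-$(\delta-i)$ polynomial $a_i$ in $g$ keeps us in degree $\leq\delta$; taking the degree-$\delta$ part yields $\gamma(\C)$ as a polynomial of degree $\leq\delta$, which in the proposition one then argues is exactly $\delta$.
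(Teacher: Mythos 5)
Your proposal follows the paper's own route: reduce via \eqref{gooddef} to the universality of the classes $p^{[i]}_*(c(T_{\C_B^{[i]}/B}))$, express $g$ through the $\epsilon(a,b,c)$, rewrite the pushforward over $\C_B^{[i]}$ as a pushforward over $\S_B^{[i]}$ using Lemma \ref{lci}, and invoke a relative Ellingsrud--G\"ottsche--Lehn theorem --- this is exactly Theorem \ref{EGLrel}, whose proof runs the EGL recursion over $B$ with $\int_S$ replaced by $q_*$, much as you describe.

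Two points need repair. First, a small slip: relative adjunction gives $2g-2 = q_*\bigl(c_1(\O(\C))^2 - c_1(\O(\C))\,c_1(T_{\S/B})\bigr)$, with a minus sign. Second, and more substantively, your argument for the degree bound does not work as stated: you bound the \emph{codimension} of a monomial $\prod_j\epsilon(a_j,b_j,c_j)$ by $\sum_j(a_j+b_j+2c_j-2)=\delta$, but this does not bound the number of factors, since classes such as $\epsilon(2,0,0)$ or $\epsilon(0,0,1)$ lie in $A^0(B)$, so a codimension-$\delta$ monomial may contain arbitrarily many of them. The degree bound must instead be built into the relative EGL statement itself: the universal polynomial expressing $q^{[n]}_*P$ has degree $\leq n$ in the classes $q_*p(T_{\S/B},\F_1,\ldots,\F_l)$, because the EGL reduction writes $q^n_*\widetilde P$ as a sum of terms $q^m_*\widetilde P_m$ with $m\leq n$, each of which is a product of at most $m$ pushforwards from $\S$. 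With that clause, $p^{[i]}_*(c(T_{\C_B^{[i]}/B}))$ has degree $\leq i$, the coefficient $a_i$ has degree $\delta-i$ in $g$ (itself of degree $1$ in the $\epsilon$'s), and the bound $\leq\delta$ follows.
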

\begin{proof}
We can view $g$ as an element of $A^0(B)$. In fact we have
\begin{align*}
2g - 2 		&= p_*(c_1(T_{\C/B}^\vee))\\
			&= p_*(c_1(\O(\C)|_\C)-c_1(T_{\S/B}|_\C))\\
			&= q_*((c_1(\O(\C)))^2-c_1(\O(\C))c_1(T_{\S/B}))\,.
\end{align*}
By the equation \eqref{gooddef}, it suffices therefore to prove that the degree-$\delta$ parts of the classes $p^{[i]}_*(c(T_{\C_B^{[i]}/B}))$ can be expressed universally as polynomials of degree $i$ in the classes $\epsilon(a,b,c)$. By Lemma \ref{lci}, we have the equality
\begin{equation}\label{eqInt}
p^{[i]}_*(c(T_{\C_B^{[i]}/B}))=q^{[i]}_*\left(\frac{c(T_{\S_B^{[i]}/B})}{c(\O(\C)^{[i]})} c_i(\O(\C)^{[i]})\right)\,,
\end{equation}
and hence the lemma follows by the following generalisation of Theorem \ref{EGL}.
\end{proof}

Let $q\colon \S \rightarrow B$ proper and and smooth relative dimension $2$. Write
\[q^{[n]}\colon \S_B^{[n]}=\hilb^n(\S/B)\rightarrow B \quad\mbox{and}\quad q^n\colon \S_B^n=\S\times_B\cdots\times_B\S\rightarrow B \]
for the structure morphisms.

\begin{theorem}\label{EGLrel}
Let $\F_1,\ldots,\F_l$ be vector bundles on $\S$ of respective ranks $r_1,\ldots,r_l$. Let $P$ be a polynomial in the Chern classes of $T_{\S_B^{[n]}/B}$ and the Chern classes of the bundles $\F_i^{[n]}$. Then there is a universal polynomial $Q$, depending only on $P$, of degree $\leq n$ in the classes in $A^*(B)$ of the form $q_*p(T_{\S/B},\F_1,\ldots,\F_l)$, with $p$ a polynomial in the Chern classes of the bundles in the brackets and the ranks $r_1,\ldots,r_l$, such that we have
\[q^{[n]}_* P = Q\,.\]
\end{theorem}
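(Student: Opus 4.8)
The plan is to follow the strategy of Ellingsrud–Göttsche–Lehn, but carried out relatively over $B$. I will first reduce to the case in which $\S\to B$ is a \emph{product}, i.e. $\S = S\times B$ for a fixed smooth projective surface $S$ and the bundles $\F_i$ are pulled back from $S$; this is the shape in which the original EGL argument is run. The reduction uses a stratification/base-change argument: étale-locally (or after a stratification of $B$), the family $\S\to B$ looks like a product, and all the relevant constructions — the relative Hilbert scheme $\S_B^{[n]}$, the tautological bundles $\F_i^{[n]}$, the relative tangent bundle $T_{\S_B^{[n]}/B}$ — are compatible with base change (Lemma~\ref{vlak} guarantees $q^{[n]}$ is smooth of relative dimension $2n$, so the relative tangent bundle commutes with base change). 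The key point of this first step is that the \emph{universal polynomial} $Q$ produced in the product case will automatically glue, because the classes $q_*p(T_{\S/B},\F_1,\dots,\F_l)$ are themselves defined via the Gysin pushforward $q_*$ and hence are compatible with the strata; and once we know $q_*^{[n]}P$ is a polynomial in these classes on each stratum with the \emph{same} universal polynomial $Q$, it holds on all of $B$.

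Next, in the product case, I would run the EGL induction. Recall the heart of their method: one introduces the incidence variety $\S_B^{[n,n+1]}$ parametrizing flags $Z\subset Z'$ with $\mathrm{length}(Z)=n$, $\mathrm{length}(Z')=n+1$, together with the residual point $x\in\S$; this sits in a diagram with $\S_B^{[n]}$, $\S_B^{[n+1]}$ and $\S$. The class $\S_B^{[n,n+1]}$ can be realized as a projective bundle over $\S_B^{[n]}\times_B\S$ (this is where EGL's geometric input enters, via the structure of the punctual Hilbert scheme), and the tautological bundles and relative tangent bundles on $\S_B^{[n+1]}$ pull back to $\S_B^{[n,n+1]}$ in a controlled way — there are exact sequences relating $\F_i^{[n+1]}$ to $\F_i^{[n]}$ and the residual fibre, and similarly for $T_{\S_B^{[n+1]}/B}$. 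Pushing a polynomial expression down along $\S_B^{[n,n+1]}\to\S_B^{[n]}$ (a projective bundle pushforward, which is purely formal in the Chern classes of the bundle) and then, by induction on $n$, down to $B$, reduces everything to classes of the form $q_*^{[n-1]}(\cdots)$ and $q_*(p(T_{\S/B},\F_i))$. The bookkeeping of the degree bound $\deg Q\le n$ comes from the fact that each inductive step pushes forward from something of relative dimension $2n$ over $B$ and the correction terms live on $\S\times_B(\text{lower Hilbert scheme})$, contributing relative dimension at most $2$ more each time.

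There is a mild subtlety I should address: the original EGL theorem is stated for \emph{integrals} $\int_{S^{[n]}}P$ landing in $\ZZ$, i.e. over a point, whereas here we need the statement over an arbitrary base $B$ with $q_*^{[n]}P\in A^*(B)$. But their proof is entirely cohomological/Chow-theoretic and consists of identities among pushforwards along smooth proper maps; nothing in it requires the base to be a point. So the "relative EGL" is really just the observation that their argument is natural in $B$. Concretely I would state and use the relative versions of: (i) the bundle structure of the incidence variety over $\S_B^{[n]}\times_B\S$; (ii) the exact sequences for tautological bundles under $Z\subset Z'$; (iii) the exact sequence $T_{\S_B^{[n,n+1]}/B}$ in terms of $T_{\S_B^{[n]}/B}$, the relative tangent bundle of $\S/B$, and tautological classes. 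Each of these is proved fibrewise, using that everything is flat/smooth over $B$, and then the computation proceeds formally.

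The main obstacle I expect is \emph{not} any single deep new idea — it is the careful verification that every geometric input of EGL (especially the description of the punctual Hilbert scheme and the incidence variety as a projective bundle, and the exact sequences governing how tautological and tangent bundles change along $\S_B^{[n]}\leftarrow\S_B^{[n,n+1]}\rightarrow\S_B^{[n+1]}$) is genuinely local over $B$ and survives base change, together with tracking that the resulting universal polynomial $Q$ is independent of the stratum and of $B$, so that it glues. Put differently: the hard part is the functoriality/naturality argument that lets one upgrade EGL from a point to an arbitrary base, plus the degree bookkeeping to get $\deg Q\le n$; the computational core is then a direct transcription of \cite{EGL} with $\int_{(-)}$ replaced everywhere by $q_*^{(-)}$.
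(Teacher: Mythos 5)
Your second and third paragraphs are essentially the paper's proof: the author observes that the EGL induction (Proposition 3.1 of \cite{EGL}) goes through verbatim with $S^{[n+1]}\times S^m$ replaced by $\S_B^{[n+1]}\times_B\S_B^m$, the tangent bundles replaced by relative tangent bundles, and the integrals replaced by push-forwards to $B$. This reduces $q^{[n]}_*P$ to $q^n_*\widetilde{P}$ for a universal polynomial $\widetilde{P}$ in the Chern classes of $T_{\S/B}$, the $\F_i$, and the diagonal sheaves $\O_\Delta$ on $\S_B^n$; one then uses Grothendieck--Riemann--Roch to write $ch(\O_\Delta)=\Delta_*(td(-T_{\S/B}))$, the excess intersection formula to collapse products of diagonal classes onto partial diagonals $\S_B^m\hookrightarrow\S_B^n$, and finally the identity $q_*\alpha_1\cdots q_*\alpha_m=(q^m)_*(pr_1^*\alpha_1\cdots pr_m^*\alpha_m)$. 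The degree bound $\deg Q\leq n$ comes precisely from this last step: $q^n_*\widetilde{P}$ becomes a sum of terms $q^m_*\widetilde{P}_m$ with $m\leq n$, each contributing a monomial of degree $m$ in the classes $q_*p(\cdots)$. Your sketch stops before this diagonal bookkeeping, which is where the degree bound actually lives, but that material is already in \cite{EGL}, so this is a matter of detail rather than of substance.

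The genuine problem is your first step. A smooth projective family of surfaces $\S\to B$ is \emph{not} étale-locally, nor after any stratification of $B$, isomorphic to a product $S\times B$ with $S$ a fixed surface: the fibres can vary in moduli (a non-isotrivial family of K3 or general-type surfaces), so no stratification makes the family constant. Moreover, even granting such a stratification, an identity of classes in $A^*(B)$ — here the push-forwards are Gysin push-forwards in the sense of \cite[Chapter 17]{Fu} — cannot be verified stratum by stratum: restriction to a locally closed stratum is far from injective on Chow or bivariant groups, so agreement on each stratum would not imply the identity on $B$. Fortunately this reduction is also unnecessary: nothing in your paragraphs two and three uses the product structure, only the relative constructions (the incidence variety over $\S_B^{[n]}\times_B\S$, the tautological exact sequences, the relative tangent bundles), all of which exist for an arbitrary smooth family. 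Delete the reduction and run the relative induction directly, as the paper does.
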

\begin{proof}
The argument given in \cite{EGL} directly generalises to the relative case. For the case $\S = S\times B$, see also \cite{KT2}, Section 4. In fact, Proposition 3.1 in \cite{EGL} still holds if we replace $S^{[n+1]}\times S^m$ by $\S_B^{[n+1]}\times_B\S_B^m$, the bundle $T_{S^{[i]}}$ by the relative tangent bundle $T_{\S_B^{[n]}/B}$ and the integrals by push-forward to $B$. It follows that we can find a universal polynomial $\widetilde{P}$ in the Chern classes of the sheaves $T_{\S/B}$, $\O_\Delta$ and the $\F_i$, pulled back along the several projections
\[pr_i\colon \S_B^n\rightarrow \S \quad\mbox{and}\quad pr_{ij}\colon \S_B^n\rightarrow \S\times_B \S\,,\]
such that we have an equation
\[q^{[n]}_*P =q^n_* \widetilde{P}\,.\]
By Grothendieck-Riemann-Roch we have
\[ch(\O_\Delta) = \Delta_*(td(-T_{\S/B}))\]
for the projections $pr_i\colon \S\times_B\S\rightarrow \S$. It follows that $\widetilde{P}$ is a polynomial in the Chern classes the bundles $pr_i^*T_{\S/B}$ and $pr_i^*\F_i$ and the classes $p_{ij}^*[\Delta]$. By the excess intersection formula, a product of classes of the latter form is a polynomial in Chern classes of the bundles $pr_i^*T_{\S/B}$, intersected with a product
\[\Delta^{k_1}\times\cdots\times\Delta^{k_m}\colon \S_B^{m}\hookrightarrow \S_B^n\]
of diagonals
\[\Delta^{k_i}\colon\S\hookrightarrow \S_B^{k_i}\]
for integers
\[ \quad k_1,\ldots,k_m\geq 1 \,,\quad k_1+\ldots+k_m=n\,.\]
It follows that $q^n_*\widetilde{P}$ is a sum of classes $q^m_*\widetilde{P}_m$ for $m=1,\ldots,n$ and polynomials $\widetilde{P}_m$ in the Chern classes of the bundles $pr_i^*T_{\S/B}$ and $pr_i^*\F_i$, pulled back along the several projections $\S_B^m\rightarrow \S$. Now use the fact that for classes $\alpha_1,\ldots,\alpha_m\in A^*(\S)$ we have
\[q_*\alpha_1\cdots q_*\alpha_m = (q^m)_*(pr_1^*\alpha_1\cdots pr_m^*\alpha_m)\]
in $A^*(B)$.
\end{proof}

\begin{proof}[Proof of Proposition \ref{propUniversal}]
By Lemma \ref{lemmaUniversal}, the class $\gamma(\C)$ can be expressed universally as polynomial $\gamma$ in classes $\epsilon(a,b,c)$ of degree $\leq\delta$. Now let $\C$ be the universal curve in a complete linear system $|L|$ on a surface $S$, and let $\PP^\delta \subset |L|$ be a general linear system. Let $\omega\in A^1(\PP^\delta)$ be the class of a hyperplane. As explained in the proof of \cite[Thm. 4.1]{KST}, the algorithm of \cite{EGL} applied to the right hand side of $\eqref{eqInt}$ for $i=\delta$ produces a term $c_2(S)^\delta/\delta!$ coming from the term
\[c_{2\delta}(T_{S^{[\delta]}})\omega^\delta = c_{2\delta}(T_{S_{\PP^\delta}^{[\delta]}/\PP^\delta})\omega^\delta\,.\] As noted in Remark \ref{reKST}, the integrals in \cite{KST} differ by a factor $c(T_{\PP^\delta})$. However, this does not affect the term $c_{2\delta}(T_{S^{[\delta]}})\omega^\delta$. It follows that $\gamma$ is a polynomial of degree $\delta$ in classes the classes $\epsilon(a,b,c)$.
\end{proof}

\begin{proof}[Proof of Theorem \ref{Theorem A}]
Combine Proposition \ref{sup} and Proposition \ref{propUniversal}. We have completed the proof of our first main result.
\end{proof}

\subsection{Multiplicativity}
We will check that the class $\gamma$ has the expected multiplicative behaviour, cf. \cite{KP2} and \cite{Go}. Let $B$ be a base scheme. For $k=1,2$, let $\S_k\rightarrow B$ be proper and smooth of relative dimension $2$, and let $\C_k$ be a relative effective divisor on $\S_k$, and write $p_k\colon \C_k\rightarrow B$ for the morphism to $B$. Let $\C$ be the union
\[\C\coloneqq \C_1\amalg\C_2\subset  \S_1\amalg\S_2\rightarrow B\,.\]
We have the following relations:
\begin{align}
\C_B^{[n]}					&= \coprod_{i+j=n} (\C_1)_B^{[i]} \times_B (\C_2)_B^{[j]}\,;	\nonumber\\
p^{[i]}_*c(T_{\C_B^{[n]}/B})	&= \sum_{i+j=n} p^{[i]}_{1*}c(T_{(\C_1)_B^{[i]}/B}) \cdot p^{[i]}_{2*}c(T_{(\C_2)_B^{[j]}/B}) \label{prela}\,.
\end{align}
For $k=1,2$, let $g_k$ be the arithmetic genus of a curve in the family $\C_k\rightarrow B$, so we have
\begin{equation*}
g-1=g_1-1+g_2-1\,,
\end{equation*}
with $g$ the genus of a curve in the family $\C\rightarrow B$. It follows easily from \eqref{prela} that we have the identity
\begin{equation}\label{prodnvir}
n^\vir_{g-\delta}(\C) = \sum_{i+j=\delta} n^\vir_{g_1-i}(\C_1)\, n^\vir_{g_2-j}(\C_2)\,.
\end{equation}
For any $i\geq 0$, let $\gamma_i$ be the degree-$i$ part of $n^\vir_{g-i}$. We record the following lemma.

\begin{lemma}\label{prodgamma}
Let $B$ be complete and let $\C_1$ and $\C_2$ be given as above. Assume that for $k=1,2$, the line bundle $\O(\C_k)$ on $\S_k$ satisfies $\amp$. Then we have the relation
\[\gamma_\delta(\C) = \sum_{i+j=\delta} \gamma_i(\C_1)\,\gamma_j(\C_2)\]
\end{lemma}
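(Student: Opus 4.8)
The plan is to deduce Lemma \ref{prodgamma} from the already-established product formula \eqref{prodnvir} for the full classes $n^\vir_r$, by extracting the appropriate graded piece and controlling the error terms using Proposition \ref{sup}. The starting point is that, under \amp\ for $\O(\C_k)$, Proposition \ref{sup} tells us that $n^\vir_{g_k-i}(\C_k)\in A^{\geq i}(B)$ for each $i$; the same applies to $\O(\C)=\O(\C_1)\amalg\O(\C_2)$, which visibly also satisfies \amp\ (the condition is fibrewise, and on each fibre the linear system splits as a product), so $n^\vir_{g-\delta}(\C)\in A^{\geq\delta}(B)$.

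Given this, the argument is a degree count. First I would take the identity \eqref{prodnvir}, which holds in $A^*(B)$ as an equality of (inhomogeneous) bivariant classes, and project onto the degree-$\delta$ component. On the left, the degree-$\delta$ part of $n^\vir_{g-\delta}(\C)$ is by definition $\gamma_\delta(\C)$. On the right, a summand $n^\vir_{g_1-i}(\C_1)\,n^\vir_{g_2-j}(\C_2)$ with $i+j=\delta$ lies in $A^{\geq i}(B)\cdot A^{\geq j}(B)\subseteq A^{\geq i+j}(B)=A^{\geq\delta}(B)$, again using Proposition \ref{sup} for each factor. Hence the degree-$\delta$ part of that product is exactly the product of the degree-$i$ part of $n^\vir_{g_1-i}(\C_1)$ with the degree-$j$ part of $n^\vir_{g_2-j}(\C_2)$, namely $\gamma_i(\C_1)\,\gamma_j(\C_2)$, with no contribution from lower-degree pieces of either factor because there are none. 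Summing over $i+j=\delta$ gives the claimed relation.

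The one point that needs a word of care — and the closest thing to an obstacle — is the bookkeeping of "degree" in the bivariant Chow ring $A^*(B)$: one must make sure that taking the degree-$\delta$ graded piece is compatible with the ring multiplication in the sense that the degree-$\delta$ part of a product of two classes, each supported in degrees $\ge i$ and $\ge j$ with $i+j=\delta$, is the product of their respective minimal pieces. This is immediate from the grading of $A^*$, but it does rely essentially on the vanishing $A^{<i}(B)$-components of $n^\vir_{g_k-i}(\C_k)$, i.e.\ on \amp; without it the lower-degree parts of the factors would spill into degree $\delta$ and the formula would fail. So the proof is essentially: invoke \amp\ and Proposition \ref{sup} to get the degree estimates, then read off the graded piece of \eqref{prodnvir}.

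\begin{proof}
Since the line bundles $\O(\C_k)$ on $\S_k$ satisfy \amp, so does $\O(\C)=\O(\C_1)\amalg\O(\C_2)$ on $\S=\S_1\amalg\S_2$; indeed the condition is checked fibrewise, and over $b\in B$ the linear system of $\O(\C)_b$ is $|\O(\C_1)_b|\times|\O(\C_2)_b|$, with $\delta$-very ampleness and local constancy of the $H^0$'s inherited from the two factors. By Proposition \ref{sup} we therefore have
\[n^\vir_{g_1-i}(\C_1)\in A^{\geq i}(B)\,,\quad n^\vir_{g_2-j}(\C_2)\in A^{\geq j}(B)\,,\quad n^\vir_{g-\delta}(\C)\in A^{\geq \delta}(B)\,.\]
Now extract the degree-$\delta$ component of the identity \eqref{prodnvir} in $A^*(B)$. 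The degree-$\delta$ part of the left hand side is, by definition, $\gamma_\delta(\C)$. On the right hand side, for each pair $i+j=\delta$ the product $n^\vir_{g_1-i}(\C_1)\,n^\vir_{g_2-j}(\C_2)$ lies in $A^{\geq i}(B)\cdot A^{\geq j}(B)\subseteq A^{\geq \delta}(B)$, and its degree-$\delta$ component is the product of the degree-$i$ part of $n^\vir_{g_1-i}(\C_1)$ with the degree-$j$ part of $n^\vir_{g_2-j}(\C_2)$, that is, $\gamma_i(\C_1)\,\gamma_j(\C_2)$; no lower-degree pieces contribute since each factor has none. Summing over $i+j=\delta$ gives
\[\gamma_\delta(\C)=\sum_{i+j=\delta}\gamma_i(\C_1)\,\gamma_j(\C_2)\,,\]
as claimed.
\end{proof}
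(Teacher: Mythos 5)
Your proof is correct and is essentially the paper's own argument, which reads in full: ``By Proposition \ref{sup}, it follows directly from \eqref{prodnvir}, by taking degree-$\delta$ parts on both sides of the equation.'' Your only extra step --- verifying \amp\ for the disjoint union --- is not actually needed (the degree-$\delta$ part of the left side is $\gamma_\delta(\C)$ by definition), and your parenthetical description of the fibrewise linear system as a product is slightly off (it is the projectivization of a direct sum), but neither affects the argument.
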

\begin{proof}
By Proposition \ref{sup}, it follows directly from \eqref{prodnvir}, by taking degree-$\delta$ parts on both sides of the equation.
\end{proof}

\section{Application: Plane Curves in $\PP^3$}
We will apply the results to the problem of counting $\delta$-nodal plane curves of degree $d$ in $\PP^3$. As we will see below, the space of such curves has dimension
\begin{equation}\label{defn}
n \coloneqq \frac{d(d+3)}{2} + 3 - \delta\,.
\end{equation}
Let $N_{\delta,d}$ denote the number of $\delta$-nodal plane curves of degree $d$ that intersect general lines $\ell_1,\ldots,\ell_n\subset \PP^3$. The main result of this section is that for each $\delta$, and $d\geq\delta$, the numbers $N_{\delta,d}$ are given by polynomial of degree $\leq 2\delta + 9$ in $d$.

Let $\Gr\coloneqq\Gr(2,\PP^3)$ be the Grassmannian of planes in $\PP^3$ and let $\U$ be the tautological vector bundle on $\Gr$. Let $\O_\Gr(1)$ be the bundle corresponding to the hyperplane class via the identification $\Gr=\check{\PP}^3$. These two bundles are related via the tautological short exact sequence
\begin{equation}\label{taut}
0\rightarrow\U\rightarrow\CC^4\otimes\O_\Gr\rightarrow \O_\Gr(1)\rightarrow 0\,.
\end{equation}
Let $q\colon\S=\PP(\U)\rightarrow \Gr$ be the universal plane. As a family of subvarieties of $\PP^3$, it comes with a relatively very ample line bundle $\O_\S(1)$. Choose an integer $d>1$ and consider the line bundle $\L\coloneqq\O_\S(d)$. As in Section \ref{relkst}, we can form the projective bundle
\[B\coloneqq |\L/\Gr| = \PP(q_*\L)\xrightarrow{\mathmakebox[1cm]{\pi}}\Gr\,,\]
which comes with a canonical bundle $\O_B(1)$. The fibre over a point $[V]\in\Gr$ corresponding to a plane $V\subset \PP^3$ is the complete linear system $|\O_V(d)|$. In particular $\pi$ is of relative dimension $r-1$, with
\[r = \mbox{rank}(q_*\L) = \frac{(d+1)(d+2)}{2}\,.\]
Moreover, it follows that $B$ parametrizes planes in $\PP^3$, together with a degree $d$ curve on that plane. As a planar curve in $\PP^3$ of degree $>1$ lies in a unique plane, the variety in fact parametrizes planar curves in $\PP^3$.

Let $p\colon\C\rightarrow B$ be the universal curve. Then $\C$ is a relative effective divisor on the family $\S_B=\S\times_\Gr B$ and we have
\[\O_{\S_B}(\C) = \L(1) = \L \otimes\O_B(1)\,.\]
Now let $\delta \leq d$. Note that for $b\in B$, we have
\[\S_b\cong \PP^2 \quad \mbox{and} \quad\L(1)|_{\S_b}\cong \O_{\PP^2}(d)\,,\]
so $\L(1)$ satisfies $\amp$, as $\O(d)$ is $\delta$-very ample \cite{BS}. By Lemma \ref{keyex}, the locus $B(\delta)\subset B$ has codimension $\delta$ and we have $\gamma(\C) = \left[\overline{B(\delta)}\right]\subset B$. For lines $\ell_1,\ldots,\ell_n$ in $\PP^3$, denote the (reduced) locus of curves intersecting the line $\ell_i$ by $B_{\ell_i}\subset B$, and let
\[B_{\ell_1,\ldots,\ell_n}=B_{\ell_1}\cap\ldots\cap B_{\ell_n}\subset B\]
be the (scheme theoretic) intersection. We will use the notation
\[\partial B(\delta) = \left.\overline{B(\delta)} \middle\backslash B(\delta)\right.\,.\]

\begin{proposition}\label{finred}
For
\[n = \frac{d(d+3)}{2} + 3 - \delta\]
as above and general lines $\ell_1,\ldots,\ell_n$, we have
\[B_{\ell_1,\ldots,\ell_n}\cap \partial B(\delta) = \emptyset\,.\]
Moreover, the intersection $B_{\ell_1,\ldots,\ell_n}\cap B(\delta)$ is finite and reduced, and its degree is given by
\[N_{\delta,d}=\int_{B_{\ell_1,\ldots,\ell_n}} \gamma(\C|_{B_{\ell_1,\ldots,\ell_n}})\,.\]
\end{proposition}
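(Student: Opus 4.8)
The plan is to show that each of the relevant loci can be swept out by moving the lines $\ell_1,\dots,\ell_n$, so that a dimension count together with a Bertini/Kleiman-type transversality argument forces the bad intersections to be empty or generically transverse. First I would set up the incidence variety. Over the space $(\check{\PP}^3)^n$ of $n$-tuples of... rather, let $\mathbb{G} = \Gr(1,\PP^3)^n$ be the space of $n$-tuples of lines, and form the incidence scheme
\[
I \;=\; \big\{(b,\ell_1,\dots,\ell_n)\in B\times\mathbb{G}\;\big|\; \C_b\cap\ell_i\neq\emptyset \text{ for all }i\big\},
\]
with projections $\mathrm{pr}_B\colon I\to B$ and $\mathrm{pr}_\mathbb{G}\colon I\to\mathbb{G}$. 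The fibre of $\mathrm{pr}_B$ over a curve $C$ lying in a plane $V$ is $\prod_i\{\ell_i\mid \ell_i\cap C\neq\emptyset\}$; since a general line in $\PP^3$ misses the plane $V$ and a line meeting $C$ forms a codimension-one condition (the lines meeting a fixed curve form a hypersurface in $\Gr(1,\PP^3)$), this fibre has codimension $n$ in $\mathbb{G}$, so $\dim I = \dim B$. This is the mechanism behind the expected count in \eqref{defn}: $B_{\ell_1,\dots,\ell_n} = \mathrm{pr}_B(\mathrm{pr}_\mathbb{G}^{-1}(\ell_1,\dots,\ell_n))$.

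Next I would prove the emptiness statement $B_{\ell_1,\dots,\ell_n}\cap\partial B(\delta)=\emptyset$. Restrict the incidence correspondence to $\partial B(\delta)$, i.e.\ consider $I_\partial = \mathrm{pr}_B^{-1}(\partial B(\delta))\subset I$. By hypothesis \dimb\ (valid here by Lemma \ref{keyex}), $\partial B(\delta)$ has codimension $>\delta$ in $B$, hence $\dim\partial B(\delta) < \dim B - \delta = n - 3$. Since $\mathrm{pr}_B|_{I_\partial}$ has fibres of dimension $\dim\mathbb{G} - n$ (same codimension-$n$ count as above, which holds over any curve), we get $\dim I_\partial < \dim\mathbb{G} - 3 < \dim\mathbb{G}$, so $\mathrm{pr}_\mathbb{G}(I_\partial)$ is a proper closed subset of $\mathbb{G}$; for $(\ell_1,\dots,\ell_n)$ outside it, $B_{\ell_1,\dots,\ell_n}\cap\partial B(\delta)$ is empty. (One must check that $\mathrm{pr}_\mathbb{G}(I_\partial)$ is genuinely closed, which follows since $I$ is proper over $\mathbb{G}$ as $B$ and the Grassmannian are projective.) The same argument applied to $B(\delta)$ itself, using that $B(\delta)$ has pure codimension exactly $\delta$, shows that for general lines $B_{\ell_1,\dots,\ell_n}\cap B(\delta)$ is finite.

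For reducedness, I would invoke generic smoothness in characteristic $0$: after shrinking $\mathbb{G}$ to a dense open, the morphism $\mathrm{pr}_\mathbb{G}$ restricted to $I_\delta := \mathrm{pr}_B^{-1}(B(\delta))$ — which is smooth, since $B(\delta)$ is smooth (Lemma \ref{keyex} gives smoothness of the relevant Hilbert schemes, and the locus of nodal curves is smooth of codimension $\delta$) and the incidence condition cuts it smoothly — is smooth over a dense open of $\mathbb{G}$. Hence for general $(\ell_i)$ the fibre $B_{\ell_1,\dots,\ell_n}\cap B(\delta) = \mathrm{pr}_\mathbb{G}^{-1}(\ell_1,\dots,\ell_n)\cap I_\delta$ is a finite reduced scheme; one should verify here that the scheme structure on this fibre agrees with the scheme-theoretic intersection $B_{\ell_1,\dots,\ell_n}\cap B(\delta)$ inside $B$, which follows from the transversality of each hypersurface $B_{\ell_i}$ to $\overline{B(\delta)}$ along $B(\delta)$ (each $B_{\ell_i}$ is a divisor, and the Kleiman transversality theorem applied to the action moving the lines ensures the divisors meet $\overline{B(\delta)}$ properly and transversally at the generic point of each component of the intersection). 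Finally, given all this, the degree formula $N_{\delta,d} = \int_{B_{\ell_1,\dots,\ell_n}}\gamma(\C|_{B_{\ell_1,\dots,\ell_n}})$ follows from Proposition \ref{sup} (applied to the complete base $B_{\ell_1,\dots,\ell_n}$, with the effective divisor $\C$ restricted to it, using the functoriality Lemma \ref{func}): $\gamma(\C|_{B_{\ell_1,\dots,\ell_n}})\cap[B_{\ell_1,\dots,\ell_n}]$ is the class of an effective cycle supported on $\overline{(B_{\ell_1,\dots,\ell_n})(\delta)} = B_{\ell_1,\dots,\ell_n}\cap B(\delta)$, which by the above is $N_{\delta,d}$ reduced points, so integrating gives $N_{\delta,d}$.

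The main obstacle I anticipate is not the dimension count but the comparison of scheme structures: ensuring that the set-theoretic identifications (fibre of the incidence variety versus the scheme-theoretic intersection $\bigcap_i B_{\ell_i}$ in $B$) are in fact scheme-theoretic identifications, and that "reduced and finite" for the fibre of $\mathrm{pr}_\mathbb{G}$ transfers to "reduced and finite" for $B_{\ell_1,\dots,\ell_n}\cap B(\delta)$. This requires knowing that each $B_{\ell_i}$ is not merely a divisor but one whose defining equation restricts transversally to $\overline{B(\delta)}$; the cleanest route is a group-action transversality argument — $\mathrm{PGL}_4$ acts on everything, acts transitively enough on tuples of lines, and Kleiman's theorem then gives the generic transversality of the translated divisors $B_{g\ell_i}$ to the fixed locus $\overline{B(\delta)}$, simultaneously handling emptiness of the boundary intersection and reducedness of the interior one.
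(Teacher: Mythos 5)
Your overall strategy --- an incidence correspondence over $\Gr(1,\PP^3)^n$, a dimension count to exclude $\partial B(\delta)$, generic smoothness for finiteness and reducedness, and then Lemma \ref{func} together with Proposition \ref{sup} for the degree formula --- is the same as the paper's. However, two specific steps do not go through as written.

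First, your incidence variety $I$ records only the condition $\C_b\cap\ell_i\neq\emptyset$, and you assert that over $B(\delta)$ ``the incidence condition cuts it smoothly.'' This is false: for a fixed curve $C$ the locus of lines meeting $C$ is a hypersurface in $\Gr(1,\PP^3)$ that is singular along the lines meeting $C$ in two points, tangentially, or at a node, so $I_\delta$ is not smooth and its generic smoothness must be argued rather than asserted. The paper sidesteps this by enlarging the incidence data to include the intersection points $p_i\in\ell_i\cap C$: its variety $P$ then fibres over $\C_B^n|_{\overline{B(\delta)}}$ with fibre $(\PP^2)^n$ (lines through a given point), so generic smoothness of $P$ reduces to generic smoothness of the universal curve over $\overline{B(\delta)}$, which holds because nodal curves are generically smooth. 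You would need either to add the points as the paper does, or to supply a separate argument that $I_\delta$ is a variety of dimension $\dim\mathbb{G}$ and then also discard the image of its singular locus. (Your dimension bookkeeping contains slips --- ``$\dim I=\dim B$'' and ``$\dim B-\delta=n-3$'' should read $\dim I_\delta=\dim\mathbb{G}$ and $\dim B-\delta=n$ --- but these do not affect the conclusions.)

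Second, and more seriously, the appeal to Kleiman's transversality theorem for the scheme-structure comparison does not apply: that theorem requires the group to act transitively on the ambient variety, and $\mathrm{PGL}_4$ is very far from transitive on $B$ (its orbits are loci of curves of fixed projective type), so the divisors $B_{g\ell_i}$ are not general translates of a fixed subvariety under a transitive action. You correctly single out the comparison of the fibre of $\mathrm{pr}_{\mathbb{G}}$ with the scheme-theoretic intersection $\bigcap_i B_{\ell_i}\cap B(\delta)$ as the crux, but the proposed resolution fails. The paper resolves it directly: after discarding tuples for which some $\ell_i$ lies in the plane $V$ spanned by the curve, every scheme-theoretic fibre of $\ell_i\times_{\PP^3}\C\rightarrow B_{\ell_i}$ is a single reduced point, so this map is an isomorphism over that open locus; hence the fibre $P_{\vec{\ell}}$, being the fibre product over $B$ of the schemes $\ell_i\times_{\PP^3}\C$ with $\overline{B(\delta)}$, maps isomorphically onto the scheme-theoretic intersection $B_{\ell_1,\ldots,\ell_n}\cap B(\delta)$, and reducedness of the fibre of the incidence variety transfers to the intersection in $B$. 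You should replace the Kleiman step with this (or an equivalent) argument.
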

\begin{remark}
More precisely, in the proof we will construct a non-empty Zariski open subset
\[U\subset \Gr(1,\PP^3)^n\]
of the $n$-fold product of the Grassmannian of lines in $\PP^3$, such that the proposition holds for any $n$-tuple of lines $(\ell_1,\ldots,\ell_n)\in U$.
\end{remark}
\begin{remark}
It should be noted that for $\delta>0$, the scheme $B_{\ell_1,\ldots,\ell_n}$ is singular. In fact, for every $i$, a local computation shows that the singular locus of the variety $B_{\ell_i}$ is the divisor of curves $C\in B_{\ell_i}$ such that $
\ell_i$ lies in the plane spanned by $C$.
\end{remark}
\begin{proof}
We will proof the first two statements by an argument in the spirit of Lemma 4.7 in \cite{KP1}. We have $n = r + 2 - \delta = \dim B -\delta$. It follows that, cf. loc. cit., the expected dimension of $B_{\ell_1,\ldots,\ell_n}\cap B(\delta)$ is
\begin{equation}\label{eqExpDim}
\dim B - n - \delta = 0\,.
\end{equation}
Let $\Gr(1,\PP^3)$ be the Grassmannian of lines in $\PP^3$, and let $\LL\rightarrow \Gr(1,\PP^3)$ be the universal line. Let $P$ be the limit of the following diagram:
\begin{equation}\label{finreddiag}
\begin{tikzcd}
& & \overline{B(\delta)}\arrow[d] \\
&\C_B^n \arrow[r]\arrow[d] & B \\
\LL^n \arrow[r]\arrow[d] &(\PP^3)^n\\
\Gr(1,\PP^3)^n \, ,
\end{tikzcd}
\end{equation}
in which we use the notation
\[\C_B^n = \C\times_B\cdots\times_B\C\,.\]
Then $P$ parametrises the following data:
\begin{itemize}
\item lines $\ell_1,\ldots\ell_n \subset \PP^3$;
\item points $p_1,\ldots,p_n\in \PP^3$;
\item a plane $V \subset \PP^3$;
\item a curve $C\in \overline{B(\delta)}$;
\end{itemize}
subject to the following conditions:
\begin{itemize}
\item $C\subset V$;
\item $p_i\in \ell_i$ for $i=1,\ldots,n$;
\item $p_i\in C$ for $i=1,\ldots,n$.
\end{itemize}
The horizontal maps in the diagram are flat with relative dimensions $2n$ and $n$ respectively. Since $\overline{B(\delta)}$ has dimension $n$, it follows that
\[\dim(P) = 4n = \dim(\Gr(1,\PP^3)^n)\,.\]
A curve $C\in B(\delta)$ has a dense smooth open subset $C^\circ$
(as it is nodal). Therefore also the universal curve $\C\rightarrow B$ restricted to $\overline{B(\delta)}$ has this property (the latter being reduced).
As $P\rightarrow \C_B^n|_{\overline{B(\delta)}}$ is smooth (with fibre $\cong(\PP^2)^n$), we see that $P$ is generically smooth.
In particular, the singular locus $P^{\mathrm{sing}}$ of $P$ has dimension $<4n$. Consider the morphism
\[\phi\colon P\rightarrow \Gr(1,\PP^3)^n\,.\]
Let $U_1=\Gr(1,\PP^3)^n \backslash \phi(P^{\mathrm{sing}})$ be the complement of the image of the singular locus in $\Gr(1,\PP^3)^n$. As $\Gr(1,\PP^3)^n$ has dimension $4n$, the open $U_1\subset \Gr(1,\PP^3)^n$ is non-empty. Since $\phi^{-1}(U_1)$ is smooth, there is a non-empty open $U_2\subset U_1$ such that the morphism $\phi$ is finite and reduced over $U_2$. Moreover, the closed subsets
\[Z_i = \{(\vec{\ell}, \vec{p}, V, C)\in P\mid \ell_i\subset V\}\subset P\,, \quad i=0,\ldots,n \]
and
\[P_{\partial B(\delta)}=\{(\vec{\ell}, \vec{p}, V, C)\in P\mid C\in\partial B(\delta)\}\subset P\]
have positive codimension. Therefore the open
\[U=U_2 \cap \left(\Gr(1,\PP^3)^n \middle\backslash \phi\left(\bigcup_{i=1}^n Z_i\cup P_{\partial B(\delta)}\right)\right)\]
is non-empty.

Now let $\vec{\ell}=(\ell_1,\ldots,\ell_n)\in U$ be an $n$-tuple of lines and let
\[P_{\vec{\ell}} = \phi^{-1}(\vec{\ell}\,)\]
be the fibre over $\vec{\ell}$. Consider the morphism
\[\psi\colon P\rightarrow \overline{B(\delta)}\subset B\,.\]
For a point $[C]\in \overline{B(\delta)}$, the fibre of $P_{\vec{\ell}}$ over $[C]$ is the scheme
\begin{equation}\label{eqFibreP}
P_{\vec{\ell}}\cap \psi^{-1}([C]) = \prod_{i=1}^n (\ell_i\cap C)\subset (\PP^3)^n\,.
\end{equation}
Let $V\subset \PP^3$ be the plane spanned by $C$. By definition of $U$, we have $\ell_i\not\subset V$ for the lines $\ell_1,\ldots,\ell_n$ in $\vec{\ell}$. It follows that the intersection \[C\cap \ell_i\subset \ell_i\cap V\]
is a reduced point, if it is non-empty. Hence $P_{\vec{\ell}}$ maps isomorphically to its image in $B$.

On the other hand, the scheme \eqref{eqFibreP} is non-empty if and only if $C$ intersects the lines $\ell_i$. By definition of $U$, it follows that we have
\begin{align*}
\psi(P_{\vec{\ell}})	&= (B_{\ell_1,\ldots,\ell_n}\cap \overline{B(\delta)})_{red}\\
					&= (B_{\ell_1,\ldots,\ell_n}\cap B(\delta))_{red}\,.
\end{align*}

Finally, we will show that in fact
\[\psi(P_{\vec{\ell}})=B_{\ell_1,\ldots,\ell_n}\cap B(\delta)\,.\]
To see this, first note that $\psi(P_{\vec{\ell}})$ lies in the open subset $W\subset B$ consisting of curves $C$ with $\ell_i\not\subset V$ for $i=1,\ldots, n$ and $V$ the plane spanned by $C$. As noted before, for a curve $C\in W$, the scheme $C\cap \ell_i$ is empty or consists of a single reduced point. It follows that over $W$, the scheme
\[\ell_i\times_{\PP^3} \C\]
is mapped isomorphically to its image $B_{\ell_i}$ in $B$, since a \emph{scheme theoretic} fibre of
\[\ell_i\times_{\PP^3} \C_W\rightarrow B_{\ell_i}\cap W\]
is a \emph{reduced} point. We conclude that
\begin{align*}
\psi(P_{\vec{\ell}})	&= \psi\left((\ell_1\times_{\PP^3} \C)\times_B\cdots\times_B(\ell_n\times_{\PP^3} \C )\times_B \overline{B(\delta)}\right)\\
								&= B_{\ell_1}\cap\cdots\cap B_{\ell_n}\cap \overline{B(\delta)}\\
								&= B_{\ell_1,\ldots,\ell_n}\cap B(\delta)\,.								
\end{align*}

As the intersection $B(\delta)\cap B_{\ell_1,\ldots, \ell_n}$ is finite and reduced, it is \emph{transverse}, by \eqref{eqExpDim}. It follows by Lemma \ref{func}, Proposition \ref{sup} and Lemma \ref{keyex} that we have
\begin{align*}
\#(B(\delta)\cap B_{\ell_1,\ldots,\ell_n})
					&= \int_B ([\overline{B(\delta)}].[B_{\ell_1,\ldots,\ell_n}])\\
					&= \int_B \gamma(\C)\cap [B_{\ell_1,\ldots,\ell_n}]\\
					&= \int_{B_{\ell_1,\ldots,\ell_n}} \gamma(\C|_{B_{\ell_1,\ldots,\ell_n}})\,.\qedhere
\end{align*}
\end{proof}

The following lemma is essentially \cite{Zi}, Exercise 3.4. See also \cite{Fu}, Example 3.2.22. For completeness, we will include the proof.

\begin{lemma}\label{linelocus}
For a line $\ell\subset \PP^3$, the closed subvariety $B_\ell\subset B$ is a divisor, cut out by a section of the line bundle
\[\pi^*\O_\Gr(d)\otimes\O_B(1)\,.\]
\end{lemma}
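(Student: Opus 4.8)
The plan is to realize $B_\ell$ as the zero locus of an explicit section of a line bundle on $B$, working over the Grassmannian $\Gr = \Gr(2,\PP^3)$. Recall $B = \PP(q_*\L)$ with $\L = \O_\S(d)$ and $\S = \PP(\U)$ the universal plane. A point of $B$ over $[V]\in\Gr$ is a degree-$d$ curve $C\subset V$, i.e.\ a one-dimensional subspace of $H^0(V,\O_V(d))$, equivalently (up to scalar) a nonzero section $f_C$. The curve $C$ meets the line $\ell$ precisely when $f_C$ vanishes at some point of $\ell\cap V$. For a line $\ell$ not contained in $V$, the intersection $\ell\cap V$ is a single point $p = p_{\ell,V}$, and the condition becomes simply $f_C(p) = 0$. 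So first I would describe, on an open subset of $\Gr$ where $\ell\not\subset V$, the evaluation map at $p_{\ell,V}$ as a morphism of line bundles and identify $B_\ell$ locally with its vanishing; then I would globalize.

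**Identifying the bundle.**
The key step is to see that evaluation at $p_{\ell,V}$ extends to a global section. Concretely, $\ell$ determines a pencil inside $\Gr\cong\check\PP^3$: the planes containing a fixed point vary in a $\PP^2$, and restricting $\ell$ to $V$ gives, away from the locus $\{\ell\subset V\}$, a section of $\U^\vee$ (or its projectivization) picking out the point $p_{\ell,V}\in\PP(\U) = \S$; in other words $\ell$ gives a rational section $\Gr\dashrightarrow\S$, regular off the codimension-two (so harmless) locus where $\ell\subset V$. Pulling $\L = \O_\S(d)$ back along this section and composing with the tautological evaluation $q^*q_*\L\to\O_\S(d)$ — equivalently, using the universal quotient on $B = \PP(q_*\L)$, which comes with $\O_B(1)$ and a map $\pi^*q_*\L\twoheadrightarrow\O_B(1)$ — produces the desired section. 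Tracking degrees: evaluation at a point of the line $\ell\subset V$ contributes $\O_\Gr(d)$ (the $d$-th power of the hyperplane class on $\check\PP^3$, because $p_{\ell,V}$ moves linearly in the plane's coordinates and $f_C$ has degree $d$), while the choice of $f_C$ up to scalar contributes $\O_B(1)$. Hence the section lives in $\pi^*\O_\Gr(d)\otimes\O_B(1)$, and $B_\ell$ is its zero scheme. That it is reduced and a genuine divisor (not all of $B$) follows because a general degree-$d$ curve in a general plane does not meet $\ell$, so the section is not identically zero.

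**Executing cleanly.**
Rather than chasing the rational section, I would prefer the following clean formulation: form the incidence variety $I = \{([V],x)\in\Gr\times\PP^3 : x\in\ell\cap V\}$; over the open $\{\ell\not\subset V\}$ it maps isomorphically to $\Gr$, and its structure sheaf pushes forward (up to the harmless locus) to $\O_\Gr$. Then $B_\ell\cap W$, for $W\subset B$ the open where no containment occurs, is cut out by the image of the canonical section of $\pi^*\O_\Gr(d)\otimes\O_B(1)$ obtained by restricting $\C\hookrightarrow\S_B$ to $\ell\times_{\PP^3}\S_B$, exactly as in the proof of Proposition~\ref{finred}. Since $B\setminus W$ has codimension $\geq 2$ and $B$ is normal (it is a projective bundle over a smooth base), the divisor $B_\ell\cap W$ and its defining section extend uniquely across $B\setminus W$; so $B_\ell$ is globally the zero divisor of a section of $\pi^*\O_\Gr(d)\otimes\O_B(1)$.

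**Main obstacle.**
The genuine subtlety — the one point that needs care rather than bookkeeping — is the behavior along the locus $Z_\ell = \{[V]: \ell\subset V\}$, which has codimension $2$ in $\Gr$ hence codimension $2$ in $B$ after removing nothing. On $Z_\ell$ the "point $p_{\ell,V}$" is undefined and, worse, $B_\ell$ contains the entire fiber over $Z_\ell$ (every curve in a plane containing $\ell$ meets $\ell$), so naively one might fear $B_\ell$ acquires this whole codimension-$1$-in-$B_\ell$ but codimension-$2$-in-$B$ piece as an extra component. I expect the resolution to be exactly the normality/Hartogs argument above: the section extends, its zero locus is a Cartier divisor, and the fiber over $Z_\ell$ sits inside $B_\ell$ without being a separate component precisely because $B_\ell\to Z_\ell$ has the expected fiber dimension there — this is the content of the singular-locus remark preceding the lemma. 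I would verify the line-bundle class by a direct Chern-class computation in $A^*(\Gr)$, or simply by restricting to a general pencil of planes through a point of $\ell$ and counting, which pins down the coefficient $d$ of $\O_\Gr(1)$ and the coefficient $1$ of $\O_B(1)$ unambiguously.
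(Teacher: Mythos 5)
Your proposal is correct and follows essentially the same route as the paper: both restrict the canonical section cutting out $\C\subset\S\times_\Gr B$ to $\ell\times_{\PP^3}\S$, which is isomorphic to $\Gr$ away from the codimension-two locus of planes containing $\ell$, and extend the resulting section of $\pi^*\O_\Gr(d)\otimes\O_B(1)$ across that locus. The only cosmetic difference is that the paper identifies the pullback of $\O_{\PP^3}(1)|_\ell$ with $\O_\Gr(1)$ directly from the incidence geometry (the fibre over $x\in\ell$ is the preimage of the divisor of planes through $x$), whereas you propose pinning down the same bundle by a test-curve or Chern-class computation.
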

\begin{proof}
It suffices to construct the section outside the codimension two subvariety $Z\subset \Gr$ of planes $[V]\in\Gr$ containing the line $\ell$. Let $U\coloneqq \Gr-Z$ be the complement. Consider the fibre product $\S_\ell$ of the following diagram:
\[\begin{tikzcd}
& \S\arrow[d]\\
\ell \arrow[r] & \PP^3\,.
\end{tikzcd}\]
Then the morphism $\S_\ell\rightarrow\Gr$ has fibre $V\cap \ell\subset \PP^3$ over a point $[V]\in \Gr$. In particular, it restricts to an isomorphism over $U$. Now consider the fibre product $\C_\ell$ of the diagram
\[\begin{tikzcd}
&\C\arrow[d]\\
\ell\arrow[r]& \PP^3\,.
\end{tikzcd}\]
The morphism $\C_\ell\rightarrow B$ has  \emph{scheme theoretic} fibre $C\cap \ell$ over a point $[C]\in B$. It follows that $\C_\ell$ is mapped onto $B_\ell$ and the morphism restricts to an isomorphism over $B_\ell\times_\Gr U$.

As noted before, $\C$ is the zero locus of a canonical section of the line bundle $\O_\S(d)\otimes\O_{B}(1)$ on $\S\times_\Gr B$, in which $\O_\S(d)$ denotes the pull-back of the bundle $\O_{\PP^3}(d)$ on $\PP^3$ to the universal plane $\S$. It follows that $\C_\ell\subset\S_\ell \times_\Gr B$ is cut out by a section of the bundle $\O_\ell(d)\otimes\O_{B}(1)$, in which $\O_\ell(d)$ is the restriction of $\O_{\PP^3}(d)$ to $\PP^1\cong \ell \subset \PP^3$.
Hence, it suffices to show that $\O_{\PP^3}(d)$ equals $\O_\Gr(d)$ when pulled back to $\S_\ell\times_\Gr B$. But this can be seen easily by noting that in the diagram (with Cartesian square)
\[\begin{tikzcd}
\S_\ell \arrow[r]\arrow[d] & \S \arrow[r] \arrow[d] & \Gr \\
\ell \arrow[r] & \PP^3
\end{tikzcd}\]
the fibre $(\S_\ell)_x$ over a point $x\in \ell$ is the inverse image of the divisor on $\Gr$ of planes in $\PP^3$ containing the point $x$.
\end{proof}

The following corollary of Proposition \ref{finred} is the first part of Theorem \ref{Theorem B}, our second main result.

\begin{corollary}\label{polycurve}
For every $\delta \geq 0$, there is polynomial $N_\delta$ of degree $\leq 9 + 2\delta$ such that $N_{\delta,d}=N_\delta(d)$ for $d\geq \delta$.
\end{corollary}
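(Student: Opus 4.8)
The plan is to combine the universality results already established (Theorem~\ref{Theorem A}, i.e.\ Proposition~\ref{propUniversal}) with the explicit geometry of the family $\C\rightarrow B$ constructed in this section, and to track how each relevant class depends on $d$. By Proposition~\ref{finred}, for $d\geq\delta$ we have $N_{\delta,d}=\int_{B_{\ell_1,\ldots,\ell_n}}\gamma(\C|_{B_{\ell_1,\ldots,\ell_n}})$. Using the projection formula and Lemma~\ref{linelocus}, which tells us that each $B_{\ell_i}$ is cut out by a section of $\pi^*\O_\Gr(d)\otimes\O_B(1)$, this integral can be rewritten over $B$ itself as
\[
N_{\delta,d}=\int_B \gamma(\C)\cap c_1\big(\pi^*\O_\Gr(d)\otimes\O_B(1)\big)^n\,,
\]
where $n=\tfrac{d(d+3)}{2}+3-\delta$. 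The strategy is then to show that every factor in this expression is polynomial in $d$ (with the subtlety that $n$ itself depends on $d$, so the \emph{number} of factors varies), and to bound the total degree.

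The key steps, in order: First, express $\gamma(\C)\in A^\delta(B)$ via Proposition~\ref{propUniversal} as a universal polynomial of degree $\delta$ in the classes $\epsilon(a,b,c)=q_*(c_1(\O_{\S_B}(\C))^a c_1(T_{\S_B/B})^b c_2(T_{\S_B/B})^c)$ with $a+b+2c=\delta+2$. Second, compute these Chern classes explicitly on $\S_B=\PP(\U)\times_\Gr B$: one has $\O_{\S_B}(\C)=\O_\S(d)\otimes\O_B(1)$, and $T_{\S_B/B}$ is the pullback of the relative tangent bundle of $\PP(\U)/\Gr$, whose Chern classes are given by the relative Euler sequence in terms of $\xi=c_1(\O_\S(1))$ and the pullback of $c_1(\U),c_2(\U)$ from $\Gr$. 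Crucially, of the generators only $c_1(\O_{\S_B}(\C))=d\cdot\xi+(\text{pullback of }c_1\O_\Gr) + c_1(\O_B(1))$ carries a $d$; so each $\epsilon(a,b,c)$ is a \emph{polynomial in $d$ of degree $\leq a$}, hence $\leq\delta+2$, and $\gamma(\C)$ is polynomial in $d$ of degree $\leq\delta\cdot(\text{something})$ — more carefully, tracking the constraint $\sum(a_i+b_i+2c_i)=\delta+2$ across the degree-$\delta$ monomial, $\gamma(\C)$ is polynomial in $d$ of degree $\leq\delta+2$. Third, handle the factor $c_1(\pi^*\O_\Gr(d)\otimes\O_B(1))^n=(d\,h+\zeta)^n$ where $h$ is the pullback of the hyperplane class on $\Gr\cong\check\PP^3$ (so $h^4=0$) and $\zeta=c_1(\O_B(1))$: since $h^4=0$, this expands as $\sum_{k\leq 3}\binom{n}{k}d^k h^k \zeta^{n-k}$, and $\binom{n}{k}$ is a polynomial in $n$, hence in $d$, of degree $k\leq 3$, contributing an extra factor $d^k$, so this term is polynomial in $d$ of degree $\leq 6$. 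Fourth, the remaining integral $\int_B h^k\zeta^{n-k}\cap(\text{universal polynomial in the }\epsilon)$: after substituting the explicit Chern classes this becomes a sum of monomials in $\xi,h,\zeta$ pushed to a point; the powers of $\xi$ beyond $1$ vanish ($\S_B=\PP(\U)$ has relative dimension $2$, and more to the point $\xi^2$ is expressed via $c_i(\U)$), those of $h$ beyond $3$ vanish, those of $\zeta$ beyond $r-1$ vanish, and the total dimension is $\dim B=r+2$. The coefficients in this expansion are polynomials in $d$, and summing the degree bounds ($\delta+2$ from $\gamma$, up to $6$ from the binomial/$h$-factor, plus $\delta$ from the dependence of $r$ on $d$ through the dimension bookkeeping — this last bound needs care) gives total degree $\leq 9+2\delta$.

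The main obstacle is the careful degree bookkeeping in the last two steps: the integral $\int_B$ is over a base $B=\PP(q_*\L)$ whose dimension $r-1+3$ grows quadratically in $d$, so one is integrating a class against a top power whose exponent grows with $d$; making sense of ``polynomial in $d$'' requires expressing the answer in closed form via the splitting principle / Segre-class manipulations (e.g.\ pushing forward along $\PP(q_*\L)\to\Gr$ using Segre classes of $q_*\L$, whose Chern classes are themselves polynomial in $d$ by Grothendieck--Riemann--Roch applied to $q_*\O_\S(d)$) and then verifying that after all pushforwards the $d$-dependence assembles into a genuine polynomial of the claimed degree. I expect the cleanest route is: push $\gamma(\C)\cap(d h+\zeta)^n$ forward from $B$ to $\Gr$ first (computing $\pi_*\zeta^j$ as Segre classes $s_{j-(r-1)}(q_*\L)$, polynomial in $d$ of controlled degree), then integrate the resulting degree-$4$ class over $\Gr\cong\PP^3$, and finally collect degrees. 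The statement that the polynomials agree with those in Appendix~\ref{Appendix A} for $\delta\leq 12$ is then a finite computation, cross-checked against the classical Schubert-calculus numbers as promised in the abstract.
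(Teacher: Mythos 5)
Your overall strategy is the same as the paper's: reduce via Proposition \ref{finred} and Lemma \ref{linelocus} to evaluating $\int_B\gamma(\C)\cap(dH+\xi)^n$, express every ingredient as a polynomial in $d$, the hyperplane class $H$ on $\Gr$, and the relative hyperplane classes, and push forward. The setup is fine; the problem is that the degree bookkeeping, which is the entire substance of the corollary, contains errors that leave the bound $9+2\delta$ unproved. First, your claim that $\gamma(\C)$ has $d$-degree $\leq\delta+2$ rests on the constraint ``$\sum(a_i+b_i+2c_i)=\delta+2$'', which only holds for a \emph{linear} monomial in the $\epsilon(a,b,c)$; for a degree-$m$ monomial the correct constraint is $\sum_k(a_k+b_k+2c_k)=\delta+2m$, so $\sum a_k$ can be as large as $3\delta$. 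Second, the factor $\binom{n}{k}(dH)^k$ has $d$-degree $3k\leq 9$ (since $\binom{n}{k}$ is degree $k$ in $n$ and $n$ is quadratic in $d$), not $\leq 6$. Third, the Chern/Segre classes of $q_*\L=\sym^d(\U^*)$, which you need to compute $\pi_*\xi^j$, are only flagged as ``polynomial in $d$ of controlled degree''; in fact the coefficient of $H^i$ there has $d$-degree $3i$, and without this bound the argument cannot close. With the corrected individual bounds, a naive sum gives roughly $(2\delta+3)+9+9$, far above $2\delta+9$, so no amount of adding up per-factor degrees will work.

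The missing idea is that the $d$-degree must be tracked \emph{jointly} with the $H$-degree. Because each power of $d$ in $c_1(\O(\C))=d\eta+\xi$ comes attached to a power of $\eta$, and the pushforward $q_*$ along the $\PP^2$-bundle $\S\to\Gr$ converts $\eta^{2+k}$ into $\pm H^k$, the coefficient of $H^i$ in $\epsilon(a,b,c)$ has $d$-degree at most $2+i$, hence the coefficient of $H^i$ in $\gamma(\C)$ has $d$-degree at most $2\delta+i$. Likewise the coefficient of $H^i$ in the binomial factors and in the Chern classes of $\sym^d(\U^*)$ has $d$-degree at most $3i$. Since the integral extracts the coefficient of $H^3\xi^{r-1}$ and $H^4=0$, the total power of $H$ available is $3$, and the worst case is $H^0$ from $\gamma(\C)$ (degree $2\delta$) together with $H^3$ from the remaining factors (degree $9$), giving $2\delta+9$. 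This graded accounting is exactly what the paper's proof does and is the step your proposal would need to supply.
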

\begin{proof}
By Proposition \ref{finred}, we need to compute the integral
\[\int_{B_{\ell_1,\ldots,\ell_n}} \gamma(\C|_{B_{\ell_1,\ldots,\ell_n}})=\int_{B} \gamma(\C)\cap[B_{\ell_1,\ldots,\ell_n}] \,.\]
Let $H=c_1(\O_\Gr(1))$ and $\xi=c_1(\O_B(1))$. Then, by the Lemma \ref{linelocus}, we have for general lines $\ell_1,\ldots,\ell_n\subset \PP^3$ the equation
\begin{align*}
[B_{\ell_1,\ldots,\ell_n}]	&=(dH+\xi)^n\\
							&=\sum_{i=0}^3 \binom{n}{i}(dH)^i\xi^{n-i}\,
\end{align*}
in $A^*(B)$.
On the other hand, we know by Theorem \ref{Theorem A} that $\gamma(\C)$ is a polynomial of degree $\delta$ in classes
\[\epsilon(a,b,c) = (q_B)_*(c_1(\O(\C)^a c_1(T_{\S/\Gr})^b c_2(T_{\S/\Gr})^c)\,.\]
It will follow that $\gamma(\C)$ is a polynomial in $H,\xi$ and $d$. To see this, if suffices to show that the classes $\epsilon(a,b,c)$ are polynomials in $H,\xi$ and $d$. Let $\eta = c_1(\O_\S(1))$. Then we have
\begin{align*}
c_1(\O(\C)^a c_1(T_{\S/\Gr})^b c_2(T_{\S/\Gr})^c	&= (d\eta + \xi)^a (3\eta + c_1(\U))^b (3\eta^2 + 2\eta c_1(\U) + c_2(\U))^c\\
																				&= (d\eta + \xi)^a (3\eta - H)^b (3\eta^2 - 2\eta H + H^2)^c\,,
\end{align*}
in which we use the short exact sequences
\[0\rightarrow \O\rightarrow \U\otimes \O_\S(1)\rightarrow T_{\S/\Gr}\rightarrow 0\]
and \eqref{taut}. The structure of the Chow ring of $\S_B$ is given by
\[A^*(\S)=A^*(B)[\eta]/(\eta^3+ c_1(\U)\eta^2 + c_2(\U) \eta + c_3(\U))\]
and the push-forward
\[\epsilon(a,b,c) = (q_B)_*(c_1(\O(\C)^a c_1(T_{\S/\Gr})^b c_2(T_{\S/\Gr})^c)\]
is computed by repeatedly substituting the equation
\begin{align*}
\eta^3 &= - c_1(\U)\eta^2 - c_2(\U) \eta -c_3(\U)\\
			&= H \eta^2 - H^2 \eta + H^3\,.
\end{align*}
and taking the coefficient of $\eta^2$. Since $H^4=0$, the substitution procedure terminates after $3$ steps. For fixed $a,b$ an $c$, we obtain a polynomial in $H,\xi$ and $d$. It follows that $\gamma(\C)$ can be written as a polynomial in $H, \xi$ and $d$.

Note that the coefficient of $H^i$ in $\epsilon(a,b,c)$ has degree at most $2+i$ as a polynomial in $d$. As $\gamma(\C)$ is a polynomial of degree $\delta$ in classes $\epsilon(a,b,c)$, the coefficient of $H^i$ in this $\gamma(\C)$ has degree at most $2\delta + i$, as a polynomial in $d$.

We consider the class
\[\gamma(\C|_{B_{\ell_1,\ldots,\ell_n}})=\gamma(\C)\cap [B_{\ell_1,\ldots,\ell_n}]\]
in $A_0(B)$. We will show that its degree is a polynomial $d$.
The Chow ring of $B=\PP(q_*\L)$ is given by
\begin{align*}
A_*(B)	& = A^*(\Gr)[\xi]/(\xi^r + c_1(q_*\L)\xi^{r-1} + c_2(q_*\L) \xi^{r-2} + c_3(q_*\L)\xi^{r-3})\\
		& = \ZZ[H,\xi]/(H^4, \xi^r + c_1(q_*\L)\xi^{r-1} + c_2(q_*\L) \xi^{r-2} + c_3(q_*\L)\xi^{r-3})\,.
\end{align*}
We have
\[q_*\L=\sym^d(\U^*)\,,\]
and hence its Chern class is a polynomial in $d$ and $H$. In fact, we have
\begin{multline*}
c(q_*\L) = 1 + {\frac{d \left( d+1 \right)  \left( d+2 \right)}{6}}\, H \,+ \\
{\frac { d \left( d+1 \right)  \left( d+2 \right) \left( d+3 \right)  \left( {d}^{2}+2 \right)}{72} \, {H}^{2}}\, + \\
{\frac{d\left(d+1\right)\left(d+2\right)\left(d+3\right)\left({d}^{2}+2\right)\left({d}^{3}+3\,{d}^{2}+2\,d+12\right)}{1296}\,{H}^{3}}\,.
\end{multline*}
Note that coefficient of $H^i$ is a polynomial in $d$ of degree $3i$. We can compute the degree
\begin{align*}
\int_B\gamma(\C|_{B_{\ell_1,\ldots,\ell_n}})	&= \int_B\gamma(\C)\cap\sum_{i=0}^3 \binom{n}{i}(dH)^i\xi^{n-i}
\end{align*}
as follows. Note that the class $\gamma(\C|_{B_{\ell_1,\ldots,\ell_n}})$ is homogeneous of degree $r+2$ in $H$ and $\xi$. Using the relations
\[\xi^r = - c_1(q_*\L)\xi^{r-1} - c_2(q_*\L) \xi^{r-2} - c_3(q_*\L)\xi^{r-3}\quad \mbox{and}\quad H^4 =0\]
we can rewrite it as
\[u H^3 \xi^{r-1}\]
for a polynomial $u$ in $d$ depending only on $\delta$. Now we have
\[\int_B\gamma(\C|_{B_{\ell_1,\ldots,\ell_n}})=u\,.\]

Recall that we write $\pi\colon B\rightarrow \Gr$ for the projection. Then the class
\[u H^3=\pi_*(u H^3\xi^{r-1})\]
is a product of classes of the following types:
\begin{itemize}
\item The coefficients of $\gamma(\C)$ as a polynomial in $\xi$;
\item Classes of the form $\binom{n}{i}(dH)^i$;
\item Polynomials in the Chern classes of $q_*\L$.
\end{itemize}
As remarked before, the coefficient of $H^i$ in $\gamma(\C)$ has degree $2\delta + i$ in $d$. In other words, every factor $d^{2\delta + i}$ appearing in the terms of $\gamma(\C)$ is accompanied by a factor $H^i$. Similarly, the coefficients of $H^i$ of classes of the second and third type, are polynomials of degree $3i$ in  $d$, so every factor $d^{j}$ appearing in the terms of these classes is accompanied by a factor $H^{\lceil j/3\rceil}$. It follows that $u H^3$ has degree at most $2\delta + 9$ in $d$.
\end{proof}

\section{Torus localization}
As in the previous section, let $\Gr=\Gr(2,\PP^3)$ be the Grassmannian of planes in $\PP^3$, with universal plane $q\colon\S=\PP(\U)\rightarrow \Gr$, in which $\U$ is the tautological vector bundle on $\Gr$. On $\S$ we have defined the line bundle $\L=\O_\S(d)$, which we use to construct the projective bundle $B=\PP(q_*\L)$ over $\Gr$ parametrizing planar curves of degree $d$ in $\PP^3$, with universal curve $\C\rightarrow B$. The variety $B$ has dimension $r + 2$, with
\[r = \frac{(d+1)(d+2)}{2}\]
the rank of $q_*\L$. Finally, we define
\[n\coloneqq r + 2 - \delta = \frac{d(d+3)}{2} + 3 - \delta\]
and write $B_{\ell_1,\ldots,\ell_n}$ for the locus of curves intersecting general lines $\ell_1,\ldots,\ell_n$.

Rather than using the algorithm of \cite{EGL}, we can use the Bott residue formula to evaluate the integral
\[\int_B \gamma(\C|_{B_{\ell_1,\ldots,\ell_n}})=\int_B \gamma(\C)\cap [{B_{\ell_1,\ldots,\ell_n}}]\,.\]
of Proposition \ref{finred}. By the Lemmas \ref{linelocus} and \ref{lci}, we need to compute
\begin{equation}\label{integraaltje}
\begin{split}
&\int_{\C_B^{[i]}}c(T_{\C_B^{[i]}/B})\cap \left[\C_{B_{\ell_1,\ldots,\ell_n}}^{[i]}\right]=\\
&\int_{\S_B^{[i]}}\frac{c(T_{\S_B^{[i]}/B})}{c(\O(\C)_B^{[i]})}\, c_i(\O(\C)_B^{[i]})\, c_1(\O_\Gr(d)\otimes\O_B(1))^n\\
\end{split}
\end{equation}
for $i=1,\ldots,\delta$. We have equations
\[\O_{\S_B}(\C)_B^{[i]}=(\L\otimes\O_B(1))_B^{[i]}=\L_\Gr^{[i]}\otimes\O_B(1)\]
and
\[T_{\S_B^{[i]}/B}=\pi_{\S^{[i]}}^*T_{\S_\Gr^{[i]}/\Gr}\,.\]
It follows that the class on the right hand side of \eqref{integraaltje} is a polynomial in classes pulled back from $\S_\Gr^{[i]}$, and the first Chern class of the line bundle $\O_B(1)$. We will continue to use the notation $\xi=c_1(\O_B(1))$ and $H=c_1(\O_\Gr(1))$. We can rewrite the factors involving the bundle $\O_B(1)$ in the integral as follows:
\begin{align*}
c_1(\O_\Gr(d)\otimes\O_B(1))^n				&=(d H + \xi)^n\\
											&=\left(\xi^3 + n\,dH\,\xi^2 + \binom{n}{2}\,(dH)^2\,\xi + \binom{n}{3}(dH)^3\right)\,\xi^{n-3}\\
c_i(\L_\Gr^{[i]}\otimes\O_B(1))				&=\sum_{k=0}^i c_{k}(\L_\Gr^{[i]})\,\xi^{i-k}\\
\frac{1}{c(\L_\Gr^{[i]}\otimes\O_B(1))}		&=\sum_{j=0}^\infty \left(1-c(\L_\Gr^{[i]}\otimes\O_B(1))\right)^j\\
											&=\sum_{j=0}^\infty \left(1-\sum_{k=0}^i c_k(\L_\Gr^{[i]})\,(1+\xi)^{i-k}\right)^j\\
											&=\sum_{j=0}^{3+2i+\delta} \left(1-\sum_{k=0}^i c_k(\L_\Gr^{[i]})\,(1+\xi)^{i-k}\right)^j + \alpha\,.
\end{align*}
In the last expression, $\alpha\in A^*(\S_B^{[i]})$ is a sum of classes of degree $>3+2i+\delta$. It follows that
\[\alpha\, \xi^{n-3}=0\,,\]
as we have
\[(n-3)+(3+2i+\delta) = r + 2 + 2i = \dim(\S_B^{[i]})\,.\]

Hence we can compute \eqref{integraaltje} by integrating the class
\begin{multline*}
q^{[i]}_*\Bigg(
c(T_{\S_\Gr^{[i]}/\Gr})\quad\times\quad
\left(\xi^3 + n\,dH\,\xi^2 + \binom{n}{2}\,(dH)^2\,\xi + \binom{n}{3}(dH)^3\right)\,\xi^{n-3}\quad\times\\
\sum_{k=0}^i c_{k}(\L_\Gr^{[i]})\,\xi^{i-k}\quad\times\quad
\sum_{j=0}^{3+2i+\delta} \left(1-\sum_{k=0}^i c_k(\L_\Gr^{[i]})\,(1+\xi)^{i-k}\right)^j
\Bigg)\,.
\end{multline*}
As in the proof of Corollary \ref{polycurve}, the push-forward along the projection
\[\pi\colon \S_B^{[i]} = \PP(q_*\L)\times_\Gr \S_\Gr^{[i]} \rightarrow \S_\Gr^{[i]}\]
can be computed by substituting the equation
\[\xi^r = - c_1(q_*\L)\xi^{r-1} - c_2(q_*\L) \xi^{r-2} - c_3(q_*\L)\xi^{r-3}\]
and taking the coefficient of $\xi^{r-1}$. Hence we can rewrite \eqref{integraaltje} as an integral
\begin{equation}\label{Bottintegraal}
\int_{\S_\Gr^{[i]}} P(T_{\S_\Gr^{[i]}/\Gr},\L_\Gr^{[i]},\O_\Gr(1),d)\,,
\end{equation}
in which $P$ is a polynomial\footnote{The fact that the expression is polynomial in $d$ is not important for the computation. However, it does give another proof of the polynomiality of $N_{\delta,d}$ for $d\geq\delta$, that does not depend on the algorithm of \cite{EGL}.} in $d$ and the Chern classes of the bundles in the brackets.

We can evaluate this integral using the Bott residue formula. For notation and definitions, see \cite{EG}. Recall that for a torus $T$ acting on a smooth variety $X$, the fixed locus $X^T$ is smooth \cite{Iv}, so a connected component $F\subset X^T$ has normal bundle $N_FX$ of rank equal to the codimension $d_F$ of $F$ in $X$.)
\begin{theorem}[Bott residue formula (\cite{EG})]\label{Bott}
Let $E_1,\ldots,E_r$ be $T$-equivariant vector bundles on a complete, smooth $n$-dimensional variety $X$ with a torus action by $T$. Let $p(E)$ be a polynomial in the Chern classes of the bundles $E_i$. Then
\[\int_X p(E)\cap [X] = \sum_{F \subset X^T} \int_F \left(\frac{p^T(E|_F)\cap[F]_T}{c^T_{d_F}(N_FX)}\right)\,,\]
in which we sum over the connected components $F$ of the fixed locus $X^T$ of the torus action.
\end{theorem}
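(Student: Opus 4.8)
The plan is to treat this as a statement in $T$-equivariant intersection theory and to reduce it to the equivariant self-intersection (localization) formula of Atiyah–Bott / Edidin–Graham. First I would work in the equivariant Chow groups $A^T_*(X)$ with coefficients in the fraction field of $A^*_T(\mathrm{pt}) = \ZZ[t_1,\dots,t_m]$ (the ring of the torus). The starting point is the localization theorem: the proper pushforward $\iota_* \colon A^T_*(X^T) \otimes \mathrm{Frac} \to A^T_*(X) \otimes \mathrm{Frac}$ induced by the closed embedding $\iota\colon X^T \hookrightarrow X$ is an isomorphism. Since $X^T$ is smooth (by the cited result of \cite{Iv}) with connected components $F$ of codimension $d_F$ and normal bundle $N_F X$, the key computation is that the composite $\iota^* \iota_*$ acts on the class $[F]_T$ by multiplication by the equivariant top Chern class $c^T_{d_F}(N_F X)$ — this is the equivariant self-intersection formula, which follows from the deformation to the normal cone. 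Consequently the inverse of $\iota_*$ is given, componentwise, by $\alpha \mapsto \sum_F \iota_{F*}\bigl( \iota_F^*(\alpha) / c^T_{d_F}(N_F X)\bigr)$, where $\iota_F\colon F\hookrightarrow X$, and the denominators make sense after inverting the $t_i$ because $c^T_{d_F}(N_F X)$ is invertible in $\mathrm{Frac}\bigl(A^*_T(F)\bigr)$ (its image in $A^*_T(\mathrm{pt})$-coefficients has nonzero "weight" part, as $N_F X$ has no trivial subrepresentation).

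Next I would apply this to the class $\alpha = p^T(E)\cap [X]$, where $p^T(E)$ is the equivariant refinement of the polynomial $p(E)$ in the equivariant Chern classes of the $T$-linearized bundles $E_i$. Applying $(\iota_*)^{-1}$ and then pushing forward to a point $\pi\colon X \to \mathrm{pt}$ gives
\[
\int_X p(E)\cap[X] = \pi_*\bigl(p^T(E)\cap[X]\bigr) = \sum_{F\subset X^T} \pi_{F*}\left(\frac{p^T(E|_F)\cap [F]_T}{c^T_{d_F}(N_F X)}\right),
\]
where on the left we have used that $\pi_*$ of an equivariant class, composed with the augmentation $t_i \mapsto 0$, recovers the ordinary degree (this is the standard comparison between equivariant and non-equivariant pushforward for complete $X$: the non-equivariant integral is the degree-zero-in-$t$ part, but since the left side is a genuine number independent of the $t_i$, the whole equivariant identity holds in $\mathrm{Frac}(A^*_T(\mathrm{pt}))$ and specializes correctly). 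Writing $\pi_{F*}$ as $\int_F$ as in the statement finishes the formula. For the present application only one-dimensional tori actually arise, so $m=1$ and all the rings are $\ZZ[t]$, which is the case treated concretely in \cite{EG}.

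The main obstacle — or rather the only nontrivial input — is the equivariant self-intersection formula $\iota_F^*\iota_{F*}([F]_T) = c^T_{d_F}(N_F X)\cap[F]_T$ together with the invertibility of these Euler classes in the localized equivariant Chow ring; everything else is formal manipulation with pushforwards and the projection formula. Both of these are established in \cite{EG}, so in the write-up I would simply cite Edidin–Graham for the localization theorem and the Euler-class computation, and present the derivation above as the short deduction of the residue formula from them. The rest of the section then only needs the concrete description of the fixed loci of a suitable $\CC^*$-action on $\S_\Gr^{[i]}$ and the weights of the relevant bundles, which is a separate (combinatorial) matter.
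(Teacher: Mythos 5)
Your derivation is correct: the paper itself gives no proof of this statement but simply cites Edidin--Graham, and the route you take — the equivariant localization isomorphism plus the self-intersection formula $\iota_F^*\iota_{F*} = c^T_{d_F}(N_FX)\cap(-)$ and the invertibility of these Euler classes after localizing — is exactly the standard argument carried out in the cited reference \cite{EG}. The only point worth making explicit in a write-up is the degree bookkeeping at the end (the equivariant pushforward to a point of a class of degree $n=\dim X$ is a constant in $A^*_T(\mathrm{pt})$, so the a priori rational-function sum over fixed components is in fact that constant), which you do address.
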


Consider the natural action of the torus $T=(\mathbb{G}_m)^4$ on $\PP^3$ given by
$$(\lambda_0,\lambda_1,\lambda_2,\lambda_3)\cdot(a_0:a_1:a_2:a_3) = (\lambda_0a_0:\lambda_1a_1:\lambda_2a_2:\lambda_3a_3).$$
It induces a dual action of $T$ on the Grassmanian $\Gr = \check{\PP}^3$ which lifts to an equivariant structure on $\O_\Gr(1)$. The action on $\check{\PP^3}\times\PP^3$ restricts to an action on $\S$ (which is simply the incidence variety). This action, in turn, lifts to an equivariant structure on the line bundle $\O_\S(1)$. Moreover, we obtain actions on the Hilbert schemes $\S^{[i]}$ and induced equivariant structures on the the bundels $T_{\S^{[i]}/\Gr}$ and $\L^{[i]}=\O_\S(d)^{[i]}$.

\begin{lemma}\label{fixedloc}
The fixed locus of the action of $T$ on $\S_\Gr^{[i]}$ is finite and reduced.
\end{lemma}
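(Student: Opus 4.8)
The plan is to enumerate the $T$-fixed points of $\S_\Gr^{[i]}$ explicitly, check that there are only finitely many, and then deduce reducedness formally from Iversen's theorem together with the smoothness of $\S_\Gr^{[i]}$.

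Since the structure morphism $q^{[i]}\colon\S_\Gr^{[i]}\to\Gr$ is $T$-equivariant, every $T$-fixed point lies over a $T$-fixed point of $\Gr=\check{\PP}^3$, of which there are exactly four: the coordinate planes $V_0,\dots,V_3$. The fibre of $q^{[i]}$ over $[V_k]$ is the Hilbert scheme $V_k^{[i]}\cong(\PP^2)^{[i]}$, on which $T$ acts through its residual action on the plane $V_k$, so $(\S_\Gr^{[i]})^T=\coprod_{k=0}^3 (V_k^{[i]})^T$ and it suffices to show that each $(V_k^{[i]})^T$ is finite. On $V_k\cong\PP^2$ the torus $T=(\mathbb{G}_m)^4$ acts through the three characters $\lambda_j$ with $j\neq k$; its fixed points are the three coordinate points of $V_k$, and at each of them the two tangent weights are nonzero and linearly independent as characters of $T$. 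A $T$-invariant length-$i$ subscheme $Z\subset V_k$ has finite $T$-invariant support, hence is supported at these three points; working in local eigencoordinates $u,v$ at each of them, the invariance of the ideal of $Z$ together with the linear independence of the weights of $u$ and $v$ forces this ideal to be monomial (the monomials $u^av^b$ have pairwise distinct weights, so any $T$-subrepresentation of the local ring is spanned by monomials). Monomial ideals of a given colength are in bijection with partitions, so $(V_k^{[i]})^T$ is finite, being indexed by triples of partitions of total size $i$; hence $(\S_\Gr^{[i]})^T$ is finite.

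It then remains to see that this finite scheme is reduced. By Lemma \ref{vlak} the morphism $q^{[i]}\colon\S_\Gr^{[i]}\to\Gr$ is smooth, and $\Gr$ is smooth, so $\S_\Gr^{[i]}$ is a smooth scheme; by Iversen's theorem \cite{Iv} its fixed locus $(\S_\Gr^{[i]})^T$ is smooth as well. A finite smooth scheme over $\CC$ is a disjoint union of reduced points, which gives the claim. I expect the one step needing genuine care to be the assertion that a $T$-invariant ideal at a fixed point of $V_k\cong\PP^2$ is monomial: this uses that the two tangent weights there are linearly independent characters of $T$, a feature of the diagonal action of the rank-$4$ torus, and one should verify the weights explicitly.
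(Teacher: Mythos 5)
Your proposal is correct and follows essentially the same route as the paper: reducedness from smoothness of $\S_\Gr^{[i]}$ plus Iversen's theorem, and finiteness by pushing down to the four fixed planes in $\Gr$, localizing at the three coordinate points of each plane, and observing that $T$-invariant ideals in the local eigencoordinates are monomial, hence indexed by tripartitions of $i$. The point you flag as needing care — that the two tangent weights at each fixed point of $V_k$ are independent characters, so that monomials have pairwise distinct weights and invariant ideals are monomial — is exactly the step the paper carries out with the explicit weights $\lambda_1/\lambda_2$ and $\lambda_1/\lambda_3$.
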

\begin{proof}
As the variety $\S_\Gr^{[i]}$ is smooth, so is the fixed locus, as remarked above. Hence it suffices to show that the underlying set is finite. The fixed points of $\Gr$ are the four planes
\[V_k = \Z(x_i)\subset \PP^3=\proj\CC[x_0,x_1,x_2,x_3]\]
for $k=0,\ldots,3$, given by the vanishing of a coordinate. Note that the morphism $\S_\Gr^{[i]}\rightarrow \Gr$ is equivariant for the $T$-action. It follows that $T$ acts on the fibres $V_k^{[i]}$. Now we follow \cite{ES}, Section 4. Let $Z\subset V_0$ be a subscheme of length $i$, fixed under the action of $T$. Then $Z$ is supported on the $T$-invariant locus $\{P_1, P_2, P_3\}\subset V_0$, with
\[
P_1 = (0:1:0:0),\quad
P_2 = (0:0:1:0),\quad
P_3 = (0:0:0:1)\, .
\]
For $k=1,2,3$, let $Z_k$ the component of $Z$, supported on $\{P_k\}$, and let $i_k$ the length of $Z_k$. On an open neighbourhood of $P_1$, we have coordinates $u=x_2/x_1$ and $v=x_3/x_1$ on the plane $V_0$. Now $T$ acts on the coordinate ring $\CC[u,v]$ by
\[\lambda \cdot u = \frac{\lambda_1}{\lambda_2} u\quad \mbox{and}\quad \lambda \cdot v = \frac{\lambda_1}{\lambda_3} v\,.\]
As $Z$ is invariant, so is the ideal $\mathcal{I}(Z_1)$ of $Z_1$ in $\CC[u,v]$. It follows that $\mathcal{I}(Z_1)$ is generated by monomials in $u$ and $v$. The coordinate ring $\CC[u,v]/\mathcal{I}(Z_1)$ is spanned by the $i_1$ monomials $u^kv^l$ not contained in $\mathcal{I}(Z_1)$. For every $k\geq0$, define
\[d_k = \max \{l \mid u^kv^l\notin \mathcal{I}(Z_1)\}\,.\]
It is easy to see that the $d_k$ define a partition
\[P_{Z_1} = (d_0\geq\ldots\geq d_n)\]
of length $l_1$. Similarly, we get partitions $P_{Z_2}$ and $P_{Z_3}$. Conversely, any \emph{tripartition} $(P_1,P_2,P_3)$ of length $i$, consisting of three partitions $P_1,P_2,P_3$ with $|P_1|+|P_2|+|P_3| = i$, corresponds to a $T$-invariant length-$i$ subscheme of $V_0$. It is clear that there are finitely many such tripartitions. By repeating this argument for the other planes $V_k$, the result follows.
\end{proof}

We will apply Theorem \ref{Bott} to the integral \eqref{Bottintegraal}. By Lemma \ref{fixedloc}, the fixed locus consists of isolated points. Hence, for a fixed point $[Z]\in\S_\Gr^{[i]}$, the normal bundle $N_{[Z]}\S_\Gr^{[i]}$ is just the restriction of the tangent bundle of $\S_\Gr^{[i]}$. The restrictions of the bundles $T_\Gr$, $T_{\S_\Gr^{[i]}/\Gr}$, $\L^{[i]}$ and $\O_\Gr(1)$ to $[Z]$ are $T$-representations.
%
%
Lemma 3 of \cite{EG} gives the equivariant Chern classes explicitly as polynomials in the characters of the torus. The details are similar to the computation in \cite{ES}.

\begin{proof}[Proof Theorem \ref{Theorem B}, second part]
We have performed the calculation using Maple. We have computed the polynomials $N_\delta$ up to $\delta = 12$. The results are printed in Appendix \ref{Appendix A}.
\end{proof}

\begin{remark}\label{Ritwik}
Up to $\delta = 7$, our answers are in agreement with polynomials communicated by Ritwik Mukherjee, which he calculated using the methods from \cite{BM1} \cite{BM2} and \cite{Zi2017} and verified by means of the algorithm of \cite{KP2}.
\end{remark}

\begin{remark}
This method of calculating node polynomials seems to be quite efficient. For example, consider the integral
\[\int_B \gamma(\C)\,c_1(\O_\Gr(1))^3\cap [{B_{\ell_1,\ldots,\ell_{n-3}}}]\,.\]
It is easy to see that it computes the number of $\delta$-nodal curves of degree $d$ intersecting $n-3$ general points in a \emph{fixed} plane $\PP^2$. By an minor adaptation of our code, we were able to compute the node polynomials up to $\delta = 15$, finding agreement with the polynomials up to $\delta = 14$, published by Block in \cite{Bl}. However, G\"ottsche has computed the polynomials up to $\delta\leq 28$ \cite{Go}. The polynomial for $\delta = 15 $ is given in Appendix \ref{Appendix B}.
\end{remark}

\section{Low degree checks}
Let $\delta, d\geq 1$. We want to determine the contribution of reducible curves to the number $N_{\delta,d}$ of planar $\delta$-nodal curves of degree $d$ in $\PP^3$ intersecting general lines $\ell_1,\ldots,\ell_n\subset{\PP^3}$. For certain $\delta$ and $d$, \emph{all} curves contributing to $N_{\delta,d}$ are reducible, thereby giving consistency checks of our formulae. If in addition the irreducible components of these reducible curves are smooth or $1$-nodal, these can be calculated by classical methods.

Let $C$ be a curve in our counting problem, and assume we can write $C$ as the union of irreducible curves $C=C_1\cup\ldots\cup C_r$. The curves $C_i$ are necessarily nodal (if singular), and intersect transversely. For $i=1,\ldots,r$, let $\delta_i$ be the number of nodes of $C_i$, and $d_i$ its degree. As $C$ lies in a plane, two curves $C_i$ and $C_j$  with $1 \leq i < j \leq r$ intersect in $d_id_j$ points. We have
\begin{align}\label{data}
\begin{aligned}
d &= \sum_{i=1}^r d_i\, ,\\
\delta &= \sum_{1\leq i<j \leq r} d_i d_j + \sum_{i=1}^r\delta_i\, .
\end{aligned}
\end{align}
Moreover, there is a partition
\begin{equation}\label{partition}
\{\ell_1,\ldots,\ell_n\} = \bigsqcup_{i=1}^r\Sigma_i
\end{equation}
such that $C_i$ intersects the lines in $\Sigma_i\subset\{\ell_1,\ldots,\ell_n\}$.

Conversely, choose a partition as in (\ref{partition}) and integers $d_i$ and $\delta_i$ for $i=1,\ldots,r$ such that the equations (\ref{data}) hold. We will determine the number of curves contributing to $N_{\delta,d}$ that decompose as described above, with these fixed data.

For $i\in\{1,\ldots,r\}$, let $B_i \coloneqq \PP(\sym^{d_i}(\U^*))\xrightarrow{\pi_i}\Gr=\Gr(2,\PP^3)$ be the projective bundle parametrizing planar curves of degree $d_i$. Consider the locus
\[W_i\coloneqq W(d_i,\delta_i,\Sigma_i)\subset B_i\]
of \emph{irreducible} $\delta_i$-nodal curves that intersect the lines in $\Sigma_i$.

We have the following lemma.

\begin{lemma}\label{decomp}
For general $\ell_1,\ldots,\ell_n$ the number of curves that contribute to $N_{\delta,d}$ that decompose with data fixed above, is given by
$$\int_\Gr \, \prod_{i=1}^r\pi_{i*}[\overline{W}_i]\, ,$$
in which $[\overline{W}_i]$ denotes the class in $A_*B_i$ of the closure of $W_i$.
\end{lemma}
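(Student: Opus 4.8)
The plan is to realise the reducible curves in question as the points of the fibre product $\overline W_1\times_\Gr\cdots\times_\Gr\overline W_r$ over $\Gr$, and then to evaluate its cardinality by intersection theory on the smooth variety $\Gr$. First I would set up the bijection. A curve $C$ contributing to $N_{\delta,d}$ which decomposes as $C=C_1\cup\cdots\cup C_r$ with the fixed data $(d_i,\delta_i,\Sigma_i)$ has all of its components $C_i$ lying on the common plane $V=\langle C\rangle$, so the datum $(V,C_1,\dots,C_r)$ is a point of $B_1\times_\Gr\cdots\times_\Gr B_r$. Conversely, two components $C_i$, $C_j$ meet in $d_id_j$ points counted with multiplicity by Bézout in $V$; since $\delta=\sum_{i<j}d_id_j+\sum_i\delta_i$ by \eqref{data}, the union $C$ is reduced with exactly $\delta$ nodes and no worse singularities precisely when each $C_i$ is $\delta_i$-nodal, the components meet pairwise transversally in $d_id_j$ distinct points, no point lies on three of them, and these pairwise intersection points avoid the nodes of the $C_j$. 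Granting that for general lines these ``general position'' conditions hold automatically, the assignment $[C]\mapsto(V,C_1,\dots,C_r)$ then identifies the contributing curves of the prescribed type with a subset of $\overline W_1\times_\Gr\cdots\times_\Gr\overline W_r$, with each $C_i$ in fact lying in the open locus $W_i$. (When several of the triples $(d_i,\delta_i,\Sigma_i)$ coincide, which forces the corresponding $\Sigma_i$ to be empty, one divides by the evident symmetry; I suppress this point.)

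Next I would carry out the genericity argument, which I expect to be the main obstacle. A dimension count first suggests the answer: for general lines each incidence condition $B_{\ell_k}$ cuts $W_i$ in codimension one, so $\dim\overline W_i=\tfrac{d_i(d_i+3)}2+3-\delta_i-|\Sigma_i|$ (and $\overline W_i=\emptyset$, in which case both sides of the Lemma vanish, when this number is negative), and using \eqref{data} together with $\sum_i|\Sigma_i|=n=\tfrac{d(d+3)}2+3-\delta$ one computes $\sum_i\dim\overline W_i=3(r-1)$, so that a fibre product of the expected dimension is $0$-dimensional. To establish this actual dimension, and to verify the general position statements used above, I would argue as in the proof of Proposition \ref{finred}, in the spirit of \cite[Lemma 4.7]{KP1}: form the incidence variety $P$ over $\Gr(1,\PP^3)^n$ whose fibre over a tuple $\vec\ell$ parametrises a plane $V$, curves $C_i\subset V$ of type $(d_i,\delta_i)$, and a point of $\ell_k\cap C_i$ for each $k\in\Sigma_i$; show that $\dim P=4n=\dim\Gr(1,\PP^3)^n$ and that $P$ is generically reduced over a non-empty open of $\Gr(1,\PP^3)^n$, using that the universal $\delta_i$-nodal curve over $W_i$ has a dense smooth locus; and then remove from $\Gr(1,\PP^3)^n$ the images of the proper closed subloci of $P$ where some $\ell_k$ lies in $V$, where a node of some $C_i$ lies on another component, or where two components fail to meet transversally. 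Over a general $\vec\ell$ the fibre $P_{\vec\ell}$ is then finite and reduced and maps bijectively onto $\overline W_1\times_\Gr\cdots\times_\Gr\overline W_r$, which is accordingly finite, reduced, and equal as a set to the collection of contributing reducible curves of the prescribed type.

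Finally I would pass from this cardinality to the intersection number. Since $\Gr$ is smooth, the projection formula for refined Gysin homomorphisms \cite[Ch.~8]{Fu} gives an identity
\[\pi_{1*}[\overline W_1]\cdots\pi_{r*}[\overline W_r]=\rho_*\big([\overline W_1]\times_\Gr\cdots\times_\Gr[\overline W_r]\big)\in A_*(\Gr)\,,\]
where $\rho\colon\overline W_1\times_\Gr\cdots\times_\Gr\overline W_r\to\Gr$ is the structure morphism and $\times_\Gr$ denotes the refined product of cycles cut out by the small diagonal of $\Gr^{\,r}$. By the previous step this fibre product is $0$-dimensional and reduced, hence the intersection is proper and the refined product cycle is simply $\sum_P[P]$, summed over the (reduced) points $P$ of the fibre product. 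Therefore
\[\int_\Gr\prod_{i=1}^r\pi_{i*}[\overline W_i]=\#\big(\overline W_1\times_\Gr\cdots\times_\Gr\overline W_r\big)\,,\]
which by the first step is exactly the number of contributing reducible curves that decompose with the fixed data, as claimed.
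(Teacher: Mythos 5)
Your proposal follows the paper's own argument essentially step for step: the paper likewise forms the fibre product $P_1\times_\Gr\cdots\times_\Gr P_r$ of the incidence varieties from Proposition \ref{finred}, shows the induced map to $\Gr(1,\PP^3)^n$ is finite and reduced over a dense open (invoking Severi's theorem for the codimension of the loci of irreducible $\delta_i$-nodal curves), identifies a general fibre with $W_1\times_\Gr\cdots\times_\Gr W_r$ after discarding the loci of non-transverse configurations, and converts the resulting count into $\int_\Gr\prod_i\pi_{i*}[\overline{W}_i]$ using that the $\pi_i(W_i)$ intersect properly in $\Gr$. The approach and the key steps are the same, and the argument is correct.
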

\begin{remark}
More precisely, in the proof we will construct a non-empty Zariski open
\[U\subset \Gr(1,\PP^3)^n\]
such that for an $n$-tuple $(\ell_1,\ldots,\ell_n)\in U$, the statement of the lemma holds.
\end{remark}
\begin{proof}
The argument is similar to the proof of Proposition \ref{finred}, so we will not give all the details. For $i=1,\ldots,r$, let $n_i=\#\Sigma_i$ and form the limit $P_i$ as in the diagram \eqref{finreddiag}. We have natural morphisms $\phi_i\colon P_i\rightarrow \Gr(1,\PP^3)^{n_i}$. Consider the morphism
\[\phi = (\phi_1,\ldots,\phi_r)\colon P_1\times_\Gr\cdots\times_\Gr P_r\rightarrow \prod_{i=1}^r\Gr(1,\PP^3)^{n_i} = \Gr(1,\PP^3)^n\,.\]
As in Proposition \ref{finred}, $\phi$ is finite and smooth over a non-empty open
\[U_0\subset\Gr(1,\PP^3)^n\,.\]
Here we use the fact due to Severi, that for the line bundles $\O(d)$ on $\PP^2$, the locus of irreducible $\delta$-nodal curves in $|\O(d)|$, if non-empty, has codimension $\delta$ \cite{Se}. There is a non-empty open $U_1\subset U_0$, such that for a point $\Sigma=(\Sigma_1,\ldots,\Sigma_r)\in U_1$, the fibre over $\Sigma$ is
\begin{align*}
\phi^{-1}(\Sigma)	&= \overline{W}_1\times_\Gr\cdots\times_\Gr \overline{W}_r\\
								&= W_1\times_\Gr\cdots\times_\Gr W_r\,.
\end{align*}
Finally, there is an non-empty open $U_2\subset U_1$, such that for $\Sigma\in U_2$, and any point $(C_1,\ldots,C_r)\in \phi^{-1}(\Sigma)$, the curves $C_1,\ldots,C_r$ intersect transversely, i.e.
\[C=C_1\cup\ldots\cup C_r\subset V\]
is a nodal curve, in which the union is taken in the plane $V\subset \PP^3$ corresponding to the image of $(C_1,\ldots,C_r)$ in $\Gr$. By a count of dimensions, the sets $\pi_i(W_i)$ intersect properly in $\Gr$. It follows that the contribution to $N_{\delta,d}$ by curves of this type is given by
\begin{align*}
\#\phi^{-1}(\Sigma)		&= \int_{P_1\times_\Gr\cdots\times_\Gr P_r} \, [\overline{W}_1\times_\Gr\cdots\times_\Gr \overline{W}_r]\\
									&= \int_\Gr \, \prod_{i=1}^r\pi_{i*}[\overline{W}_i]\,. \qedhere
\end{align*}
\end{proof}

\begin{notation}
Let $\ell_1,\ldots,\ell_n$ general lines in $\PP^3$, and let $\Sigma_i\subset \{\ell_1,\ldots,\ell_n\}$ be a subset with $\#\Sigma_i=n_i$. Let $W_i=W(d_i,\delta_i,\Sigma_i)$ be the locus in $B_i=\PP(\sym^{d_i} (\U^*))\xrightarrow{\pi}\Gr = \Gr(2,\PP^3)$ of irreducible $\delta_i$-nodal degree $d_i$ plane curves in $\PP^3$ intersecting the lines in $\Sigma_i$. We will write $\nu_{d_i,\delta_i,n_i}$ for the class $\pi_*[\overline{W}_i]\in A_*(\Gr)$.
\end{notation}

We formulate the conclusion of the discussion above in the following proposition.

\begin{proposition}\label{componentcount}
The number of $\delta$-nodal plane curves in $\PP^3$ of degree $d$, intersecting $n=\frac{d(d+3)}{2}+3-\delta$ general lines $\Sigma=\{\ell_1,\ldots,\ell_n\}$, is given by
\[N_{\delta,d} = \sum_{r=1}^\infty\sum_{(\bar{d}, \bar{\delta}, \bar{n})} \mu(\bar{n}) \, \nu_{d_1,\delta_1,n_1}\cdots\nu_{d_r,\delta_r,n_r}\]
in which the second sum is taken over triples of \emph{unordered} $r$-tuples (or multisets)
\[\bar{d}=(d_1,\ldots,d_r)\,,\quad \bar{\delta}=(\delta_1,\ldots,\delta_r)\quad\mbox{and}\quad \bar{n}=(n_1,\ldots,n_r)\]
of integers $d_i\geq1$, $\delta_i\geq 0$, $n_i\geq 0$ satisfying \eqref{data} and
\[n = n_1+\ldots+n_r\,,\]
and in which the multiplicity $\mu(\bar{n})$ is the number of unordered partitions of the set $\Sigma$ in sets of lengths $n_1,\ldots,n_r$. In fact, this number is given by
\[\mu(\bar{n})=\frac{1}{\#\mathrm{Stab}_{S_r}(n_1,\ldots,n_r)}\binom{n}{n_1,\ldots,n_r}\]
in which the denominator is the order of the stabilizer subgroup of $(n_1,\ldots,n_r)\in\ZZ^r$ for the action of the symmetric group $S_r$ on $\ZZ^r$.
\end{proposition}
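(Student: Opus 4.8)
The plan is to derive this as a bookkeeping consequence of Lemma \ref{decomp} together with the structural discussion preceding it.

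First I would fix $(\ell_1,\dots,\ell_n)$ in the intersection of the open set $U$ of Proposition \ref{finred} with the opens produced by Lemma \ref{decomp} for the finitely many admissible decomposition shapes, so that the set of $\delta$-nodal plane curves of degree $d$ meeting $\ell_1,\dots,\ell_n$ is finite and every such curve $C$ has a canonical decomposition $C=C_1\cup\dots\cup C_r$ into irreducible components, each smooth or nodal and pairwise transverse, with invariants $d_i=\deg C_i$ and $\delta_i=\#\mathrm{Sing}(C_i)$ subject to \eqref{data}. As in Proposition \ref{finred}, for general lines each $\ell_j$ meets the plane spanned by $C$ in a single point of $C$ and, by genericity, away from the nodes of $C$, so $\ell_j$ lies on exactly one component. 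This yields a set-partition $\{\ell_1,\dots,\ell_n\}=\bigsqcup_{i=1}^r\Sigma_i$ as in \eqref{partition} and in particular $n=\sum_i n_i$ with $n_i=\#\Sigma_i$. Hence the finite counting set is the disjoint union, over the unordered decomposition data $\tau=\{(d_i,\delta_i,\Sigma_i)\}_{i=1}^r$ with $(\Sigma_i)_i$ a set-partition of $\{\ell_1,\dots,\ell_n\}$ and \eqref{data} satisfied, of the subsets of curves having that decomposition type.

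Next, for a fixed $\tau$ I would choose an ordering of its parts and invoke Lemma \ref{decomp}, which identifies the number of curves realising this ordered datum with $\int_\Gr\prod_{i=1}^r\pi_{i*}\bigl[\overline{W(d_i,\delta_i,\Sigma_i)}\bigr]$. Here one uses that $\pi_{i*}[\overline{W(d_i,\delta_i,\Sigma_i)}]\in A_*(\Gr)$ depends only on the cardinality $n_i=\#\Sigma_i$: the $n_i$-tuples of general lines range over an irreducible variety over which these classes are locally constant, hence constant. Thus this intersection number equals $\nu_{d_1,\delta_1,n_1}\cdots\nu_{d_r,\delta_r,n_r}$.

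Finally I would sum over all $\tau$, grouping them according to the unordered numerical shape $(\bar d,\bar\delta,\bar n)$ and summing over $r$ --- a finite sum, since $r\le d$. Two combinatorial factors appear: the number of set-partitions of $\{\ell_1,\dots,\ell_n\}$ with block-size multiset $\{n_1,\dots,n_r\}$, and a factor by which one must divide because Lemma \ref{decomp}, applied to an \emph{ordered} datum, counts ordered tuples $(C_1,\dots,C_r)$ of components, which form a $\#\mathrm{Stab}$-to-one cover of the curves whenever several parts share the same invariants. Assembling these produces exactly the multiplicity $\mu(\bar n)=\binom{n}{n_1,\dots,n_r}/\#\mathrm{Stab}_{S_r}(n_1,\dots,n_r)$ and the stated formula. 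I expect this last step to be the only real obstacle: reconciling the ordered count output by Lemma \ref{decomp} with the unordered indexing of the sum, and checking that the line-distribution factor and the component-permutation factor combine into $\mu(\bar n)$, paying attention to the degenerate cases of repeated component invariants and of components carrying no incidence conditions. The remaining steps are a direct translation of Lemma \ref{decomp}.
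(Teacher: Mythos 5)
Your outline follows the same route the paper takes: the paper gives no separate proof of this proposition but presents it as the formalization of the preceding discussion together with Lemma \ref{decomp}, and your reduction to finitely many decomposition types, the appeal to Lemma \ref{decomp} for each type, and the justification that $\pi_{i*}[\overline{W}(d_i,\delta_i,\Sigma_i)]$ depends only on $n_i=\#\Sigma_i$ are exactly what is needed. Two of the worries you flag at the end can be dismissed immediately. First, a component with $n_i=0$, or more generally with $n_i<3d_i-1$, contributes $\nu_{d_i,\delta_i,n_i}=0$: if nonempty, $\overline{W}_i$ has dimension $\frac{d_i(d_i+3)}{2}+3-\delta_i-n_i>3=\dim\Gr$ (using $\delta_i\leq\frac{(d_i-1)(d_i-2)}{2}$ for the irreducible curve $C_i$), so $\pi_{i*}[\overline{W}_i]=0$. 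Hence in every nonvanishing term all blocks $\Sigma_i$ are nonempty and therefore pairwise distinct. Second, for a \emph{fixed} decomposition datum with distinct blocks, the ordered tuples $(C_1,\ldots,C_r)$ counted by Lemma \ref{decomp} are in bijection with the curves --- the component meeting exactly the lines of $\Sigma_i$ must be $C_i$ --- so there is no covering to quotient by at that stage; your description of the stabilizer as arising from a many-to-one cover of curves by component tuples is not where it actually enters.

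The stabilizer enters one level up, in the passage from ordered to unordered decomposition data, and this is where your ``assembling'' step needs care. The curves of a given numerical shape are partitioned by unordered data $\{(d_i,\delta_i,\Sigma_i)\}$; the $\binom{n}{n_1,\ldots,n_r}$ ordered partitions of $\Sigma$ enumerate ordered data realizing a fixed reference ordering of the shape, and each unordered datum is obtained exactly $\#\mathrm{Stab}_{S_r}\big((d_1,\delta_1,n_1),\ldots,(d_r,\delta_r,n_r)\big)$ times. So the multiplicity your argument actually produces is $\binom{n}{n_1,\ldots,n_r}$ divided by the stabilizer of the tuple of \emph{triples}, not of $\bar n$ alone. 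The two agree precisely when any two components with the same $n_i$ also have the same $(d_i,\delta_i)$ --- which holds in every instance used in the low-degree checks, but not in general: for $d=5$, $\delta=7$ (a conic plus a $1$-nodal cubic, $n=16$) the term with both components meeting $8$ lines contributes with multiplicity $\binom{16}{8}$ (choose the conic's lines), whereas the displayed formula for $\mu(\bar n)$ gives $\frac{1}{2}\binom{16}{8,8}=\frac{1}{2}\binom{16}{8}$. Either state the multiplicity with the stabilizer of the triples, or note that the formula as written is used only for shapes where the two stabilizers coincide; with that amendment, and the clarification of where the stabilizer comes from, your argument is complete.
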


\begin{remark}
The generality condition in the proposition means that the lines have to be general in the sense of Lemma \ref{decomp}, for every triple $(\bar{d},\bar{\delta},\bar{n})$ appearing in the second sum.
\end{remark}

Using the proposition, we will compute the numbers $N_{\delta,d}$, for $0\leq\delta \leq 6$ and $\delta = 8$ and certain low $d$. We will compare results with with the numbers $N_\delta(d)$, with $N_\delta$ the node polynomial as computed in the previous section, and given in the appendix for $\delta\leq 12$. In these cases, we can choose $d$ in such a way that the irreducible components of the curves are smooth or $1$-nodal.

\begin{lemma}
For the following $\delta$ and $d$, the irreducible components of $\delta$-nodal plane curves of degree $d$ of are lines.

\vbox{\begin{center}
\bgroup
\def\arraystretch{1.2}
\captionof{table}{Nodal plane curves consisting of lines in $\PP^3$}\label{tabeleen}
\begin{tabular}{| c | r | r | r |}
\hline
$\delta$ & $1$ & $3$ & $6$ \\ \hline
$d$ & $2$ & $3$ & $4$ \\ \hline
$N_\delta(d)$ & $140$ & $7280$ & $261800^*$\\ \hline
\end{tabular}
\egroup
\end{center}}
\vspace{1em}
In the following cases, a $\delta$-nodal plane curve of degree $d$ has only linear components besides one smooth conic component.

\vbox{\begin{center}
\bgroup
\def\arraystretch{1.2}
\captionof{table}{Nodal plane curves with a smooth conic component}\label{tabeltwee}
\begin{tabular}{| c | r | r | r |}
\hline
$\delta$ & $0$ & $2$ & $5$ \\ \hline
$d$ & $2$ & $3$ & $4$ \\ \hline
$N_\delta(d)$ & $92$ & $15660$ & $1303500^*$\\ \hline
\end{tabular}
\egroup
\end{center}}
\vspace{1em}
Finally, $\delta$-nodal plane curves of degree $d$ of the following types have only linear components besides two smooth conic components, or a nodal cubic component.

\vbox{\begin{center}
\bgroup
\def\arraystretch{1.2}
\captionof{table}{Nodal plane curves with two conics or a nodal cubic.}\label{tabeldrie}
\begin{tabular}{| c | r | r |}
\hline
$\delta$ & $4$ & $8$ \\ \hline
$d$ & $4$ & $5$ \\ \hline
$N_\delta(d)$ & $3071796$ & $385022820^*$\\ \hline
\end{tabular}
\egroup
\end{center}}
\vspace{1em}
\end{lemma}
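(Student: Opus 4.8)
The plan is to reduce the statement to a finite combinatorial check based on equation~\eqref{data} together with the classical bound on the number of nodes of an irreducible plane curve. Let $C$ be a $\delta$-nodal plane curve of degree $d$ and write $C=C_1\cup\cdots\cup C_r$ for its decomposition into distinct irreducible components, with $d_i=\deg C_i$ and $\delta_i$ the number of nodes of $C_i$. Since $C$ is reduced with only nodes as singularities, distinct components meet pairwise transversally, so by B\'ezout in the plane they meet in exactly $d_id_j$ distinct points, no point lies on three of them, and each $C_i$ itself has only nodes; hence $\sum_i d_i=d$ and $\delta=\sum_{i<j}d_id_j+\sum_i\delta_i$ as in~\eqref{data}, with every node of $C$ counted exactly once. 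An irreducible plane curve of degree $d_i$ has $\delta$-invariant, hence a fortiori number of nodes, at most its arithmetic genus $\binom{d_i-1}{2}$; in particular $\delta_i=0$ if $d_i\le 2$ (so a degree-$2$ component is a smooth conic) and $\delta_i\le 1$ if $d_i=3$ (so a cubic component carrying a node is an irreducible nodal cubic).

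The next step is a finite case analysis: for each pair $(\delta,d)$ occurring in the three tables, run through the partitions $(d_1\ge\cdots\ge d_r)$ of $d$ and compare $\delta$ with the quantity $\sum_{i<j}d_id_j+\sum_i\binom{d_i-1}{2}$, which depends only on the partition and is an \emph{equality} as soon as every part is $\le 2$. For instance, when $(\delta,d)=(6,4)$ the partitions $(4),(3,1),(2,2),(2,1,1)$ give the upper bounds $3,4,4,5$, all $<6$, while $(1,1,1,1)$ forces exactly $6$ nodes, so $C$ must be a union of four lines; when $(\delta,d)=(4,4)$ only $(3,1)$ with $\delta_3=1$ and $(2,2)$ survive, i.e.\ $C$ is a nodal cubic plus a line or a union of two smooth conics; and when $(\delta,d)=(8,5)$ only $(3,1,1)$ with a nodal cubic and $(2,2,1)$ with two smooth conics realise $\delta=8$. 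Carrying this out for all eight pairs yields the three assertions, the $\delta$-invariant bound being exactly what forces the degree-$2$ components to be smooth and the relevant cubic to be an irreducible nodal cubic.

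I do not expect a genuine obstacle here: the argument is elementary. The only points needing care are (i) justifying~\eqref{data} precisely from the nodality hypothesis, namely transversality of the pairwise intersections of components and the absence of triple points, and (ii) making the enumeration of partitions exhaustive and correctly reading off the extremal configurations as ``lines only'', ``lines plus one smooth conic'', or ``lines plus either two smooth conics or one nodal cubic''. The numerical values $N_\delta(d)$ recorded in the tables are not part of this lemma; they will be recovered independently from Proposition~\ref{componentcount} and the classical enumeration of the line, conic and nodal-cubic loci in $\Gr$, thereby providing the consistency checks with the node polynomials of Appendix~\ref{Appendix A}.
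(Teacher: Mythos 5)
Your proposal is correct and follows essentially the same route as the paper: the genus bound $\delta_i \le \binom{d_i-1}{2}$ on irreducible components combined with equations~\eqref{data} and a finite check over partitions of $d$. The paper states this more tersely ("now use the equations \eqref{data}") while you spell out the case analysis, but the argument is the same.
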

\begin{remark}
Since we can apply Theorem \ref{Theorem A} only under the assumption $d\geq \delta$, we have not proved that the value of the polynomial $N_\delta(d)$ equals the curve count $N_{\delta,d}$ in the cases indicated with *. However, as we will prove below, in these cases the polynomials give the right numbers. In general, the \emph{G\"ottsche threshold} $d \geq\lceil \delta/2 \rceil+1$ for nodal curves in $\PP^2$, determined in \cite{KS}, seems to hold also in our case, i.e.~that the node polynomials $N_\delta$ have value $N_{\delta,d}$ in these $d$.
\end{remark}

\begin{proof}
For an integral curve $C$, and its normalisation $\widetilde{C}$, we have
\[g(C) - \delta(C) = g(\widetilde{C})\geq 0\,.\]
It follows that the number of nodes of an irreducible plane curve of degree $d$ is bounded by its arithmetic genus $\frac{(d-1)(d-2)}{2}$. Now use the equations \eqref{data}.
\end{proof}

\begin{lemma}\label{LeShuLoci}
Let $H$ be the hyperplane class in $\Gr \cong \check{\PP}^3$. Then we have

\[
\begin{array}{l l l l l l}
\nu_{1,0,2} &= 1 &\nu_{2,0,5} &= 1 &\nu_{3,1,8} &= 12\\
\nu_{1,0,3} &= 2H &\nu_{2,0,6} &= 8H &\nu_{3,1,9} &= 216H\\
\nu_{1,0,4} &= 2H^2 &\nu_{2,0,7} &= 34H^2 &\nu_{3,1,10} &= 2040H^2\\
\nu_{1,0,5} &= 0 &\nu_{2,0,8} &= 92H^3 &\nu_{3,1,11} &= 12960H^3
\end{array}
\]
\end{lemma}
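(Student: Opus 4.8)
The plan is to realise each $\nu_{d,\delta,n}$ as an explicit pushforward along the projection $\pi\colon B_d\coloneqq\PP(\sym^d\U^*)\to\Gr=\check{\PP}^3$ and then to evaluate it in the Chow ring $A^*(B_d)=\ZZ[H,\xi]/\big(H^4,\ \sum_{i=0}^{r}c_i(\sym^d\U^*)\,\xi^{\,r-i}\big)$, where $r=\binom{d+2}{2}$ is the rank of $\sym^d\U^*$, $H=c_1(\O_\Gr(1))$ and $\xi=c_1(\O_{B_d}(1))$. First I would record, exactly as in Lemma~\ref{linelocus}, that for a line $\ell\subset\PP^3$ the locus $B_{d,\ell}\subset B_d$ of degree-$d$ plane curves meeting $\ell$ is a divisor cut out by a section of $\pi^*\O_\Gr(d)\otimes\O_{B_d}(1)$, so that $[B_{d,\ell}]=dH+\xi$. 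Next, by a Kleiman-transversality argument of the type used in the proofs of Proposition~\ref{finred} and Lemma~\ref{decomp} --- using that $PGL_4$ acts transitively on lines in $\PP^3$ and on the universal Severi locus --- for general $\ell_1,\dots,\ell_n$ the cycle $\overline{W(d,\delta,n)}$ is the proper, generically transverse intersection of $\Sigma_{d,\delta}$ with $B_{d,\ell_1}\cap\cdots\cap B_{d,\ell_n}$, where $\Sigma_{d,\delta}\subset B_d$ denotes the closure of the universal locus of irreducible $\delta$-nodal degree-$d$ curves. Hence $\nu_{d,\delta,n}=\pi_*\big([\Sigma_{d,\delta}]\cdot(dH+\xi)^n\big)$.

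For $(d,\delta)=(1,0)$ and $(2,0)$ every plane curve (resp.\ every smooth plane conic) is irreducible, so $\Sigma_{d,0}=B_d$ and $\nu_{d,0,n}=\pi_*\big((dH+\xi)^n\big)$. For $(d,\delta)=(3,1)$ the class $[\Sigma_{3,1}]$ is the class of the universal discriminant hypersurface $\Delta_3\subset B_3$ of plane cubics, which I would compute by passing to the universal plane $\rho\colon\S_{B_3}=\S\times_\Gr B_3\to B_3$. There the universal cubic is a section of $\mathcal L\coloneqq\O_\S(3)\otimes\O_{B_3}(1)$, and the locus of fibrewise singular points of the cubic is the zero scheme of the induced section of the rank-$3$ bundle of relative $1$-jets, which fits in $0\to\Omega^1_{\S/\Gr}\otimes\mathcal L\to\mathcal J^1_{\S_{B_3}/B_3}(\mathcal L)\to\mathcal L\to 0$. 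Since $\O_{\PP^2}(3)$ separates $1$-jets this section is regular, and since a $1$-nodal cubic has a unique node the zero scheme maps birationally onto $\Delta_3$; hence $[\Delta_3]=\rho_*\big(c_3(\mathcal J^1_{\S_{B_3}/B_3}(\mathcal L))\big)=\rho_*\big(c_2(\Omega^1_{\S/\Gr}\otimes\mathcal L)\cdot c_1(\mathcal L)\big)$. Using the Chern classes of $T_{\S/\Gr}$ recorded in Corollary~\ref{polycurve} and $\rho_*\big(c_1(\O_\S(1))^{2+k}\big)=s_k(\U)=H^k$ for $k\le 1$, this evaluates to $[\Delta_3]=12\,\xi+12\,H$ --- the coefficient of $\xi$ being the classical degree $3(d-1)^2=12$ of the discriminant of plane cubics.

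It then remains to do the bookkeeping. From the tautological sequence~\eqref{taut} one has $c(\U^*)=1+H+H^2+H^3$, i.e.\ $\U^*$ has Chern roots $a,b,c$ with elementary symmetric functions $H,H^2,H^3$; the Chern roots of $\sym^d\U^*$ are the $ia+jb+kc$ with $i+j+k=d$, so $c(\sym^d\U^*)$ and hence the Segre class $s(\sym^d\U^*)=c(\sym^d\U^*)^{-1}$ are explicit modulo $H^4$ (for instance $c(\sym^2\U^*)=1+4H+10H^2+20H^3$, and $c_1(\sym^3\U^*)=10H$). With $\pi_*(\xi^{\,r-1+k})=s_k(\sym^d\U^*)$ one expands $\pi_*\big((dH+\xi)^n\big)$ and $\pi_*\big((12\xi+12H)(3H+\xi)^n\big)$, and a short computation produces the table. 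A few consistency checks: $\nu_{1,0,5}=0$ because $(H+\xi)^5=0$ in $A^*(B_1)$, reflecting that no line meets five general lines; $\pi_*\big((2H+\xi)^8\big)=92H^3$; and the leading coefficient $\nu_{3,1,8}=12$ is the number of rational (equivalently, $1$-nodal) plane cubics through eight general points.

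The genuine content is concentrated in two steps. The first is the transversality/enumerativity claim: one must verify that for general lines the intersection above carries no spurious excess components inside the boundary $\partial\Sigma_{d,\delta}$, nor --- in the case $(3,1)$ --- inside the locus of cubics with worse-than-nodal singularities; this is where one re-uses the incidence-variety dimension count from the proof of Proposition~\ref{finred}. The second is identifying the universal discriminant class $[\Delta_3]=12\xi+12H$: its fibrewise degree $12$ is classical, but getting the $H$-coefficient right requires the jet-bundle Chern-class computation above. Everything else reduces to routine manipulation of Chern classes of symmetric powers of a rank-$3$ bundle on $\PP^3$.
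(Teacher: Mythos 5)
Your proposal is correct and follows essentially the same route as the paper: the $\delta=0$ cases are computed as $\pi_*\big((dH+\xi)^n\big)$ via Lemma \ref{linelocus}, and the $\delta=1$ case uses the discriminant class $\rho_*\big(c_3\big((\Omega_{\S/\Gr}\oplus\O)\otimes\O_\S(3)\otimes\O_B(1)\big)\big)$, which is exactly your jet-bundle class $c_3(\mathcal{J}^1(\mathcal{L}))$ since the two bundles have the same total Chern class. Your explicit value $[\Delta_3]=12\xi+12H$ and the resulting table entries check out against the paper's Chow-ring computation.
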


\begin{proof}
Let $n:= \frac{d(d+3)}{2} + 3 - \delta$.
First note that for $d\geq \delta+2$, all $\delta$-nodal curves of degree $d$ are irreducible, so we have by Lemma \ref{keyex}, Proposition \ref{sup} and a slightly adapted version of Proposition \ref{finred} the identity
\[\nu(d,\delta,n-i) = \pi_*(\gamma(\C|_{B_{\ell_1,\ldots,\ell_{n-i}}}))\in A_i(\Gr)\,.\]
Hence we can compute the classes by the methods of the previous sections. In the case that $\delta = 0, 1$, however, the classes can be computed by elementary means.
Let $\delta = 0$. The locus of curves in $B = \PP(\sym^d(\U^*))$ intersecting a line, is cut out by a section of $\O_\Gr(d)\otimes \O_B(1)$. Note that a general such curve is smooth. It follows that we have the equation
\[\nu_{d,0,n-i} = \pi_*(c_1(\O_\Gr(d)\otimes \O_B(1))^{n-i})\,,\]
the right hand side of which can easily be calculated.

Now let $\delta = 1$. For a curve $C\subset \PP^2$, given by a degree $d$ polynomial $f$, the singular locus is given by the equations
\[df = \frac{\partial f}{\partial x_1} dx_1 + \frac{\partial f}{\partial x_2} dx_2 + \frac{\partial f}{\partial x_3} dx_3 = 0\,, \quad f = 0\,.\]
The rules $f \mapsto df$ and $f\mapsto f$ define homomorphisms
\begin{align*}
\O_B(-1)\rightarrow \Omega_{\S/\Gr}\otimes \O_\S(d)\quad\mbox{and}\quad
\O_B(-1)\rightarrow \O_\S(d)
\end{align*}
of bundles on $\S\times_\Gr B$, in which $\S = \PP(\U)\rightarrow \Gr$ is the universal $\PP^2$-bundle over the Grassmannian. The homomorphisms simultaneously vanish on the singular locus of the fibres of the universal curve $\C\rightarrow B$. As curves with one node lie dense in this locus, it follows that the class of the closure of the locus of 1-nodal curves in $B$ is given by
\[ \alpha = (pr_B)_*(c_3((\Omega_{\S/\Gr}\oplus \O_{\S\times_\Gr B})\otimes\O_\S(d)\otimes\O_B(1))\in A^1(B)\,.\]
We conclude that
\[\nu_{d,1,n-i}=\pi_*(c_1(\O_\Gr(d)\otimes \O_B(1))^{n-i}\cap \alpha)\,.\]
Again, by a straight-forward computation, we obtain the numbers in the third column.
\end{proof}

For $n= \frac{d(d+3)}{2} + 3 - \delta$, the number
\[\int \nu_{d,\delta,n-i} \,  H^i\]
has the following interpretation: it is the number of planar curves $C$ in $\PP^3$ of degree $d$, with $\delta$ nodes, intersecting general lines $\ell_1,\ldots,\ell_{n-i}\subset \PP^3$, such that the plane of the curve contains general points $P_1,\ldots,P_i\in \PP^3$. For certain cases, this enumerative problem has already been studied by Schubert using his calculus introduced in \cite{Schu}. E.g.\ he treats conics, planar and twisted cubics and planar quartic curves in $\PP^N$ that intersect points, lines and planes. The curves are allowed to have nodal singularities, or a cusp in the case of the planar cubic. The degrees of the classes in the second and third column of Lemma \ref{LeShuLoci} can be found in §20 and §24 of loc.\ cit.\ respectively. By Proposition \ref{componentcount} it follows that the numbers in Tables \ref{tabeleen} - \ref{tabeldrie} can computed by 19th century geometry and some elementary combinatorics.

\subsection*{Curves with only linear components}
\begin{itemize}
\item {\boldmath$\delta = 2, d = 2$}
\[N_{2,2} =\binom{7}{3,4}\times 2^2=140=N_2(2)\,.\]
\item {\boldmath$\delta = 3, d = 3$}
\[N_{3,3}=\binom{9}{4,3,2}\times 2^2+\frac{1}{3!}\times\binom{9}{3,3,3} \times 2^3=7280=N_3(3)\,.\]
\item {\boldmath$\delta = 6, d = 4$}
\[N_{6,4}=\frac{1}{2!}\times\binom{11}{4,3,2,2}\times 2^2 + \frac{1}{3!}\times\binom{11}{3,3,3,2}\times 2^3=261800=N_6(4)\,.\]
\end{itemize}

\subsection*{Curves with one conic component}
\begin{itemize}
\item {\boldmath$\delta = 0, d = 2$}
\[N_{0,2}=\nu_{2,0,8}=92=N_2(0)\,.\]
\item {\boldmath$\delta = 2, d = 3$}
\[N_{2,3}=\binom{10}{8}\times 92 + \binom{10}{7}\times 34\times 2+ \binom{10}{6}\times 8\times 2 = 15660=N_2(3)\,.\]
\item {\boldmath$\delta = 5, d = 4$}
\begin{multline*}
N_{5,4}=\frac{1}{2!}\times\binom{12}{8,2,2}\times 92 + \binom{12}{7,3,2}\times 34\times 2 + \binom{12}{6,4,2}\times 8\times 2\,+\\
\frac{1}{2!}\times\binom{12}{6,3,3}\times 8\times 2^2 +\binom{12}{5,4,3}\times 2^2 = 1303500=N_5(4)\,.
\end{multline*}
\end{itemize}

\subsection*{Curves with a nodal cubic or two conic components}
\begin{itemize}
\item {\boldmath$\delta = 4, d = 4$}
The contribution of curves with two conic components is
\[\binom{13}{8}\times 92+\binom{13}{7}\times 34 \times 8 = 585156\,.\]
The contribution of curves consisting of a nodal cubic and a line is
\[\binom{13}{11}\times 12960 + \binom{13}{10} \times 2040 \times 2 + \binom{13}{9}\times 216 \times 2 = 2486640\,.\]
It total we have:
\[N_{4,4}=585156+2486640=3071796=N_4(4)\,.\]
\item {\boldmath$\delta = 8, d = 5$}
Two conics and a line:
\begin{multline*}
\binom{15}{8,5,2} \times 92 + \binom{15}{7,6,2} \times 34 \times 8 + \binom{15}{7,5,3} \times 34 \times 2\,+\\
\frac{1}{2!} \times \binom{15}{6,6,3} \times 8 \times 8 \times 2 + \binom{15}{6,5,4} \times 8 \times 2 = 122942820\,.
\end{multline*}
A nodal cubic, and two lines:
\begin{multline*}
\frac{1}{2!} \times \binom{15}{11,2,2} \times 12960 + \binom{15}{10,3,2} \times 2040 \times 2 + \binom{15}{9,4,2} \times 216 \times 2\,+ \\
\frac{1}{2!} \times \binom{15}{9,3,3} \times 216 \times 2 \times 2 + \binom{15}{8,4,3} \times 12 \times 2 \times 2 = 262080000\,.
\end{multline*}
Total:
\[N_{8,5}=122942820+262080000=385022820=N_8(5)\,.\]
\end{itemize}

\newpage

\appendix
\section{Node polynomials for $\delta = 0,\ldots, 12$.}\label{Appendix A}
In order to keep the denominators under control, we will print the node polynomials for curves with \emph{ordered} nodes, i.e. $N^o_{\delta}=\delta!\,  N_{\delta}$.

{\raggedright\doublespacing\vbadness=10000
\setlength\parindent{-2.8em}

$N^o_{0} = \frac{1}{324}\, d\, (d-1)\, (d+2)\, (d+1)\, (d^{2}+4\, d+6)\, (2\, d^{3}+6\, d^{2}+13\, d+3)$

$N^o_{1} = \frac{1}{108}\, d\, (d+3)\, (d+2)\, (2\, d^{4}+4\, d^{3}+d^{2}-10\, d-6)\, (d-1)^{2}\, (d+1)^{2}$

$N^o_{2} = \frac{1}{108}\, d\, (d-1)\, (d-2)\, (d+2)\, (d+1)\, (6\, d^{8}+30\, d^{7}-25\, d^{6}-255\, d^{5}-142\, d^{4}+333\, d^{3}+629\, d^{2}+18\, d+198)$

$N^o_{3} = \frac{1}{108}\, d\, (d-1)\, (d-2)\, (18\, d^{12}+108\, d^{11}-315\, d^{10}-2664\, d^{9}+470\, d^{8}+21919\, d^{7}+19103\, d^{6}-58136\, d^{5}-106948\, d^{4}+7039\, d^{3}+129360\, d^{2}-165798\, d+110700)$

$N^o_{4} = \frac{1}{36}\, (d-1)\, (d-3)\, (18\, d^{15}+90\, d^{14}-747\, d^{13}-3843\, d^{12}+11660\, d^{11}+63140\, d^{10}-75352\, d^{9}-486678\, d^{8}+73143\, d^{7}+1773729\, d^{6}+1150606\, d^{5}-4123550\, d^{4}-3282032\, d^{3}+12893256\, d^{2}-11795040\, d+3404160)$

$N^o_{5} = \frac{1}{36}\, (d-1)\, (54\, d^{18}-4545\, d^{16}+1152\, d^{15}+159342\, d^{14}-67218\, d^{13}-2985967\, d^{12}+1450512\, d^{11}+32041927\, d^{10}-12936036\, d^{9}-198254910\, d^{8}+9946932\, d^{7}+752976733\, d^{6}+563804514\, d^{5}-2869526338\, d^{4}-1811459616\, d^{3}+11267964504\, d^{2}-12007211040\, d+4224182400)$

$N^o_{6} = \frac{1}{36}\, (162\, d^{21}-486\, d^{20}-17901\, d^{19}+56781\, d^{18}+836361\, d^{17}-2772558\, d^{16}-21438711\, d^{15}+73412631\, d^{14}+327808568\, d^{13}-1138677007\, d^{12}-3072121759\, d^{11}+10302259428\, d^{10}+18632510223\, d^{9}-50159288793\, d^{8}-99732049025\, d^{7}+130703793592\, d^{6}+629801216266\, d^{5}-777706339956\, d^{4}-2089991213304\, d^{3}+5446674186768\, d^{2}-4582360442880\, d+1359752313600)$

$N^o_{7} = \frac{1}{12}\, (162\, d^{23}-810\, d^{22}-22275\, d^{21}+117045\, d^{20}+1315044\, d^{19}-7305633\, d^{18}-43435062\, d^{17}+257593851\, d^{16}+875704283\, d^{15}-5620623440\, d^{14}-11055698265\, d^{13}+77840061643\, d^{12}+89179790228\, d^{11}-672462975543\, d^{10}-563743329044\, d^{9}+3506892852821\, d^{8}+4693983485919\, d^{7}-13574568995962\, d^{6}-37376320692374\, d^{5}+84863008074540\, d^{4}+101290677876264\, d^{3}-419002213496112\, d^{2}+415086981865920\, d-136551736742400)$

$N^o_{8} = \frac{1}{12}\, (486\, d^{25}-3402\, d^{24}-80271\, d^{23}+603369\, d^{22}+5736609\, d^{21}-47210985\, d^{20}-230681484\, d^{19}+2141947278\, d^{18}+5649412578\, d^{17}-62197110162\, d^{16}-84069436618\, d^{15}+1201119124190\, d^{14}+695539180710\, d^{13}-15500834280650\, d^{12}-2727660315107\, d^{11}+131722402261845\, d^{10}+25466213716945\, d^{9}-750756824927669\, d^{8}-664023356945796\, d^{7}+3782983980383618\, d^{6}+6489582893159132\, d^{5}-24182782626411432\, d^{4}-11635999979827824\, d^{3}+98923354020446400\, d^{2}-113846941521653760\, d+40910206904985600)$

$N^o_{9} = \frac{1}{12}\, (1458\, d^{27}-13122\, d^{26}-282123\, d^{25}+2810295\, d^{24}+23620086\, d^{23}-269654670\, d^{22}-1102117023\, d^{21}+15284004291\, d^{20}+30114876816\, d^{19}-567444295476\, d^{18}-422910483264\, d^{17}+14442428462976\, d^{16}-173655449080\, d^{15}-256024966449048\, d^{14}+117007607498013\, d^{13}+3154205513887891\, d^{12}-1917888357827630\, d^{11}-26939284058835262\, d^{10}+9381328237342969\, d^{9}+170575538835999315\, d^{8}+81091482477623574\, d^{7}-1043320220595663742\, d^{6}-1048469186302651972\, d^{5}+6715930311282223672\, d^{4}+37331737163479536\, d^{3}-24266279644066088640\, d^{2}+32202247356878376960\, d-12516744443551488000)$

$N^o_{10} = \frac{1}{4}\, (1458\, d^{29}-16038\, d^{28}-324405\, d^{27}+4083129\, d^{26}+30991005\, d^{25}-471257676\, d^{24}-1603277307\, d^{23}+32578597143\, d^{22}+43377665589\, d^{21}-1500353595792\, d^{20}-174393237924\, d^{19}+48387112634196\, d^{18}-31729963605856\, d^{17}-1117426679453368\, d^{16}+1278946167008861\, d^{15}+18584197566356041\, d^{14}-25929838100941987\, d^{13}-222292527680013236\, d^{12}+301840036425933217\, d^{11}+1945082392976187963\, d^{10}-1702396861961183657\, d^{9}-13840054897129551538\, d^{8}-869477353690603586\, d^{7}+97969270160191718168\, d^{6}+43400439640826602848\, d^{5}-629779784557096225952\, d^{4}+234491922967527070944\, d^{3}+2074544035031110584960\, d^{2}-3163830549287964595200\, d+1320649590395681280000)$

$N^o_{11} = \frac{1}{4}\, (4374\, d^{31}-56862\, d^{30}-1102977\, d^{29}+16973307\, d^{28}+117608112\, d^{27}-2318176989\, d^{26}-6425985798\, d^{25}+191682452511\, d^{24}+133119473328\, d^{23}-10693744028253\, d^{22}+5846987132217\, d^{21}+424349459112114\, d^{20}-577264977532776\, d^{19}-12296681379527388\, d^{18}+23489692414637363\, d^{17}+263030758384442289\, d^{16}-593247075529299782\, d^{15}-4162564652290610993\, d^{14}+9903092880496934734\, d^{13}+49018479445283034499\, d^{12}-106385448246367215702\, d^{11}-447930561908076256091\, d^{10}+645540608889693443477\, d^{9}+3606790056461753863832\, d^{8}-1341780384161439521626\, d^{7}-28540272186090313415704\, d^{6}+1205795435057498651584\, d^{5}+182406168443172371488448\, d^{4}-128952276571759318016928\, d^{3}-557203918390573072878720\, d^{2}+976485597554969367782400\, d-435294406202292274176000)$

$N^o_{12} = \frac{1}{4}\, (13122\, d^{33}-196830\, d^{32}-3706965\, d^{31}+68070375\, d^{30}+432815319\, d^{29}-10850751657\, d^{28}-23535697932\, d^{27}+1055997538326\, d^{26}+50357440881\, d^{25}-70021319228739\, d^{24}+87374064448161\, d^{23}+3341431959709527\, d^{22}-7111742962317408\, d^{21}-118120713345379188\, d^{20}+326384557105326777\, d^{19}+3137002724874226941\, d^{18}-10106444734270420903\, d^{17}-62918353809936417707\, d^{16}+220730277300344083152\, d^{15}+956623940326083332050\, d^{14}-3393890378620526954445\, d^{13}-11241179417671261959041\, d^{12}+35363022169384877426927\, d^{11}+109549515125017919639429\, d^{10}-229128080595756761453742\, d^{9}-1002037783274877109543198\, d^{8}+840677011541967726001824\, d^{7}+8641146394045844077954112\, d^{6}-4223694280033586640137824\, d^{5}-54779602892548858064166240\, d^{4}+56217660837944500164819456\, d^{3}+156589791424366871478896640\, d^{2}-316225057234071161731737600\, d+149867365795069610096640000)$

}

\section{A node polynomial for curves in $\PP^2$}\label{Appendix B}
The number of $15$-nodal curves degree $d\geq 9$ (by \cite{KS}) in $\PP^2$ containing $\frac{d(d+1)}{2}-15$ points in general position is given by the following polynomial.

{\raggedright\doublespacing\vbadness=10000

$\frac{1}{15!} (14348907\,{d}^{30}-430467210\,{d}^{29}-789189885\,{d}^{28}+
144134770815\,{d}^{27}-800302316310\,{d}^{26}-21505566260997\,{d}^{25}
+206046709321635\,{d}^{24}+1830389081571180\,{d}^{23}-
25973085837797631\,{d}^{22}-90805122781323093\,{d}^{21}+
2106764580151475244\,{d}^{20}+1842311595032520885\,{d}^{19}-
120731061785804511795\,{d}^{18}+83105496803044790514\,{d}^{17}+
5106565375968131056197\,{d}^{16}-8800802481614659877511\,{d}^{15}-
162890506083253675564674\,{d}^{14}+397425775424906515333221\,{d}^{13}+
3952008654242554161166365\,{d}^{12}-11546375323786656779457252\,{d}^{
11}-72858625897371563437077825\,{d}^{10}+232182939704411137229570133\,
{d}^{9}+1010825449711157998476650988\,{d}^{8}-
3241105115881805786551102893\,{d}^{7}-10336040203392280930456480032\,{
d}^{6}+30163840992557581783875044832\,{d}^{5}+
74721661229894928962601063456\,{d}^{4}-168817217722446315040796818224
\,{d}^{3}-347671495806428829919633280640\,{d}^{2}+
429634898369604339129576633600\,d+794015010296634348660582144000)$

}

\bibliographystyle{mijnamsalpha}

\bibliography{bib}{}

\providecommand{\bysame}{\leavevmode\hbox to3em{\hrulefill}\thinspace}
\providecommand{\MR}{\relax\ifhmode\unskip\space\fi MR }
\providecommand{\MRhref}[2]{%
  \href{http://www.ams.org/mathscinet-getitem?mr=#1}{#2}
}
\providecommand{\href}[2]{#2}
\begin{thebibliography}{EGL01}

\bibitem[AIK77]{AIK}
A.~Altman, A.~Iarrobino, and S.~Kleiman, \emph{Irreducibility of the
  compactified {J}acobian}, 1--12. \MR{0498546}

\bibitem[Alu06]{Al}
P.~Aluffi, \emph{Limits of {C}how groups, and a new construction of
  {C}hern-{S}chwartz-{M}ac{P}herson classes}, Pure Appl. Math. Q. \textbf{2}
  (2006), no.~4, Special Issue: In honor of Robert D. MacPherson. Part 2,
  915--941. \MR{2282409}

\bibitem[Blo12]{Bl}
F.~Block, \emph{Relative node polynomials for plane curves}, J. Algebraic
  Combin. \textbf{36} (2012), no.~2, 279--308. \MR{2946469}

\bibitem[BM15]{BM2}
S.~Basu and R.~Mukherjee, \emph{Enumeration of curves with two singular
  points}, Bull. Sci. Math. \textbf{139} (2015), no.~6, 667--735. \MR{3395876}

\bibitem[BM16]{BM1}
\bysame, \emph{Enumeration of curves with one singular point}, J. Geom. Phys.
  \textbf{104} (2016), 175--203. \MR{3483831}

\bibitem[BS91]{BS}
M.~Beltrametti and A.~Sommese, \emph{Zero cycles and {$k$}th order embeddings
  of smooth projective surfaces}, Problems in the theory of surfaces and their
  classification ({C}ortona, 1988), Sympos. Math., XXXII, Academic Press,
  London, 1991, With an appendix by Lothar G\"ottsche, pp.~33--48. \MR{1273371}

\bibitem[EG98]{EG}
D.~Edidin and W.~Graham, \emph{Localization in equivariant intersection theory
  and the {B}ott residue formula}, Amer. J. Math. \textbf{120} (1998), no.~3,
  619--636. \MR{1623412}

\bibitem[EGL01]{EGL}
G.~Ellingsrud, L.~G\"ottsche, and M.~Lehn, \emph{On the cobordism class of the
  {H}ilbert scheme of a surface}, J. Algebraic Geom. \textbf{10} (2001), no.~1,
  81--100. \MR{1795551}

\bibitem[ES96]{ES}
G.~Ellingsrud and S.~Str{\o}mme, \emph{Bott's formula and enumerative
  geometry}, J. Amer. Math. Soc. \textbf{9} (1996), no.~1, 175--193.
  \MR{1317230}

\bibitem[Ful98]{Fu}
W.~Fulton, \emph{Intersection theory}, second ed., Ergebnisse der Mathematik
  und ihrer Grenzgebiete. 3. Folge. A Series of Modern Surveys in Mathematics
  [Results in Mathematics and Related Areas. 3rd Series. A Series of Modern
  Surveys in Mathematics], vol.~2, Springer-Verlag, Berlin, 1998. \MR{1644323}

\bibitem[G{\"o}t98]{Go}
L.~G{\"o}ttsche, \emph{A conjectural generating function for numbers of curves
  on surfaces}, Comm. Math. Phys. \textbf{196} (1998), no.~3, 523--533.
  \MR{1645204}

\bibitem[Gro66]{EGA43}
A.~Grothendieck, \emph{{\'E}l\'ements de g\'eom\'etrie alg\'ebrique. {IV}.
  {{\'E}}tude locale des sch\'emas et des morphismes de sch\'emas. {III}},
  Inst. Hautes \'Etudes Sci. Publ. Math. (1966), no.~28, 255. \MR{0217086}

\bibitem[GS14]{GS14}
L.~G\"ottsche and V.~Shende, \emph{Refined curve counting on complex surfaces},
  Geom. Topol. \textbf{18} (2014), no.~4, 2245--2307. \MR{3268777}

\bibitem[Iar72]{Ia}
A.~Iarrobino, \emph{Punctual {H}ilbert schemes}, Bull. Amer. Math. Soc.
  \textbf{78} (1972), 819--823. \MR{0308120}

\bibitem[Ive72]{Iv}
B.~Iversen, \emph{A fixed point formula for action of tori on algebraic
  varieties}, Invent. Math. \textbf{16} (1972), 229--236. \MR{0299608}

\bibitem[Kaz03]{Ka}
M.~Kazaryan, \emph{Multisingularities, cobordisms, and enumerative geometry},
  Uspekhi Mat. Nauk \textbf{58} (2003), no.~4(352), 29--88. \MR{2042262}

\bibitem[KP]{KP3}
S.~Kleiman and R.~Piene, \emph{Node polynomials for curves on surfaces}, to
  appear.

\bibitem[KP99]{KP1}
\bysame, \emph{Enumerating singular curves on surfaces}, Algebraic geometry:
  {H}irzebruch 70 ({W}arsaw, 1998), Contemp. Math., vol. 241, Amer. Math. Soc.,
  Providence, RI, 1999, pp.~209--238. \MR{1718146}

\bibitem[KP04]{KP2}
\bysame, \emph{Node polynomials for families: methods and applications}, Math.
  Nachr. \textbf{271} (2004), 69--90. \MR{2068884}

\bibitem[KS13]{KS}
S.~Kleiman and V.~Shende, \emph{On the {G}\"ottsche threshold}, A celebration
  of algebraic geometry, Clay Math. Proc., vol.~18, Amer. Math. Soc.,
  Providence, RI, 2013, With an appendix by Ilya Tyomkin, pp.~429--449.
  \MR{3114950}

\bibitem[KST11]{KST}
M.~Kool, V.~Shende, and R.~Thomas, \emph{A short proof of the {G}\"ottsche
  conjecture}, Geom. Topol. \textbf{15} (2011), no.~1, 397--406. \MR{2776848}

\bibitem[KT14]{KT2}
M.~Kool and R.~Thomas, \emph{Reduced classes and curve counting on surfaces
  {II}: calculations}, Algebr. Geom. \textbf{1} (2014), no.~3, 384--399.
  \MR{3238155}

\bibitem[Liu00]{Liu}
A.-K. Liu, \emph{Family blowup formula, admissible graphs and the enumeration
  of singular curves. {I}}, J. Differential Geom. \textbf{56} (2000), no.~3,
  381--579. \MR{1868322}

\bibitem[LT14]{LT}
J.~Li and Y.-j. Tzeng, \emph{Universal polynomials for singular curves on
  surfaces}, Compos. Math. \textbf{150} (2014), no.~7, 1169--1182. \MR{3230849}

\bibitem[Mac74]{Ma}
R.~MacPherson, \emph{Chern classes for singular algebraic varieties}, Ann. of
  Math. (2) \textbf{100} (1974), 423--432. \MR{0361141}

\bibitem[PT10]{PT}
R.~Pandharipande and R.~Thomas, \emph{Stable pairs and {BPS} invariants}, J.
  Amer. Math. Soc. \textbf{23} (2010), no.~1, 267--297. \MR{2552254}

\bibitem[Ren17]{Re}
J.~Rennemo, \emph{Universal polynomials for tautological integrals on {H}ilbert
  schemes}, Geom. Topol. \textbf{21} (2017), no.~1, 253--314. \MR{3608714}

\bibitem[Sch79]{Schu}
H.~Schubert, \emph{Kalk\"ul der abz\"ahlenden {G}eometrie}, Springer-Verlag,
  Berlin-New York, 1979, Reprint of the 1879 original, With an introduction by
  Steven L. Kleiman. \MR{555576}

\bibitem[Sev68]{Se}
F.~Severi, \emph{Vorlesungen \"uber algebraische {G}eometrie: {G}eometrie auf
  einer {K}urve, {R}iemannsche {F}l\"achen, {A}belsche {I}ntegrale},
  Berechtigte Deutsche \"Ubersetzung von Eugen L\"offler. Mit einem
  Einf\"uhrungswort von A. Brill. Begleitwort zum Neudruck von Beniamino Segre.
  Bibliotheca Mathematica Teubneriana, Band 32, Johnson Reprint Corp., New
  York-London, 1968. \MR{0245574}

\bibitem[She12]{Shende2012}
V.~Shende, \emph{Hilbert schemes of points on a locally planar curve and the
  {S}everi strata of its versal deformation}, Compos. Math. \textbf{148}
  (2012), no.~2, 531--547. \MR{2904196}

\bibitem[Tze12]{Tz}
Y.-j. Tzeng, \emph{A proof of the {G}\"ottsche-{Y}au-{Z}aslow formula}, J.
  Differential Geom. \textbf{90} (2012), no.~3, 439--472. \MR{2916043}

\bibitem[Zin13]{Zi}
A.~Zinger, \emph{Notes on enumerative geometry},
  \url{https://www.math.stonybrook.edu/~azinger/mat620/EGnotes.pdf}, September
  2013.

\bibitem[Zin17]{Zi2017}
\bysame, \emph{Counting plane rational curves: Old and new approaches},
  \url{http://www.math.stonybrook.edu/~azinger/research/enumgeom.pdf}, April
  2017.

\end{thebibliography}

\end{document}